\theoremstyle{plain}
\newtheorem{theorem}{Theorem}[section]
\newtheorem{prop}[theorem]{Proposition}
\newtheorem{lemma}[theorem]{Lemma}
\newtheorem{coro}[theorem]{Corollary}
\newtheorem{fact}[theorem]{Fact}
\theoremstyle{definition}
\newtheorem{remark}[theorem]{Remark}
\newcommand{\dd}{\,\mathrm{d}}
\newcommand{\ts}{\hspace{0.5pt}}
\newcommand{\nts}{\hspace{-0.5pt}}
\newcommand{\pa}{\phantom{a}}
\newcommand{\NN}{\mathbb{N}}
\newcommand{\RR}{\mathbb{R}}
\newcommand{\ZZ}{\mathbb{Z}}
\newcommand{\XX}{\mathbb{X}}
\newcommand{\TT}{\mathbb{T}}
\newcommand{\cB}{\mathcal{B}}
\newcommand{\cF}{\mathcal{F}}
\newcommand{\cG}{\mathcal{G}}
\newcommand{\cM}{\mathcal{M}}
\newcommand{\cP}{\mathcal{P}}
\newcommand{\ee}{\mathrm{e}}
\DeclareMathOperator{\card}{card}
\DeclareMathOperator{\locdim}{dim}
\newcommand{\exend}{\hfill $\Diamond$}
\newcommand{\myfrac}[2]{\frac{\raisebox{-2pt}{$#1$}}
      {\raisebox{0.5pt}{$#2$}}}
\newcommand{\birk}{b} 
\newcommand{\dims}{f}
\begin{document}

\title{Scaling properties of the Thue{\ts}--Morse measure}

\author{Michael Baake, Philipp Gohlke}
\address{Fakult\"{a}t f\"{u}r Mathematik, Universit\"{a}t Bielefeld,\newline
\hspace*{\parindent}Postfach 100131, 33501 Bielefeld, Germany}
\email{$\{$mbaake,pgohlke$\}$@math.uni-bielefeld.de}

\author{Marc Kesseb\"{o}hmer}
\address{Fachbereich Mathematik, Universit\"{a}t Bremen, \newline 
\hspace*{\parindent}Postfach 330440, 28359 Bremen, Germany}
\email{mhk@math.uni-bremen.de}

\author{Tanja Schindler}
\address{Research School of Finance, Actuarial Studies
      and Statistics,\newline
\hspace*{\parindent}Australian National University, \newline 
\hspace*{\parindent}26C Kingsley St, Acton ACT 2601, Australia }
\email{tanja.schindler@anu.edu.au}

\begin{abstract}
  The classic Thue{\ts}--Morse measure is a paradigmatic example of a
  purely singular continuous probability measure on the unit
  interval. Since it has a representation as an infinite Riesz
  product, many aspects of this measure have been studied in the past,
  including various scaling properties and a partly heuristic
  multifractal analysis. Some of the difficulties emerge from the
  appearance of an unbounded potential in the thermodynamic formalism.
  It is the purpose of this article to review and prove some of the
  observations that were previously established via numerical or
  scaling arguments.
\end{abstract}

\keywords{Thue{\ts}--Morse sequence, spectral measure, Riesz product,
  multifractal analysis, thermodynamic formalism}

\subjclass[2010]{37D35, 37C45, 52C23}

\maketitle
\thispagestyle{empty}

\section{Introduction}

As is well known, see \cite[Sec.~10.1]{TAO} and references therein,
the Thue{\ts}--Morse diffraction measure for the balanced-weight case
is given by the infinite Riesz product
\[
  \mu^{\pa}_{\mathrm{TM}} \, =  \prod_{\ell = 0}^{\infty}
    \bigl(1 - \cos(2 \pi 2^{\ell} k)\bigr) ,
\]
where convergence is understood in the vague topology. As such,
$\mu^{\pa}_{\mathrm{TM}}$ is a translation-bounded, positive measure
on $\RR$ that is purely singular continuous and $1$-periodic.  Clearly,
\[
   \mu^{\pa}_{\mathrm{TM}} \, = \, \nu \ast \delta^{\pa}_{\ZZ} \ts ,
\]
with $\nu = \mu^{\pa}_{\mathrm{TM}} \rvert_{[0,1)}$ being a
probability measure on $\TT = \RR/\ZZ$, the latter represented by
$[0,1)$ with addition modulo $1$. More precisely, $\nu$ is the weak
limit of probability measures $\nu^{\pa}_{\nts N}$ with Radon--Nikodym
densities
\begin{equation}\label{eq:TM-def}
  \frac{\dd \nu^{\pa}_{\nts N}}{\dd \lambda}(k) \, =
  \prod_{\ell = 0}^{N-1} \bigl(1 - \cos(2\pi 2^{\ell} k)\bigr)  ,
\end{equation}
relative to Lebesgue measure $\lambda$.  In this setting,
$\nu$ is a natural choice for
the maximal spectral measure in the orthocomplement of the pure point
sector of the Thue{\ts}--Morse dynamical system \cite{Q}.  Since $\nu$
is a continuous measure, it is often advantageous to simultaneously
consider $\nu$ as a measure on $\TT$ and on $[0,1]$, as we shall see
later several times.  It is the aim of this paper to obtain
information about the local structure of the Thue{\ts}--Morse measure
from multifractal analysis and its relation to thermodynamic
formalism. As there are many singular continuous measures
with similar properties that are of interest in number theory, see
\cite{BC} for a recent example, we hope that this approach will prove
useful there as well, as it did in \cite{KS}.

Let $\varrho^{\pa}_{\mathrm{E}}$ denote the Euclidean metric on $\TT$,
$B_{\mathrm{E}}(x,r)$ the closed ball around $x \in \TT$ with
Euclidean radius $r$, and let $B_{2}(x,r)$ denote the closed ball
around $x \in \TT$ with radius $r$ with respect to the shift space
metric $\varrho^{\pa}_2$, where $\varrho^{\pa}_2(x,y) := 2^{-k}$ when
$k \in \NN_0$ is the largest integer such that $x_i = y_i$ for the dyadic
digits of $x$ and $y$, for all
$i \leqslant k$.  We note here that this metric is not well defined
for the dyadic points $x \in \TT$; see our discussion in
Section~\ref{subsec:norms}.  However, as the dyadic points form only a
countable subset of $\TT$, we do not consider those points any
further. A discussion about the differences between the two metrics
will also be given in Section~\ref{subsec:norms}.

One way to quantify how concentrated the measure $\nu$ is at a given
point $x \in \TT$ is to determine its \emph{local dimension}, given by
\[
  \dim_{\nu, \tau}(x) \, =
  \lim_{r \to 0} \frac{\log \nu(B_{\tau}(x,r))}{\log(r)}
  \qquad \text{with } \, \tau \in \{ \mathrm{E}, 2 \} \ts ,
\]  
provided that the limit exists.  Such a limit then also applies to the
local dimension of $\mu^{\pa}_{\mathrm{TM}}$ at any $x + n$ with
$n \in \ZZ$.  Due to their highly irregular structure, we cannot hope
to pin down the level sets of $\dim_{\nu, \tau}$ explicitly. However, the
corresponding \emph{Hausdorff dimension} with respect to the metric
$\varrho^{\pa}_{\tau}$,
\[
  \dims^{\pa}_{\tau} (\alpha) \, = \,
  \dim^{\pa}_{ \mathrm{H},\tau} \{ x \in \TT : \dim_{\nu , \tau}(x)
  = \alpha \} \ts ,
\]
yields a properly behaved function of $\alpha$.  The analysis of
$\dims^{\pa}_{\tau} (\alpha)$ is one of the open questions considered
in \cite{Q1}.  The problem to determine the local dimension at a given
point $x \in \TT$ turns out to be intimately related to pointwise
scaling properties of the approximants in Eq.~\eqref{eq:TM-def}. More
precisely, we consider
\[
  \beta(x) \, :=  \lim_{n \to \infty}
  \frac{1}{n \log(2)} \, \log \prod_{\ell = 0}^{n-1}
  \bigl( 1 - \cos(2^{\ell+1} \pi x) \bigr) ,
\]
for all $x \in \TT$ for which the limit exists. Since
$\prod_{\ell = 0}^{n-1} \bigl( 1 - \cos(2^{\ell+1} \pi x) \bigr)$ is
related to the diffraction measure resulting from the first $2^n$
letters of the Thue{\ts}--Morse chain, we obtain some physical
interpretation of the quantity $\beta(x)$. By standard results,
compare \cite{BGN} and references therein, it is known that the
scaling exponent $\beta(x)$ exists and equals $-1$ for Lebesgue-a.e.\
$x \in \TT$. For some particular examples of non-typical points, see
\cite{CSM,BGN}.

There is a natural way to interpret $\beta$ in terms of the Birkhoff
average of some function
$\psi \colon \TT \xrightarrow{\quad} [-\infty, \log(2)]$,
\begin{equation}\label{eq:psi-def-1}
  \psi(x) \,=\, \log \bigl( 1 - \cos(2 \pi x) \bigr),
  \quad \beta(x) \,= \lim_{n \rightarrow \infty}
  \frac{ \psi_n(x)}{n \log(2)} \ts ,
\end{equation}
where $\psi^{\pa}_n(x) = \sum_{\ell = 0}^{n-1} \psi(2^\ell
x)$. Considered as a Borel probability measure on the dynamical system
$(\TT,T)$ with $T(x) = 2x$ (mod $1$), $\nu$ is an \emph{equilibrium
  measure} for the thermodynamic potential $\psi$ --- in fact, it is a
$g$-measure in the sense of Keane \cite{keane71}; see the explanation
in Section~\ref{Sec:Eq-measure}. In analogy to known results for
H\"{o}lder continuous potentials \cite[Cor.~1]{PesinWeiss}, we expect
to find some simple relation between $\dims^{\pa}_{\tau} (\alpha)$ and
the \emph{Birkhoff spectrum}
\begin{equation}\label{eq:Birk-spec-and-level-sets}
\begin{split}  
  \birk_{\tau} (\alpha) \, & = \, \dim_{ \mathrm{H},\tau} \cB(\alpha)
  \qquad \text{for } \tau \in \{ \mathrm{E}, 2 \} \ts ,  \quad
   \text{with} \\[1mm]
  \cB(\alpha) \, &  = \, \Bigl\{ x \in \TT : \lim_{n \to \infty}
  \frac{\psi_n(x)}{n} = \alpha \Bigr\} \, = \,
  \Bigl\{x \in \TT : \beta(x) =s
  \frac{\alpha}{\log(2)} \Bigr\} ,
\end{split}  
\end{equation}
where $\tau$ indicates the metric type.  It is one of the strengths of
the thermodynamic formalism to connect such locally defined functions
to the Legendre transform of a globally defined quantity. An adequate
choice for the latter in our situation is the \emph{topological
  pressure} of the function $t\ts \psi$, $t\in \RR$, defined by
\begin{equation} \label{eq:def-pressure}
  p(t) \,  := \, \cP (t\ts \psi)
  \, :=  \lim_{n \to \infty} \myfrac{1}{n} \log
  \sum_{J\in I_n} \sup_{x \in J} \, \exp \bigl(t\ts \psi^{\pa}_n(x)\bigr) ,
\end{equation}
where, for each $n\in \NN$, $I_n$ forms a partition of $[0,1]$
into intervals of length $2^{-n}$.

With this at hand, we can state our main result as follows, where
$p^{*}$ denotes the Legendre transform of $p$.

\begin{theorem}\label{thm:MAIN-MF-1}
  The Birkhoff spectrum of the function\/ $\psi$ from
  \eqref{eq:psi-def-1} is given by
\begin{equation}\label{eq:Birkhoff-spectrum}
  \birk_{\tau} (\alpha ) \, = \, \birk \ts (\alpha) \, = \,
  \max \Bigl\{ \frac{-p^{*}\ts (\alpha)}{\log(2)}, 0 \Bigr\} ,
\end{equation}
for\/ $\tau \in \{\mathrm{E},2\}$.  The function\/ $\birk$ is concave
on\/ $(-\infty,\log(3/2)]$, constantly equal to\/ $1$ for all\/
$\alpha \leqslant - \log(2)$, strictly less than\/ $1$ for
$\alpha > - \log(2)$, and equal to\/ $0$ for\/
$\alpha\geqslant \log(3/2)$; see
Figure~\textnormal{\ref{fig:The-graph-of-f(alpha)}} for the graph of
the spectrum. Moreover, the level sets\/ $\cB (\alpha)$ are empty
for\/ $\alpha > \log(3/2)$.

Finally, the dimension spectrum of the
measure\/ $\nu$ is related to the Birkhoff spectrum by
\begin{equation}\label{eq:dimension-spectrum}
  \dims^{\pa}_{\tau}(\alpha) \, = \,  \dims (\alpha) 
  \, = \, \birk \bigl(\log(2) (1 - \alpha)\bigr) .
\end{equation}
\end{theorem}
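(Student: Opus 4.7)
The plan is to attack the theorem in four steps, corresponding to the four main claims. The first step is to reduce \eqref{eq:dimension-spectrum} to \eqref{eq:Birkhoff-spectrum} via the $g$-measure structure of $\nu$. As discussed in Section~\ref{Sec:Eq-measure}, the function $g(y) := \exp(\psi(y)-\log(2)) = (1-\cos(2\pi y))/2$ is a valid $g$-function for the doubling map, and $\nu$ is its associated $g$-measure. The corresponding distortion estimate $\log\nu(B^{\pa}_{2}(x,2^{-n})) = \psi^{\pa}_n(x) - n\log(2) + o(n)$, uniform in $x$ away from the dyadic preimages of $0$, yields $\dim^{\pa}_{\nu,2}(x) = 1 - \lim_n \psi^{\pa}_n(x)/(n\log(2))$ whenever the limit exists, which is precisely \eqref{eq:dimension-spectrum} once \eqref{eq:Birkhoff-spectrum} is established. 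The Euclidean variant follows because every Euclidean ball of radius $r\in[2^{-n-1},2^{-n}]$ meets at most three dyadic cylinders of level $n$, so the Hausdorff dimensions in the two metrics agree on every level set $\cB(\alpha)$; it therefore suffices to handle $\tau=2$.

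For the upper bound $\birk(\alpha)\leqslant -p^{*}(\alpha)/\log(2)$, I employ the moment method. Fix $t\in\RR$ with $p(t)<\infty$ and $\epsilon>0$: every $x\in\cB(\alpha)$ satisfies $t\ts\psi^{\pa}_n(x)\geqslant t n\alpha - |t|\epsilon n$ for all large $n$, so the number of dyadic intervals $J\in I_n$ hitting $\cB(\alpha)$ is bounded by $\ee^{|t|\epsilon n - t n\alpha}\sum_{J\in I_n}\sup_{x\in J}\exp(t\ts\psi^{\pa}_n(x))$. Inserting this into the covering definition of Hausdorff dimension and invoking \eqref{eq:def-pressure} yields $\birk(\alpha)\leqslant (p(t)-t\alpha+|t|\epsilon)/\log(2)$; letting $\epsilon\to 0$ and optimising over $t$ gives the claimed bound, and the $\max$ with $0$ is tautological.

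The matching lower bound is the central obstacle, because $\psi$ is unbounded below and the classical Ruelle--Perron--Frobenius machinery for H\"older potentials does not directly apply. The plan is a truncation argument: for each $M>0$, set $\psi^{(M)}:=\max(\psi,-M)$, construct an equilibrium state $\mu^{(M)}_t$ for the bounded potential $t\ts\psi^{(M)}$ by the standard thermodynamic formalism, and extract a weak limit $\mu_t$ as $M\to\infty$, using the logarithmic blow-up of $\psi$ near the origin to dominate the tails. For each $t$ in the interior of the analyticity interval of $p$, the limiting measure is ergodic and satisfies $\int\psi\dd\mu_t = p'(t) =: \alpha(t)$; Birkhoff's theorem places $\mu_t$ on $\cB(\alpha(t))$, and a volume lemma of Billingsley type gives $\dim^{\pa}_{\mu_t,2}(x) = h(\mu_t)/\log(2) = (p(t)-t\ts\alpha(t))/\log(2) = -p^{*}(\alpha(t))/\log(2)$ for $\mu_t$-a.e.\ $x$. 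The mass-distribution principle then promotes this to the desired lower bound, and the range of $\alpha(t)$ sweeps the interval $(-\log(2),\log(3/2))$ as $t$ varies in $(0,\infty)$.

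For the structural properties: because $\psi$ is unbounded below, the sum in \eqref{eq:def-pressure} diverges for $t<0$, so $p$ is finite only on $[0,\infty)$ and $p^{*}$ is effectively a supremum over $t\geqslant 0$. At $\alpha = p'(0) = \int\psi\dd\lambda = -\log(2)$, the sup is attained at $t=0$ with value $-p(0)=-\log(2)$, yielding $\birk(-\log(2))=1$; the same critical point keeps $p^{*}(\alpha)=-\log(2)$ for all $\alpha\leqslant -\log(2)$, which produces the plateau. Concavity of $\birk$ on $(-\infty,\log(3/2)]$ is then the convexity of $p^{*}$ on its effective domain. For the right endpoint, the period-$2$ orbit $\{1/3,2/3\}$ realises $\psi(1/3)=\psi(2/3)=\log(3/2)$, and a short variational argument shows that this is the maximal invariant Birkhoff average: any trajectory that spends time near $1/2$, where $\psi$ attains its global maximum $\log(2)$, must visit a neighbourhood of $0$ on the next iterate, where $\psi$ is very negative. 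Hence $\cB(\alpha)=\emptyset$ for $\alpha>\log(3/2)$, and $\birk(\log(3/2))=0$ because only countably many orbits realise this maximum.
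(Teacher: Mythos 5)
Your upper bound is essentially the paper's argument (a moment/covering estimate over dyadic intervals, optimised over $t\geqslant 0$), and your reduction of the two metrics to one another is in the right spirit. But three of your steps rest on claims that are either false as stated or are precisely the hard content of the theorem. First, the asserted distortion estimate $\log\nu\bigl(B^{\pa}_2(x,2^{-n})\bigr)=\psi^{\pa}_n(x)-n\log(2)+o(n)$, ``uniform in $x$ away from the dyadic preimages of $0$'', does not hold: because $\psi$ has a singularity at $0$, the oscillation of $\psi^{\pa}_n$ on a level-$n$ cylinder is infinite, so no Gibbs-type two-sided bound is available on all of $\XX$. Only the one-sided bound $\nu(C_n(x))\leqslant 2^{-n}\sup_{C_n(x)}\exp(\psi^{\pa}_n)$ holds globally; the matching lower bound requires restricting to points whose orbits stay a definite distance from $0$, i.e.\ to the subshifts $\XX_m$ (this is the content of Lemmas~\ref{Lemma:SFT-sup-inf-bounds}--\ref{Lemma:bounds-nu} and Corollary~\ref{Coro:SFT-local-dim-birkhoff-relation}). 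Your derivation of \eqref{eq:dimension-spectrum} therefore needs to be split: the global upper bound for $\nu(C_n(x))$ gives one inclusion everywhere, while the reverse inclusion can only be used on sets contained in some $\XX_m$, which is exactly enough because the lower bound for the dimension is produced there anyway.

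Second, your lower bound via the truncations $\psi^{(M)}=\max(\psi,-M)$ is only a programme. To run it you must show (i) that the pressures of $t\ts\psi^{(M)}$ converge to $p(t)$ (the analogue of the exhaustion principle, Proposition~\ref{prop:Convergence-of-Pressure-1}, which occupies all of Section~\ref{Sec:Restricted-pressure}); (ii) that the weak limit $\mu_t$ is an equilibrium state for the unbounded potential $t\ts\psi$ with $\int\psi\dd\mu_t$ finite and equal to a subdifferential point of $p$; and (iii) that $p$ is differentiable with $p^{\ts\prime}$ sweeping $(-\log(2),\log(3/2))$ --- a property that is nowhere established and that the paper deliberately avoids by instead approximating $p^{*}$ by the transforms $p^{*}_{m}$ of the restricted pressures and invoking the known multifractal formalism on the SFTs $\XX_m$. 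Third, the identification of $\log(3/2)$ as the maximal Birkhoff exponent (needed both for the emptiness of $\cB(\alpha)$ for $\alpha>\log(3/2)$ and for the asymptote $p(t)-t\log(3/2)\to 0$ that forces $\birk(\log(3/2))=0$) is not a ``short variational argument'': the heuristic that an orbit near $\frac{1}{2}$ must next visit a neighbourhood of $0$ does not by itself yield the sharp constant, and the paper needs the full combinatorial convexity analysis of Section~\ref{Sec:Max-exponent} (or Gelfond's bounds on $\|P^{\pa}_n\|^{\pa}_\infty$) together with Lemmas~\ref{lem:equiv-to-prev-est}--\ref{lem:psi(Dnl)} for the pressure asymptote. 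Finally, your closing claim that only countably many orbits realise the maximum is unsupported and unnecessary; $\birk(\log(3/2))=0$ already follows from your own covering bound once the asymptote of $p$ is known.
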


\begin{figure}
  \includegraphics[width= 0.7\textwidth]{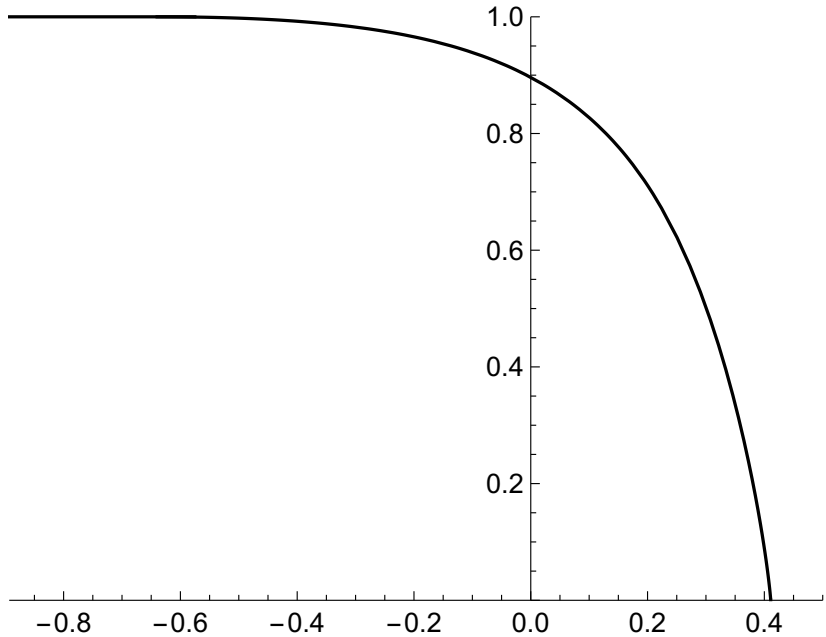}
  \caption{The graph of the Birkhoff spectrum $b$ from
    Eq.~\eqref{eq:Birk-spec-and-level-sets}.
  \label{fig:The-graph-of-f(alpha)}}
\end{figure}

The main technical difficulty in proving this result can be traced
back to the fact that the potential $\psi$ exhibits a singularity at
$0 \in \TT$. Since the authors are not aware of any general framework
covering this case, we choose to present not only the general line of
arguments but also the individual steps in detail.  One major part in
proving this result is the calculation of the pressure function given
in Eq.~\eqref{eq:def-pressure}.  This can be seen as a tractable
example for a pressure function with an unbounded potential for which
we establish an interesting and important approximation result in
Section~\ref{Sec:Restricted-pressure}.  We will present the crucial
properties of $p$ needed for the proof of our main theorem in Section
\ref{Sec:p-properties}; see \cite{KimLiaoRamsWang17} for related
results on non-integrable locally constant potentials with
singularities.\footnote{We would like to mention that J{\"o}rg
  Schmeling et al.\ are independently working on general results for
  locally constant potentials with singularities.}  Finite-size
scaling arguments for the validity of Theorem~\ref{thm:MAIN-MF-1}, and
a numerical approach that is equivalent to
Figure~\ref{fig:The-graph-of-f(alpha)}, have been provided by
\cite{GL90}.  In \cite{fan-multifrac}, a general approach for the
multifractal analysis of measures generated by infinite products is
given, which does not allow for singularities in the potential
function, however.  Finally, we will use classical results to show
that $\nu$ is an \emph{equilibrium} measure.  In this context, we will
also give a precise numerical value for the metric entropy of $\nu$.
\smallskip

The paper is organised as follows.  In
Section~\ref{Sec:Preliminaries}, we introduce an alternative view to
relate $(\TT,T)$ to a symbolic shift space and establish some
properties of the potential $\psi$.  This enables us to prove the
maximal scaling exponent $\beta$ in
Section~\ref{Sec:Max-exponent}. Some properties that are reminiscent
of Gibbs measures are shown, in Section~\ref{Sec:Gibbs-type}, to hold
for $\nu$.  In Section~\ref{Sec:Restricted-pressure}, we show that the
full pressure function emerges as a limit of pressure functions
that are restricted to certain subshifts of finite type.  This allows
us to employ results on H\"{o}lder continuous potentials and thus to
give a proof of our main theorem in Section~\ref{Sec:Proof-of-Thm}.
Further properties of the pressure function are established in
Section~\ref{Sec:p-properties}, thus providing all necessary
properties to prove the remaining part of Theorem~\ref{thm:MAIN-MF-1}
in Section~\ref{Sec:Proof-of-Thm-II}, whereas
Section~\ref{Sec:Eq-measure} is devoted to embed the previous results
into the realm of equilibrium measures and the variational principle.

\section{Preliminaries and Notation}\label{Sec:Preliminaries}

\subsection{Norms and metrics}\label{subsec:norms}
Consider the ergodic dynamical system $(\TT, T, \lambda)$, with
mapping $T \colon x \mapsto 2x$ (mod $1$) and $\lambda$ the Lebesgue
measure on $\TT$.  This enables us to regard the function
$\psi^{\pa}_{n} = \sum_{\ell = 0}^{n-1} \psi \circ T^{\ell}$ as a
Birkhoff sum with ergodic transformation $T$.  It is sometimes more
convenient to consider the closed unit interval instead of $\TT$, with
or without identifying the endpoints --- which will be clear from the
context. When doing so, we extend $T$ from $\TT$ to $[0,1]$ in the
obvious way, with $T(1)=1$ and $T \left( \frac{1}{2} \right) =
1$. This is then consistent with the identification of $1$ and $0$.

Information about the orbit of a point $x\in \TT$ under the action of
$T$ is most easily obtained if we represent $x$ as a binary
sequence. Let $\XX = \{ 0,1 \}^{\NN}$ be the one-sided binary shift
space, and use $x = (x^{\pa}_{1}, x^{\pa}_{2}, \ldots )$ to denote a
sequence $x \in \XX$. By slight abuse of notation, we also use $x$ to
denote the corresponding number,
\[
     x \, = \sum_{i=1}^{\infty} x^{\pa}_{i} \ts\ts 2^{-i} ,
\]
where the actual meaning will always be clear from the context.  This
way, $x\in\XX$ is always a number in $[0,1]$, which we will often
simply write as the binary string
$x^{\pa}_{1} \ts x^{\pa}_{2} \cdots $, in line with
$2 \cdot x = x^{\pa}_{2} \ts x^{\pa}_{3} \cdots$ etc. Thus, the shift
action $\sigma$ on $\XX$ can be represented via the doubling map $T$
on $[0,1]$.

With the bit flip $\widehat{0}=1$ and $\widehat{1}=0$, which extends
to $x$ bitwise, one obtains $\widehat{x} = 1-x$ for any $x\in [0,1]$,
with the latter written in binary expansion.  For many of our
purposes, it does not matter that the dyadic numbers have two
representations as a sequence.  We also introduce notation for the set
of finite binary strings,
$\Sigma^{\star} = \bigcup_{n \in \NN} \{0,1\}^n$, and for the periodic
continuation of some $\omega \in \Sigma^{\star}$ as the infinite
sequence $\overline{\omega} = \omega \omega \omega \cdots$.  With
$q^{\pa}_{1} , \ldots , q^{\pa}_{n} \in \{ 0,1 \}$, we denote the
corresponding \emph{cylinder} for the string
$q^{\pa}_1 \! \cdots q^{\pa}_n$ as
\[
    \langle \ts q^{\pa}_{1} \! \cdots  q^{\pa}_{n} \rangle \, := \,
    \{ q^{\pa}_{1} \! \cdots q^{\pa}_{n} x : x\in \XX \ts  \} \ts ,
\]
where we employ the standard convention for the concatenation of
sequences.  In particular, by our identification, we have
$\langle 0 \rangle = \bigl[0, \frac{1}{2} \bigr]$ and
$\langle 1 \rangle = \bigl[ \frac{1}{2}, 1 \bigr]$.

Note that both the Hausdorff dimension and the local dimension of a
measure implicitly depend on the metric chosen.  Following our
discussion above, in addition to $( \TT,\varrho^{\pa}_{\mathrm{E}})$
with the Euclidean distance $\varrho^{\pa}_{\mathrm{E}}$, we also want
to consider $(\XX,\varrho^{\pa}_2)$, with $\varrho^{\pa}_2$ as defined
earlier.  We note that, even with the above identification of points
between $\XX$ and $[0,1]$, this metric is \emph{not} equivalent to
$\varrho^{\pa}_{\mathrm{E}}$. In particular, $0\overline{1}$ and
$1\overline{0}$ have Euclidean distance $0$ but distance $1$ in the
shift space. Nevertheless, $\varrho^{\pa}_2$ and
$\varrho^{\pa}_{\mathrm{E}}$ show some consistency in the sense that
both metrics assign the same length $2^{-n}$ to cylinders (intervals)
of the form $\langle q_1 \! \cdots q_n \rangle$.  This will be
sufficient to conclude that Theorem~\ref{thm:MAIN-MF-1} holds
irrespective of whether $\nu$ is considered as a measure on the metric
space $(\XX, \varrho^{\pa}_2)$ or on the space
$\bigl( [0,1],\varrho^{\pa}_{\mathrm{E}} \bigr)$.

\subsection{Basic properties of the measure
  \texorpdfstring{$\nu$}{nu}
  and the potential \texorpdfstring{$\psi$}{psi}}
Let $P^{\pa}_{0} :=1$ and set $P^{\pa}_{1} (x) := 1 - \cos (2 \pi
x)$. Now, for $n\geqslant 0$, define the trigonometric polynomial
$P^{\pa}_{n}$ on $\TT$ recursively by
\begin{equation}\label{eq:P-def}
    P^{\pa}_{n+1} (x) \,  :=  \, P^{\pa}_{n} (2 x) \ts P^{\pa}_{1} (x).
\end{equation}
Clearly, one has
$P^{\pa}_{n} (x) = \prod_{\ell=0}^{n-1} P^{\pa}_{1} (2^\ell x)$.
A simple calculation gives the following result.

\begin{fact}
  For each\/ $n\in\NN_{0}$, the trigonometric polynomial\/ $P_{n}$ is
  non-negative and defines a probability density, both on\/ $\TT$ and
  on\/ $[0,1]$.  \qed
\end{fact}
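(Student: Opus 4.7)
The plan has two parts, corresponding to the two claims.

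Non-negativity is essentially immediate. From the explicit product form $P^{\pa}_{n}(x) = \prod_{\ell=0}^{n-1}\bigl(1 - \cos(2\pi \ts 2^\ell x)\bigr)$, every factor lies in $[0,2]$, so $P^{\pa}_n \geqslant 0$ pointwise on $\TT$ and on $[0,1]$. Since $P^{\pa}_n$ is a trigonometric polynomial with integer frequencies, it is $1$-periodic, so the values at $0$ and $1$ match and the statement is the same on either domain.

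For the normalization $\int_0^1 P^{\pa}_n \dd\lambda = 1$, my plan is induction on $n$, with base case $P^{\pa}_0 = 1$ trivial. For the inductive step I would start from the recursion $P^{\pa}_{n+1}(x) = P^{\pa}_n(2x)\ts P^{\pa}_1(x)$, split the integral at $x = 1/2$, and substitute $u = 2x$ on $[0,\tfrac{1}{2}]$ and $u = 2x - 1$ on $[\tfrac{1}{2}, 1]$. Using $1$-periodicity of $P^{\pa}_n$, this produces
\[
  \int_0^1 P^{\pa}_{n+1}(x) \dd x \, = \, \tfrac{1}{2}\int_0^1 P^{\pa}_n(u)\bigl(P^{\pa}_1(u/2) + P^{\pa}_1((u+1)/2)\bigr) \dd u .
\]
The key elementary identity is $P^{\pa}_1(u/2) + P^{\pa}_1((u+1)/2) = (1 - \cos(\pi u)) + (1 + \cos(\pi u)) = 2$, which collapses the right-hand side to $\int_0^1 P^{\pa}_n(u) \dd u$ and closes the induction.

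There is really no obstacle here; the only point to be a little careful about is that the recursion is defined on $\TT$, so one has to invoke $1$-periodicity when passing from $[1/2,1]$ back to $[0,1]$ after the substitution. An alternative that I would mention in passing is a Fourier-coefficient argument: expanding $P^{\pa}_1$ gives Fourier support $\{-1,0,1\}$ with coefficients $(-\tfrac12, 1, -\tfrac12)$, and the recursion shows that the constant Fourier coefficient of $P^{\pa}_{n+1}$ equals that of $P^{\pa}_n$, again yielding $\hat{P}^{\pa}_n(0) = 1$ for every $n$. Either route establishes that $P^{\pa}_n$ is a probability density, whether regarded on $\TT$ or on $[0,1]$.
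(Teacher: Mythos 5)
Your proposal is correct and complete; the paper itself leaves this as ``a simple calculation'' with no written proof, and your inductive argument via the identity $P^{\pa}_1(u/2)+P^{\pa}_1((u+1)/2)=2$ (equivalently, the Fourier-coefficient observation that the constant term is preserved under the recursion \eqref{eq:P-def}) is exactly the standard computation being alluded to.
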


For our study of growth estimates and asymptotic properties, we
write
\begin{equation}\label{eq:psi-def}
   \psi^{\pa}_{n} (x) \, = \, \log \bigl( P_{n} (x) \bigr) .
\end{equation}
We abbreviate $\psi = \psi^{\pa}_{1}$, which has a unique maximum at
$x=\frac{1}{2}$ and singularities at $x=0$ and $x=1$ with value
$-\infty$. The central role of $\psi$ as a thermodynamic potential
will emerge when we discuss the variational principle in
Section~\ref{Sec:Eq-measure}. The first two derivatives of $\psi$ are
\begin{equation}\label{eq:psi-deriv}
  \psi^{\ts \prime} (x) \, = \,
   \frac{2 \ts \pi \, \sin(2 \pi x)}{1-\cos(2 \pi x)}
   \qquad \text{and} \qquad
   \psi^{\ts \prime \prime} (x) \, = \,
   \frac{- 4\ts \pi^{2}}{1-\cos(2 \pi x)} \ts , 
\end{equation}
where $\psi^{\ts \prime \prime} (x) < 0$.  This, together with
Eq.~\eqref{eq:P-def}, immediately implies the following result.

\begin{fact}\label{fact:psi-props}
  On\/ $[0,1]$, the function\/ $\psi$ is strictly concave and
  satisfies the symmetry relation\/ $\psi (1-x) = \psi (x)$.
  Moreover, for all\/ $n\in\NN$, the functions\/ $\psi^{\pa}_{n}$ from
  Eq.~\eqref{eq:psi-def} satisfy the recursions
\[
  \psi^{\pa}_{n+1} (x) \, = \, \psi^{\pa}_{n} (2 x) + \psi (x)
  \qquad \text{and} \qquad
  \psi^{\pa}_{n+1} (x) \, = \, \psi^{\pa}_{n} (x) + \psi (2^{n}x)
\]
as well as the symmetry relations\/
$\psi^{\pa}_{n} (1-x) = \psi^{\pa}_{n} (x)$.  \qed
\end{fact}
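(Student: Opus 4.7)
The statement is essentially a bookkeeping exercise, and I would organise the proof around the three ingredients: strict concavity, the two recursions, and the symmetries.

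For the strict concavity, the second derivative in Eq.~\eqref{eq:psi-deriv} satisfies $\psi''(x) = -4\pi^2/(1-\cos(2\pi x)) < 0$ on the open interval $(0,1)$ (with the sole exception of $x=\tfrac{1}{2}$ being understood as an interior maximum), which gives strict concavity on $(0,1)$ in the usual calculus sense. At the endpoints $0$ and $1$, the value $\psi=-\infty$ is consistent with strict concavity under the convention $\log 0 = -\infty$. The symmetry $\psi(1-x)=\psi(x)$ on $[0,1]$ is immediate from $\cos\bigl(2\pi(1-x)\bigr)=\cos(2\pi x)$, so that $P_{1}(1-x)=P_{1}(x)$.

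For the two recursions, I would simply take the logarithm of the defining identity in Eq.~\eqref{eq:P-def}. The first recursion $\psi_{n+1}(x)=\psi_{n}(2x)+\psi(x)$ is $\log$ of $P_{n+1}(x)=P_{n}(2x)\ts P_{1}(x)$. For the second, I would use the explicit product representation $P_{n}(x)=\prod_{\ell=0}^{n-1}P_{1}(2^{\ell}x)$ noted right after Eq.~\eqref{eq:P-def}, which gives $P_{n+1}(x)=P_{n}(x)\ts P_{1}(2^{n}x)$ by peeling off the last factor; taking logarithms yields $\psi_{n+1}(x)=\psi_{n}(x)+\psi(2^{n}x)$.

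For the symmetry $\psi_{n}(1-x)=\psi_{n}(x)$, the cleanest route is via the sum representation $\psi_{n}(x)=\sum_{\ell=0}^{n-1}\psi(2^{\ell}x)$, where $\psi$ is viewed as a function on $\TT$. Since $2^{\ell}(1-x)\equiv -2^{\ell}x\pmod{1}$ and $\psi(-y)=\psi(y)$ on $\TT$ because $\cos$ is even, each summand is preserved under $x\mapsto 1-x$. Alternatively, one can prove this by induction using the first recursion together with $\psi(1-x)=\psi(x)$ and the observation that $2(1-x)\equiv -2x\equiv 1-2x\pmod{1}$. There is no genuine obstacle here: every claim reduces, after two or three lines, to the evenness of $\cos$ and the product defining $P_{n}$.
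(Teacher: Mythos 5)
Your proposal is correct and follows essentially the same route the paper takes, namely reading off strict concavity from $\psi''<0$ in Eq.~\eqref{eq:psi-deriv}, the symmetries from the evenness and $1$-periodicity of $\cos(2\pi\,\cdot)$, and the two recursions by taking logarithms of Eq.~\eqref{eq:P-def} and of the product representation of $P_n$. The only small blemish is your parenthetical about $x=\tfrac{1}{2}$: since $1-\cos(\pi)=2$, the formula for $\psi''$ holds there without exception, so no special treatment is needed.
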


\begin{figure}[ht]
\includegraphics[width= 0.7\textwidth]{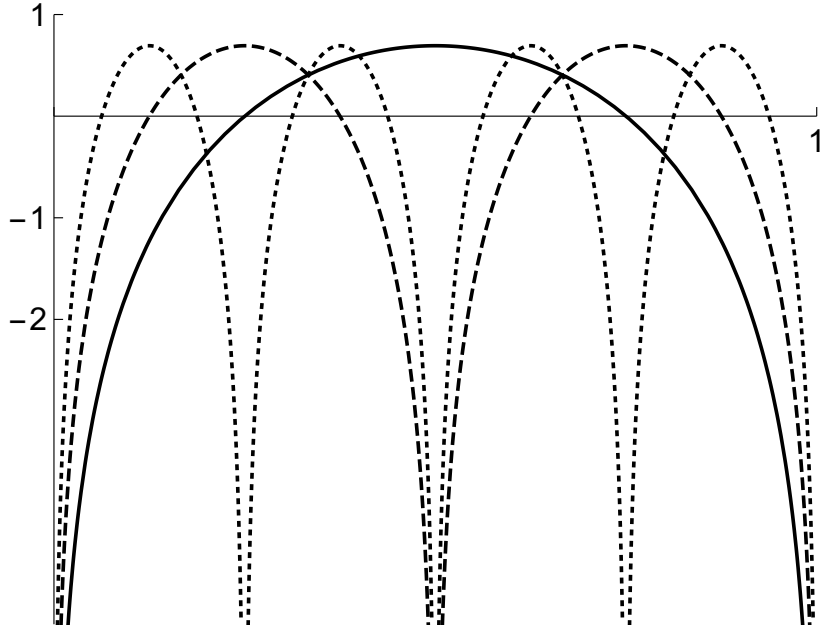}
\caption{\label{fig:fun}Illustration of the graphs of $\psi(x)$ (solid
  line), $\psi(2x)$ (dashed line) and $\psi(4x)$ (dotted line).}
\end{figure}

Observe that $\psi (2^{m} x)$ is a periodic function, with fundamental
period $2^{-m}$, which means that $\psi ( 2^{m} x)$ consists of $2^m$
`humps' of identical shape; see Figure~\ref{fig:fun} for an
illustration. Moreover, if $m\geqslant 2$, it is symmetric under a
reflection in $x=\frac{1}{2}$, with two humps on any cylinder of the
form $\langle \ts q^{\pa}_{1} \! \cdots q^{\pa}_{m-1} \rangle$. A
simple calculation then yields the following property, which is
illustrated in Figure~\ref{fig:birk-sum}.

\begin{fact}
  For any integer\/ $n\geqslant 2$, the function\/ $\psi^{\pa}_{n}$ is
  strictly concave on every cylinder of the form\/
  $\langle \ts q^{\pa}_{1} \! \cdots q^{\pa}_{n-1} \rangle$ with\/
  $q^{\pa}_{i} \in \{0,1\}$, and has singularities on the boundary
  points of them. In other words, $\psi^{\pa}_{n}$ consists of\/
  $2^{n-1}$ humps on\/ $[0,1]$.  \qed
\end{fact}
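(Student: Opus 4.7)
The plan is to derive both the concavity and the singularity structure directly from the decomposition $\psi^{\pa}_{n}(x) = \sum_{\ell=0}^{n-1} \psi(2^{\ell} x)$, using only Fact~\ref{fact:psi-props} and the explicit second derivative formula in Eq.~\eqref{eq:psi-deriv}.

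First, I would locate the singular set of $\psi^{\pa}_{n}$ on $[0,1]$. The only singularities of $\psi$ on $[0,1]$ are at $0$ and $1$, so $\psi(2^{\ell} x) = -\infty$ exactly at the dyadic points $\{k\ts 2^{-\ell}: k = 0,1,\ldots,2^{\ell}\}$. Taking the union over $\ell = 0,\ldots,n-1$ yields the set $\{k\ts 2^{-(n-1)} : k = 0,1,\ldots,2^{n-1}\}$, which coincides with the endpoints of the $2^{n-1}$ cylinders $\langle \ts q^{\pa}_{1}\!\cdots q^{\pa}_{n-1}\rangle$. Because every summand is bounded above by $\log(2)$, no cancellation of $-\infty$ values is possible, so $\psi^{\pa}_{n}$ has a genuine singularity at each such endpoint.

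Next, I would prove strict concavity on the interior of each cylinder. Fix $C = \langle \ts q^{\pa}_{1}\!\cdots q^{\pa}_{n-1}\rangle$ and let $\mathring{C}$ denote its interior. By the previous step, for every $\ell \in \{0,\ldots,n-1\}$ and every $x \in \mathring{C}$, the point $2^{\ell} x$ avoids the singular set of $\psi$. Hence, via the chain rule and Eq.~\eqref{eq:psi-deriv},
\[
   \bigl(\psi(2^{\ell}\ts\cdot)\bigr)^{\prime\prime}(x) \, = \,
   4^{\ell}\ts \psi^{\ts\prime\prime}(2^{\ell} x) \, = \,
   \frac{-\ts 4^{\ell+1}\pi^{2}}{1 - \cos(2\pi\cdot 2^{\ell} x)}
   \, < \, 0
\]
for each $\ell$, and summation yields $\psi^{\pa\,\prime\prime}_{n} < 0$ on $\mathring{C}$. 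Combined with the $-\infty$ limits at the endpoints of $C$, this identifies $\psi^{\pa}_{n}\rvert_{C}$ as a single strictly concave hump. Since the $2^{n-1}$ cylinders partition $[0,1]$ (up to shared endpoints), $\psi^{\pa}_{n}$ consists of exactly $2^{n-1}$ humps.

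There is no serious obstacle here: the argument is essentially bookkeeping of dyadic singularities together with the uniform sign of $\psi^{\ts\prime\prime}$. The only mild subtlety is ruling out accidental cancellation of $-\infty$ values at the cylinder endpoints, which is settled at once by the global upper bound $\psi \leqslant \log(2)$ already recorded in Eq.~\eqref{eq:psi-def-1}.
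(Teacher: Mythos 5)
Your argument is correct and is precisely the ``simple calculation'' the paper alludes to: decompose $\psi^{\pa}_{n}(x)=\sum_{\ell=0}^{n-1}\psi(2^{\ell}x)$, note that the singular set is exactly the level-$(n-1)$ dyadic points (the cylinder endpoints), and add up the strictly negative second derivatives from Eq.~\eqref{eq:psi-deriv} on each cylinder interior. Your extra remark that the upper bound $\psi\leqslant\log(2)$ rules out cancellation of $-\infty$ values is a sensible (if minor) point the paper leaves implicit.
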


\begin{figure}[ht]
\includegraphics[width = 0.7\textwidth]{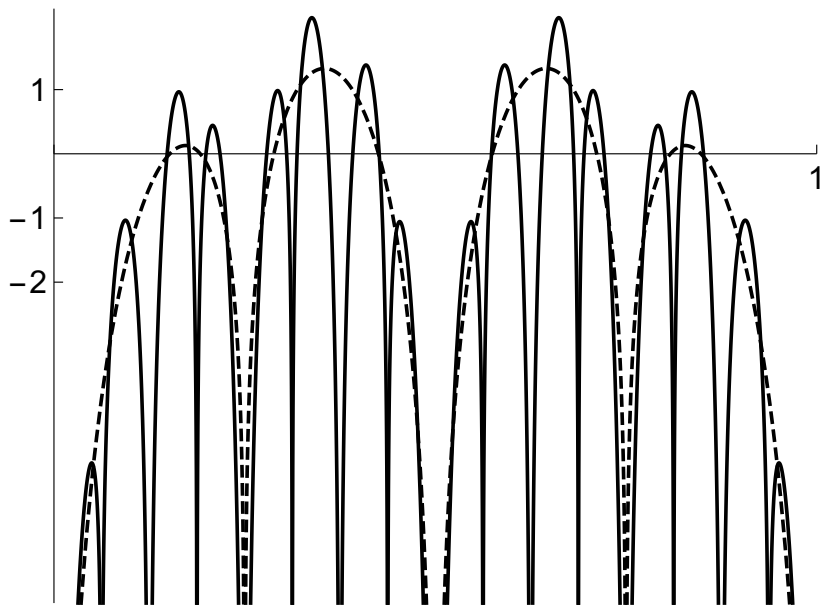}
\caption{\label{fig:birk-sum}Illustration of $\psi^{}_{3} (x)$ (dotted
  line) and $\psi^{}_{5} (x)$ (solid line).}
\end{figure}

\section{Maximal scaling exponent}\label{Sec:Max-exponent}

This section is devoted to the proof of the following proposition,  
which is crucial also for our main theorem. 

\begin{prop}\label{maxBeta}
  The maximal value of\/ $\beta$ is 
   given by
\[
  \max_{x\in [0,1]}\limsup_{n\to\infty}
  \frac{\psi^{\pa}_{n} (x)}{n \ts \log (2)}
  \, = \,  \beta \Bigl( \myfrac{1}{3} \Bigr)
  \, = \, \beta \Bigl( \myfrac{2}{3} \Bigr)
  \, = \,  \frac{\log (3/2)}{\log (2)} 
  \, \approx \, 0.584 {\,} 963 \ts .
\]    
\end{prop}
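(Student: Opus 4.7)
The plan is to establish the value at $x=1/3$ by a direct orbit calculation and to prove the matching upper bound by an elementary inequality chain based on the concavity of $\log$ and the non-negativity of an empirical variance.

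For the lower bound, I would observe that $T(1/3) = 2/3$ and $T(2/3) = 1/3$, so the orbit has period two. Since $1 - \cos(2\pi/3) = 1 - \cos(4\pi/3) = 3/2$, one has $\psi_n(1/3) = n\log(3/2)$ for every $n$, yielding $\beta(1/3) = \log(3/2)/\log(2)$; the calculation for $x = 2/3$ is identical. This handles the lower bound and identifies the maximisers.

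For the upper bound, I would argue pointwise. For any $x$ outside the countable set of dyadic rationals (on which some factor in $P_n$ vanishes and the Birkhoff sum is $-\infty$), Jensen's inequality applied to the concave function $\log$ gives
\[
  \frac{\psi_n(x)}{n} \, = \, \frac{1}{n}\sum_{\ell=0}^{n-1} \log\bigl(1 - \cos(2\pi\, 2^{\ell} x)\bigr) \, \leq \, \log\bigl(1 - S_n(x)\bigr),
\]
where $S_n(x) := \frac{1}{n}\sum_{\ell=0}^{n-1} \cos(2\pi\, 2^{\ell} x)$. The heart of the proof is a uniform lower bound on $S_n$: using $\cos^2\theta = (1+\cos 2\theta)/2$ together with a single index shift,
\[
  \frac{1}{n}\sum_{\ell=0}^{n-1} \cos^2(2\pi\, 2^{\ell} x) \, = \, \frac{1}{2} + \frac{1}{2}S_n(x) + \frac{\cos(2\pi\, 2^{n} x) - \cos(2\pi x)}{2n},
\]
in which the last term is bounded by $1/n$ in absolute value. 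The elementary variance inequality $S_n(x)^2 \leq \frac{1}{n}\sum \cos^2(2\pi\, 2^\ell x)$ then yields $(2 S_n(x)+1)(S_n(x)-1) \leq 2/n$. Writing $S_n(x) = -1/2 - \delta$ with $\delta \geq 0$ (the only case where the left side can be positive) forces $3\delta + 2\delta^2 \leq 2/n$, hence $\delta \leq 2/(3n)$. Thus $S_n(x) \geq -\tfrac{1}{2} - \tfrac{2}{3n}$ for every $x$ and every $n$, and so $\psi_n(x)/n \leq \log\bigl(3/2 + 2/(3n)\bigr)$, tending to $\log(3/2)$. This gives $\limsup_n \psi_n(x)/(n\log(2)) \leq \log(3/2)/\log(2)$ for every $x$, matching the lower bound.

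The main obstacle I anticipate is keeping the lower bound on $S_n$ uniform in $x$ with an explicit $O(1/n)$ correction. One could alternatively go through a variational characterisation: for any $T$-invariant probability measure $\mu$ the same variance computation (combined with $T$-invariance equating $\int\cos(2\pi y)\,\dd\mu$ and $\int\cos(4\pi y)\,\dd\mu$) yields $\int\cos(2\pi y)\,\dd\mu \geq -1/2$, hence $\int\psi\,\dd\mu \leq \log(3/2)$, and then transfer to $\limsup$ via weak$^*$ compactness of empirical measures and upper semi-continuity of $\psi$; but that detour requires handling the singularity of $\psi$ at $0$ and $1$, which the finite-$n$ argument above avoids entirely.
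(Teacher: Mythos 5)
Your proof is correct, and it takes a genuinely different route from the paper. The paper proves Proposition~\ref{maxBeta} through a combinatorial analysis of binary cylinders: Lemma~\ref{lem:prev-est} and Corollary~\ref{Coro:alternating} compare values of $\psi^{\pa}_n$ across reflected cylinders, Proposition~\ref{prop:pointwise-compare} and Corollary~\ref{coro:maxima} localise the maximum of $\psi^{\pa}_n$ in the cylinder $\langle y^{\pa}_1\!\cdots y^{\pa}_n\rangle$ around $y=\overline{01}=\frac{1}{3}$, and Lemma~\ref{lem:const} then bounds $\max_x\psi^{\pa}_n(x)-\psi^{\pa}_n(y)$ by a uniform constant. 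Your argument replaces all of this by Jensen's inequality together with the identity $\cos^2\theta=\tfrac{1}{2}(1+\cos 2\theta)$, an index shift, and the non-negativity of the empirical variance of $\bigl(\cos(2\pi\ts 2^{\ell}x)\bigr)_{\ell<n}$; each step checks out, including the quadratic inequality $(2S_n+1)(S_n-1)\leqslant 2/n$ and the conclusion $S_n(x)\geqslant -\tfrac{1}{2}-\tfrac{2}{3n}$ uniformly in $x$. This is essentially the classical route to bounding $\|P^{\pa}_n\|^{\pa}_\infty$ that the paper's remark after the proposition attributes to Gelfond; as a bonus it even yields $\max_x\psi^{\pa}_n(x)\leqslant n\log(3/2)+n\log\bigl(1+\tfrac{4}{9n}\bigr)\leqslant n\log(3/2)+\tfrac{4}{9}$, i.e.\ Lemma~\ref{lem:const} with a better constant. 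What your approach does \emph{not} deliver is the localisation statement of Corollary~\ref{coro:maxima} (that the maximum of $\psi^{\pa}_n$ sits inside the alternating cylinder $C_n$), which the paper reuses later in the proof of Lemma~\ref{lem:psi(Dnl)} when establishing the asymptote of the pressure function; so the paper's longer detour buys structural information beyond the numerical value of the maximum, while yours buys brevity and self-containedness for the proposition itself.
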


To prove this statement, we need to identify all cylinders
$\langle q^{\pa}_{1} \! \cdots q^{\pa}_{n-1}\rangle$ where the maximal
value of $\psi^{\pa}_{n}$ occurs.  Since it will turn out that such a
maximum never lies on one of the endpoints of such a cylinder, we can
go one step further and consider the cylinders
$\langle q^{\pa}_{1} \! \cdots q^{\pa}_{n} \rangle$ for
$\psi^{\pa}_{n}$. From Fact~\ref{fact:psi-props}, we know that
$\psi^{\pa}_{n} (\widehat{x}\ts ) = \psi^{\pa}_{n} (x)$ holds for all
$n\in\NN$. It thus suffices to consider
$x\in \bigl[ 0, \frac{1}{2} \bigr]$.

To continue, we introduce $\psi^{(n)}(x) := \psi(2^{n-1} x)$ for
$n \in \NN$, so that $\psi^{(1)} = \psi$ and
\[
     \psi^{\pa}_n (x) \, = \, 
    \psi^{(n)}(x) + \psi^{(n-1)}(x) + \cdots + 
    \psi^{(1)}(x) \ts .
\]
Furthermore, we need to go into some detail on the prefixes of a given
sequence.  Here, if $x=x^{\pa}_{1} x^{\pa}_{2} \cdots$ and $m\in\NN$,
we use the shorthand
$x^{[m]} := x^{\pa}_{1} x^{\pa}_{2} \nts \cdots x^{\pa}_{m}$ for the
prefix of $x$ of length $m$. Let us also fix a notation for the
alternating sequences in $\XX$, namely
\begin{equation}\label{eq:def-y-hat-y}
  y \, = \, \overline{01}\qquad\text{and}\qquad
     \widehat{y}\,=\, \overline{10} \ts ,
\end{equation}
where one implies the other.

\begin{lemma}\label{lem:prev-est}
  Let\/ $y \in \XX $ be the sequence from
  \eqref{eq:def-y-hat-y}. Then, the estimate
\begin{equation}\label{eq:ineq}
    \bigl(\psi + \psi^{(2)} \bigr) (y^{[2k]}0\ts x) \, > \,
     \bigl(\psi + \psi^{(2)}\bigr) (y^{[2k]}1\widehat{x} \ts )
\end{equation}
holds for any\/ $k \in \NN$ and all\/ $x\in\XX$ with\/
 $x \ne  \overline{1} := 111 \cdots$.
\end{lemma}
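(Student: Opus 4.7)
The plan is to identify $a := y^{[2k]}0\ts x$ and $b := y^{[2k]}1\ts\widehat{x}$ explicitly as real numbers, and then compare the function $\psi + \psi^{(2)} = \psi^{\pa}_{2}$ at these two points. Writing $S^{\pa}_{k} := \sum_{j=1}^{k} 4^{-j} = (1-4^{-k})/3$ and using $\widehat{x} = 1-x$, one finds
\[
   a \, = \, S^{\pa}_{k} + 2^{-(2k+1)} x \ts , \qquad
   b \, = \, S^{\pa}_{k} + 2^{-2k} - 2^{-(2k+1)} x \ts ,
\]
so that $b - a = 2^{-2k}(1-x) > 0$ whenever $x \neq \overline{1}$, since then $x < 1$ as a number. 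Both points thus lie in $\langle y^{[2k]} \rangle = [S^{\pa}_{k}, S^{\pa}_{k} + 4^{-k}]$, with $a \in \langle y^{[2k]} 0 \rangle$, $b \in \langle y^{[2k]} 1 \rangle$, and $a < b$ strictly.

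For $k \geqslant 2$, I would argue by monotonicity. Since $S^{\pa}_{k} \geqslant S^{\pa}_{2} = 5/16$ and $S^{\pa}_{k} + 4^{-k} \leqslant S^{\pa}_{2} + 4^{-2} = 3/8$, both points lie in $[5/16, 3/8]$, so it suffices to show $\psi^{\pa}_{2}$ is strictly decreasing on $[5/16, 1/2)$. From $\psi^{\ts\prime}(x) = 2\pi \cot(\pi x)$ and the addition formulas, a short computation yields
\[
  \psi_{2}^{\ts\prime}(x) \, = \, 2\pi\ts\bigl(\cot(\pi x) + 2\cot(2\pi x)\bigr)
   \, = \, 2\pi\ts\frac{3\cos(2\pi x) + 1}{\sin(2\pi x)} \ts .
\]
For $x \in [5/16, 1/2)$ we have $\sin(2\pi x) > 0$, and $\cos(2\pi x) \leqslant \cos(5\pi/8) = -\sin(\pi/8)$, which is strictly less than $-1/3$ since $\sin^2(\pi/8) = (2-\sqrt{2})/4 > 1/9$ is equivalent to $14 > 9\sqrt{2}$, i.e., $196 > 162$. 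Hence $\psi_{2}^{\ts\prime} < 0$ on $[5/16, 1/2)$, and together with $a < b$ this gives $\psi^{\pa}_{2}(a) > \psi^{\pa}_{2}(b)$.

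The case $k = 1$ is the main obstacle, as $\langle y^{[2]} \rangle = [1/4, 1/2]$ contains the unique maximum $\alpha^{*} = \arctan(\sqrt{2})/\pi$ of $\psi^{\pa}_{2}$ on $(0,1/2)$, and monotonicity on the cylinder fails. Here one has $a + b = 3/4$, i.e., $b = 3/4 - a$, and the symmetry $\psi(1-z) = \psi(z)$ together with the $1$-periodicity of $\psi$ gives
\[
   \psi(b) \, = \, \log\bigl(1 + \sin(2\pi a)\bigr) \ts , \qquad
   \psi(2b) \, = \, \log\bigl(1 + \cos(4\pi a)\bigr) \ts .
\]
Setting $u := 2\pi a \in [\pi/2, 3\pi/4)$ and applying $1 \mp \cos(2u) = 2\sin^{2} u,\, 2\cos^{2} u$, the claim $\psi^{\pa}_{2}(a) > \psi^{\pa}_{2}(b)$ reduces to
\[
  (1 - \cos u)\ts \sin^{2} u \, > \, (1 + \sin u)\ts\cos^{2} u \ts .
\]
Introducing $s := \sin u + \cos u = \sqrt{2}\ts \sin(u + \pi/4) \in (0,1]$ and using $\sin u \geqslant 0 \geqslant \cos u$ on this range to write $\sin u - \cos u = \sqrt{2-s^{2}}$, the identities $-\cos(2u) = s\sqrt{2-s^{2}}$ and $\sin(2u) = s^{2}-1$ simplify the inequality to
\[
  s\ts\Bigl(\sqrt{2 - s^{2}} + \tfrac{1-s^{2}}{2}\Bigr) \, > \, 0 \ts ,
\]
which holds because both factors are strictly positive on $(0,1]$. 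The exclusion $x \neq \overline{1}$ corresponds precisely to $s > 0$ (i.e., $u < 3\pi/4$), preserving strictness.
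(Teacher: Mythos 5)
Your proof is correct, and it takes a genuinely different route from the paper's. You work entirely on the real line: you identify $a=y^{[2k]}0\ts x$ and $b=y^{[2k]}1\widehat{x}$ explicitly as numbers in $\langle y^{[2k]}\rangle$ with $a<b$, and then (i) for $k\geqslant 2$ you confine both points to $[5/16,3/8]$ and use the closed form $\psi_{2}^{\ts\prime}(x)=2\pi\bigl(3\cos(2\pi x)+1\bigr)/\sin(2\pi x)$ to get strict monotonicity there, and (ii) for $k=1$ you exploit the reflection $b=\frac34-a$ and reduce the claim to the elementary inequality $s\bigl(\sqrt{2-s^{2}}+\frac{1-s^{2}}{2}\bigr)>0$. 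I have checked the identities $\psi(b)=\log(1+\sin u)$, $\psi(2b)=\log(1+\cos 2u)$ and the algebra behind the reduction; they are all correct, and the excluded sequence $x=\overline{1}$ corresponds exactly to $s=0$, as you say. The paper instead stays on the symbolic side: using the shift-and-flip identities $\psi^{(2)}(y^{[2k]}0\ts x)=\psi(y^{[2k-1]}1\widehat{x}\ts)$ and $\psi^{(2)}(y^{[2k]}1\widehat{x}\ts)=\psi(y^{[2k-1]}0\ts x)$, it rewrites the claim as a comparison of two increments of $\psi$ alone over nested pairs of points, which then follows from nothing more than $\psi$ being increasing with decreasing derivative on $\bigl[0,\frac12\bigr]$ (an integral comparison). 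The paper's argument is softer and would survive replacing $\psi$ by any symmetric potential that is increasing and concave on $\bigl[0,\frac12\bigr]$; yours is tied to the trigonometric form of $\psi$ but is fully explicit and makes the role of the exclusion $x\neq\overline{1}$ transparent. One point worth stating explicitly in your write-up: for $k=1$ and $x=\overline{0}$ one has $b=\frac12$ and $\psi^{\pa}_{2}(b)=-\infty$, so passing from the product inequality $(1-\cos u)\sin^{2}u>(1+\sin u)\cos^{2}u$ back to the logarithmic statement uses that the left-hand side is strictly positive on $[\pi/2,3\pi/4)$ --- which it is.
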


\begin{proof}
  Using the identities
  $\psi^{(2)}(y^{[2k]}0\ts x) = \psi (\widehat{y}^{\, [2k-1]}0\ts x) =
  \psi(y^{[2k-1]}1\widehat{x}\ts )$ in conjunction with
  $\psi^{(2)}(y^{[2k]}1\widehat{x} \ts ) = \psi (\widehat{y}^{\,
    [2k-1]}1\widehat{x} \ts) = \psi(y^{[2k-1]}0\ts x)$, we can rewrite
  Eq.~\eqref{eq:ineq} equivalently as
\begin{equation}\label{eq:prev-est-1}
     \psi(y^{[2k-1]}1\widehat{x} \ts ) - \psi(y^{[2k-1]}0\ts x) \, > \,
      \psi(y^{[2k]}1 \widehat{x} \ts ) - \psi(y^{[2k]}0\ts x ) \ts .
\end{equation}
Clearly, all arguments are in $ \bigl[ 0, \frac{1}{2} \bigr]$. On this
interval, $\psi$ is strictly increasing, while $\psi^{\ts\prime}$ is
positive and decreasing. Note also that
$y^{[2k-1]}0\ts x < y^{[2k]} 0\ts x $ and
$ y^{[2k-1]}1\widehat{x} \leqslant y^{[2k]}1 \widehat{x}$, and that we
have the inequality
\[
    0 \, < \, y^{[2k]}1 \widehat{x}  - y^{[2k]}0\ts x \, = \,
     2^{-2k} (1\widehat{x} - 0\ts x) \, < \, 
    2^{-2k+1} (1 \widehat{x} - 0\ts x) \, = \, y^{[2k-1]}1 
    \widehat{x} - y^{[2k-1]} 0\ts x  \ts .
\]
Thus, we get
\begin{align*}
    \psi(y^{[2k-1]}1\widehat{x} \ts ) - \psi(y^{[2k-1]}0\ts x) \,  & = 
     \int_{y^{[2k-1]}0\ts x}^{y^{[2k-1]}1\widehat{x}} 
     \psi^{\ts\prime}(u) \dd u \, > \, \int_{y^{[2k-1]}1\widehat{x} - 
        (y^{[2k]}1\widehat{x} - y^{[2k]}0\ts x)}^{y^{[2k-1]}1
           \widehat{x}} \psi^{\ts\prime}(u) \dd u   \\[2mm] 
           & \geqslant \int_{y^{[2k]}0\ts x}^{y^{[2k]}1\widehat{x}}
            \psi^{\ts\prime}(v) \dd v  \:  = \: \psi(y^{[2k]}1\widehat{x}) - 
            \psi(y^{[2k]}0\ts x) \ts , 
\end{align*}
where the first inequality is due to a reduction of the integration
region and the second follows via the substitution
$v = u + (y^{[2k]}1\widehat{x} - y^{[2k-1]}1\widehat{x} \ts )
\geqslant u $ so that $\psi^{\ts\prime}(u) \geqslant \psi^{\ts\prime}(v) $.
\end{proof}

This observation has the following consequence.

\begin{coro}\label{Coro:alternating}
  Suppose that\/ $y$ is the alternating sequence from
  \eqref{eq:def-y-hat-y}.  Then, for any\/ $n \in \NN$, one has\/
  $\, \psi^{\pa}_{n} \bigl( y^{[n+1]}x \bigr) > \psi^{\pa}_n \bigl(
  y^{[n]}\widehat{y}_{n+1} \widehat{x} \ts \bigr)$, provided that\/
  $x \ne \overline{a}:= aaa\ldots$ with\/ $a = y^{\pa}_{n+2}$.
\end{coro}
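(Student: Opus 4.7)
The plan is to expand $\psi^{\pa}_n = \sum_{\ell = 0}^{n-1} \psi \circ T^{\ell}$ into groups of the form $(\psi + \psi^{(2)}) \circ T^{r}$ that are tailored to Lemma~\ref{lem:prev-est}, using two slightly different groupings depending on the parity of $n$ (since the bit $y_{n+1}$ equals $0$ exactly when $n$ is even). Throughout, I write $u = y^{[n+1]} x$ and $v = y^{[n]} \widehat{y}_{n+1} \widehat{x}$ for the two points compared in the corollary.

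For $n = 2m$ even, I group from the start, so that $\psi^{\pa}_n = \sum_{k=0}^{m-1} (\psi + \psi^{(2)}) \circ T^{2k}$. Using the $2$-periodicity of $y$ together with $y_{n+1} = 0$, one verifies directly that $T^{2k} u = y^{[2(m-k)]} \ts 0 \ts x$ and $T^{2k} v = y^{[2(m-k)]} \ts 1 \ts \widehat{x}$. Lemma~\ref{lem:prev-est} applied with index $m - k \geqslant 1$ then gives a strict inequality for each pair, under the sole exclusion $x \ne \overline{1}$. Since $y_{n+2} = 1$ when $n$ is even, this matches the corollary's condition $x \ne \overline{a}$ precisely, and summing over $k$ finishes the case.

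For $n = 2m+1$ odd (with $n = 1$ the degenerate case $m = 0$), I split off the first term and write $\psi^{\pa}_n(w) = \psi(w) + \sum_{k=0}^{m-1} (\psi + \psi^{(2)})(T^{2k+1} w)$. Both $u$ and $v$ lie in $\bigl[ 0, \frac{1}{2} \bigr]$, and since $y_{n+1} = 1$ a direct calculation yields $u - v = 2^{-n} x$, so strict monotonicity of $\psi$ on $\bigl[ 0, \frac{1}{2} \bigr]$ from Fact~\ref{fact:psi-props} gives $\psi(u) > \psi(v)$ whenever $x \ne \overline{0}$. For the paired terms, an odd shift turns the $y$-prefix into a $\widehat{y}$-prefix, producing $T^{2k+1} u = \widehat{y}^{[n-2k-1]} \ts 1 \ts x$ and $T^{2k+1} v = \widehat{y}^{[n-2k-1]} \ts 0 \ts \widehat{x}$, with $n - 2k - 1$ even and at least $2$. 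Here I invoke the bit-flipped analogue $(\psi + \psi^{(2)}) \bigl( \widehat{y}^{[2j]} 1 \ts x \bigr) > (\psi + \psi^{(2)}) \bigl( \widehat{y}^{[2j]} 0 \ts \widehat{x} \bigr)$, valid for $x \ne \overline{0}$, which is immediate from Lemma~\ref{lem:prev-est} applied to $\widehat{x}$ in place of $x$, combined with the bit-flip symmetry $\psi(1-z) = \psi(z)$ from Fact~\ref{fact:psi-props} and its analogue for $\psi^{(2)}$. Summing all strict inequalities yields $\psi^{\pa}_n(u) > \psi^{\pa}_n(v)$ for $x \ne \overline{0} = \overline{y_{n+2}}$.

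The main obstacle is the parity-dependent bookkeeping: carefully tracking how the prefix length and the central bit $y_{n+1}$ versus $\widehat{y}_{n+1}$ transform under even and odd shifts, and verifying in each parity that the excluded tail $\overline{y_{n+2}}$ of the corollary matches exactly the exclusion required to apply Lemma~\ref{lem:prev-est} (or its bit-flipped analogue) in every pair.
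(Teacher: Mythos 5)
Your proof is correct and follows essentially the same route as the paper: the even case is identical (telescoping $\psi^{\pa}_{2m}$ into $(\psi+\psi^{(2)})\circ T^{2k}$ blocks and applying Lemma~\ref{lem:prev-est} to each), and for odd $n$ you merely reorganise what the paper does — the paper applies the recursion $\psi^{\pa}_{2m+1}(w)=\psi^{\pa}_{2m}(Tw)+\psi(w)$ and reduces to the even case via the bit-flip symmetry, which unwinds to exactly your direct pairing with the bit-flipped version of Lemma~\ref{lem:prev-est} plus monotonicity of $\psi$ on $\bigl[0,\frac{1}{2}\bigr]$ for the leftover first term. The parity bookkeeping and the matching of the excluded tail $\overline{y^{\pa}_{n+2}}$ in each case are handled correctly.
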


\begin{proof}
  We show the statement for odd and even $n$ separately. First, assume
  $n = 2m$ with $m \in \NN$. In this case, Lemma~\ref{lem:prev-est}
  implies
\begin{align}
    \psi^{\pa}_{2m} \bigl(y^{[2m]}0\ts x \bigr) \, & = \,
    \left( \bigl( \psi + \psi^{(2)} \bigr) 
     + \bigl( \psi^{(3)} + \psi^{(4)} \bigr)
     + \ldots + \bigl( \psi^{(2m-1)} 
     + \psi^{(2m)} \bigr) \right) \bigl( y^{[2m]}0\ts x \bigr)
      \notag \\
   &= \, \bigl( \psi + \psi^{(2)} \bigr) \bigl( y^{[2m]}0\ts x \bigr) 
    + \bigl( \psi + \psi^{(2)} \bigr) \bigl( y^{[2m-2]}0\ts x \bigr)
    + \ldots + \bigl( \psi + \psi^{(2)} \bigr) 
      \bigl( y^{[2]}0\ts x \bigr) \notag \\[1mm] 
    & > \, \bigl( \psi + \psi^{(2)} \bigr) 
       \bigl( y^{[2m]}1\widehat{x}\ts \bigr) 
     + \bigl( \psi + \psi^{(2)} \bigr) \bigl( 
     y^{[2m-2]}1 \widehat{x} \ts \bigr)
     + \ldots + \bigl( \psi + \psi^{(2)} \bigr)
      \bigl( y^{[2]}1 \widehat{x} \ts \bigr)
      \label{eq:psi2m} \\[1mm]
    & = \, \psi^{\pa}_{2m} \bigl( y^{[2m]}1 \widehat{x}
     \ts \bigr) . \notag
\end{align}
From this result, we can also proceed to $n = 2m+1$ with
$m \in \NN$,
\begin{align*}
   \psi^{\pa}_{2m+1} \bigl( y^{[2m+1]}1x \bigr) \, & = \,
   \psi^{\pa}_{2m} \bigl( \widehat{y}^{[2m]}1x \bigr) +
           \psi \bigl( y^{[2m+1]}1x \bigr) \, \geqslant \,
           \psi^{\pa}_{2m} \bigl( y^{[2m]}0\widehat{x} \ts \bigr)
       + \psi \bigl( y^{[2m+1]}0\ts \widehat{x} \ts \bigr) \\[1mm] 
       & > \, \psi^{\pa}_{2m} \bigl( y^{[2m]}1x \bigr) 
       + \psi \bigl( y^{[2m+1]}0\ts \widehat{x} \ts \bigr) \, = \,
        \psi^{\pa}_{2m} \bigl( \widehat{y}^{[2m]}0\widehat{x} \ts \bigr)
         + \psi \bigl( y^{[2m+1]}0\ts \widehat{x} \ts \bigr) \\[1mm]
       & = \, \psi^{\pa}_{2m+1} \bigl( y^{[2m+1]}0\widehat{x} \ts \bigr) .
\end{align*}
For the first inequality, we have used that $\psi$ is increasing on
$\bigl[ 0, \frac{1}{2} \bigr]$, while the second inequality uses the
corresponding identity for even indices. When $n=1$, the statement
directly follows from the strictly increasing nature of $\psi$ on
$\bigl[ 0, \frac{1}{2} \bigr]$.
\end{proof}

In order to narrow down the position of the maximum of
$\psi^{\pa}_{n}$, we would like to compare the values of
$\psi^{\pa}_n$ on cylinders of the form
$\langle q^{\pa}_1 \! \cdots q^{\pa}_n \rangle$, where we may choose
$q^{\pa}_{1} = 0$ due to the symmetry of $\psi^{\pa}_{n}$.  To this
end, we partition the interval $ \bigl[ 0, \frac{1}{2} \bigr]$ into
sets which contain exactly one element of each cylinder. We will show
that it is possible to choose these sets in such a way that the
maximum of $\psi^{\pa}_n$ on each such set lies \emph{always} in the
cylinder $\langle y^{\pa}_1 \! \cdots y^{\pa}_n \rangle$. The sets can
be constructed via an iterative reflection at the midpoints of
appropriately chosen cylinders. Let us make this more precise as
follows.

\begin{prop}\label{prop:pointwise-compare}
  Let\/ $y$ be the alternating sequence from \eqref{eq:def-y-hat-y}
  and denote the involution on\/ $\XX$ by\/ $I$, so\/
  $I(x) = \widehat{x}$, with\/ $I^{\ts 0} = \mathrm{id}$ as usual.
  Further, define
\[
   A_n (x) \, := \,  \big\{ \ts q^{\pa}_1 \! \cdots q^{\pa}_n 
   \ts I^{q^{\pa}_n}(x) \, : \,  q^{\pa}_1 = 0  \text{ and } \,
   q^{\pa}_2, \ldots,  q^{\pa}_n \in \{0,1\} \big\},
\]
for\/ $n\in\NN$ and\/ $x\in\XX$.  Then, one has\/
$ \bigl[ 0, \frac{1}{2} \bigr] = \bigcup_{x\in\XX} A_n (x)$.

Moreover, for all\/ $n \in\NN$ and\/ $x\in\XX$, the maximum of\/
$\psi^{\pa}_n$ on\/ $A_n (x)$ is taken at\/ $y^{[n]} I^{y_n} (x)$.
This maximum is strict as long as\/ $x\neq \overline{0}$, whereas it
is given by\/ $-\infty$ for\/ $x =\overline{0}$.
\end{prop}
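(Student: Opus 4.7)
The partition claim is direct: given any $z \in [0, 1/2]$, write $z = 0 z_2 z_3 \cdots$ in binary and set $x = z_{n+1} z_{n+2} \cdots$ if $z_n = 0$, and $x = \widehat{z_{n+1} z_{n+2} \cdots}$ if $z_n = 1$. Then $z = 0 z_2 \cdots z_n I^{z_n}(x)$ by construction, so $z \in A_n(x)$.

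For the maximization claim, I proceed by induction on $n$, the base case $n = 1$ being trivial as $A_1(x) = \{0 x\}$ is a singleton. The heart of the inductive step is the disjoint decomposition
\[
  A_{n+1}(x) \, = \, A_n(0 x) \, \sqcup \, A_n(1 \widehat{x}),
\]
obtained by splitting on whether $q_n = q_{n+1}$ (yielding the elements of $A_n(0 x)$) or $q_n \neq q_{n+1}$ (yielding those of $A_n(1 \widehat{x})$); both identifications follow by unpacking the defining conditions. Combined with the recursion $\psi^{\pa}_{n+1}(z) = \psi^{\pa}_{n}(z) + \psi(T^{n} z)$ from Fact~\ref{fact:psi-props}, together with the fact that $T^{n} z$ takes values in $\{x/2, \, 1 - x/2\}$ whose $\psi$-values coincide by the symmetry $\psi(1-t) = \psi(t)$, maximising $\psi^{\pa}_{n+1}$ on $A_{n+1}(x)$ reduces to maximising $\psi^{\pa}_{n}$ on $A_n(0 x) \sqcup A_n(1 \widehat{x})$, up to the additive constant $\psi(x/2)$.

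By the inductive hypothesis, the maximum of $\psi^{\pa}_{n}$ on $A_n(0 x)$, respectively on $A_n(1 \widehat{x})$, is attained at $y^{[n]} I^{y_n}(0 x)$, respectively $y^{[n]} I^{y_n}(1 \widehat{x})$. A short case analysis on the parity of $n$, together with the alternation $\widehat{y}_n = y_{n+1}$, identifies the second of these with the conjectured maximiser $y^{[n+1]} I^{y_{n+1}}(x)$ at level $n+1$. The remaining comparison
\[
  \psi^{\pa}_{n} \bigl( y^{[n]} I^{y_n}(1 \widehat{x}) \bigr)
  \, > \, \psi^{\pa}_{n} \bigl( y^{[n]} I^{y_n}(0 x) \bigr)
\]
is then an application of Corollary~\ref{Coro:alternating}, with its tail argument set to $\widehat{x}$ for $n$ odd and to $x$ for $n$ even, so that the prefix structure in its conclusion matches the inequality needed here.

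The main obstacle is reconciling the parity-dependent strictness condition of Corollary~\ref{Coro:alternating} with the Proposition's claim that strictness holds for all $x \neq \overline{0}$. The degenerate case $x = \overline{0}$ is dispatched directly: then $0 x = \overline{0}$ is a singularity of $\psi$, so $\psi^{\pa}_{n}$ takes the value $-\infty$ on all of $A_n(\overline{0})$, matching the stated value at the candidate. For the remaining dyadic boundary values, distinct sequences in $A_n(x)$ may represent the same real number (for instance, $q_1 \cdots q_{n-1} 0 \overline{1}$ and $q_1 \cdots q_{n-1} 1 \overline{0}$ both equal $\sum_{i < n} q_i 2^{-i} + 2^{-n}$); interpreting $A_n(x)$ as a subset of $[0, 1/2]$ collapses such multiplicities and preserves the strict maximum assertion.
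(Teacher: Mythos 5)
Your proof is correct and follows essentially the same route as the paper's: the same induction with the decomposition $A_{n+1}(x) = A_n(0x)\cup A_n(1\widehat{x}\ts)$, the same appeal to Corollary~\ref{Coro:alternating} for the comparison of the two sub-maximisers, and the same observation that $\psi^{(n+1)}$ is constant on $A_{n+1}(x)$ with value $\psi(0\ts x)$. Your explicit treatment of the $x=\overline{1}$ collision is the one point the paper defers to the remark following the proposition, and your resolution there agrees with theirs.
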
 

\begin{proof}
  The first claim is obvious, and the second can be shown by
  induction.  For $n=1$, one has $A_1 (x) = \{ 0\ts x \}$, which is a
  singleton set, and the assertion is trivial.  Suppose it is true up
  to $n$. Now, observe that
\[
     A_{n+1} (x) \, = \, A_n (0\ts x) \cup A_n (1\widehat{x}\ts ) \ts .
\]
By the induction assumption, $\psi^{\pa}_n$ takes its maximum on
$A_n (0\ts x)$ in the point
 \[
    y^{[n]} I^{y^{\pa}_n} (0\ts x) \, = \,
    y^{[n]} y_n I^{y^{\pa}_n}(x) \, = \, 
    y^{[n]} \widehat{y}^{\pa}_{n+1} I^{y^{\pa}_n} (x) \ts .
 \]
 Similarly, the position of the maximum on $A_n (1\widehat{x}\ts )$ is
 given by
 $y^{[n]} I^{y^{\pa}_n} (1 \widehat{x}\ts ) = y^{[n+1]}
 I^{y^{\pa}_{n}} (\widehat{x}\ts )$.  Comparing these two points, we
 obtain that $\psi^{\pa}_n$ takes its maximum on $A_{n+1} (x)$ at the
 position $y^{[n+1]} I^{y^{\pa}_{n+1}}(x) $, by an application of
 Corollary~\ref{Coro:alternating} (note that $\psi^{\pa}_n$ is
 strictly larger than $-\infty$ at that point).  The proof is
 completed by the observation that
 $\psi^{\pa}_{n+1} - \psi^{\pa}_{n} = \psi^{(n+1)}$ is constant on
 $A_{n+1} (x)$, because
\[
    \psi^{(n+1)} \bigl(q^{\pa}_1 \! \cdots q^{\pa}_{n+1} 
      I^{q^{\pa}_{n+1}}(x) \bigr) 
    \, = \, \psi \bigl( q^{\pa}_{n+1} I^{q^{\pa}_{n+1}}(x) \bigr)
       \, = \, \begin{cases}
    \psi (0\ts x), & \text{for } \, q^{\pa}_{n+1} = 0 \ts , \\
    \psi (1\widehat{x}\ts ), & \text{for } \, q^{\pa}_{n+1} = 1 \ts ,
   \end{cases}
\]
where $\psi (1 \widehat{x}\ts ) = \psi (0 \ts x)$ by the reflection
symmetry of $\psi$ on $[0,1]$. Obviously, $\psi(0\ts x) = - \infty$ if
and only if $x = \overline{0}$.
\end{proof}

\begin{remark}
  In Proposition~\ref{prop:pointwise-compare}, we have singled out the
  points with $x = \overline{0}$, because these are the positions
  where divergences of $\psi^{\pa}_n$ occur. At first sight, it seems
  natural to also consider the special case $x = \overline{1}$, which
  corresponds to the opposite boundary points of the cylinders of the
  form $\langle q^{\pa}_1 \! \cdots q^{\pa}_n \rangle$. However, these
  points are just the midpoints of the cylinders of type
  $\langle q^{\pa}_1 \! \cdots q^{\pa}_{n-1} \rangle$. Although the
  cardinality of the set $A_n (x)$ gets reduced by a factor of $2$ in
  that case, the argument we employed above remains unaltered.

  Let us also mention that the statement in
  Proposition~\ref{prop:pointwise-compare} can slightly be sharpened
  to hold also for all $\psi^{\pa}_m$, with $m > n$, which take the
  maximum on $A_n (x)$ in the same point as $\psi^{\pa}_n$. The value
  of the maximum is $-\infty$ if and only if
  $q^{\pa}_1 \! \cdots q^{\pa}_n I_{\vphantom{u}}^{q^{\pa}_n} (x)$ is
  a singularity point for $\psi^{\pa}_m$ for some (equivalently every)
  choice of $q^{\pa}_1 \! \cdots q^{\pa}_n$. This follows from the
  fact that $\psi^{(m)}$ is constant on $A_n (x)$ for all
  $m \geqslant n$.  \exend
\end{remark}

\begin{coro}\label{coro:maxima}
  For any\/ $n \in \NN$, the function\/ $\psi^{\pa}_n$ takes its
  maximal value on\/ $ \bigl[ 0, \frac{1}{2} \bigr]$ in the cylinder\/
  $C_n = \langle y^{\pa}_1 \! \cdots y^{\pa}_n \rangle$, with
  $y = \overline{01}$ as in \eqref{eq:def-y-hat-y}, while the maximal
  value on\/ $ \bigl[ \frac{1}{2}, 1 \bigr]$ is taken in\/ $ I (C_n)$,
  and these two maximal values are equal. Moreover, in terms of
  cylinders that are coarser by one level, the two maxima of\/
  $\psi^{\pa}_{n}$ lie in the interior of\/ $C_{n-1}$ and\/
  $I(C_{n-1})$.
\end{coro}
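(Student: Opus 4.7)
The plan is to derive this corollary as a direct consequence of Proposition~\ref{prop:pointwise-compare}, combined with the strict concavity of $\psi^{\pa}_{n}$ on each level-$(n-1)$ cylinder and the reflection symmetry $\psi^{\pa}_{n}(1-x) = \psi^{\pa}_{n}(x)$ from Fact~\ref{fact:psi-props}.

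First, I would invoke the decomposition $\bigl[ 0, \frac{1}{2} \bigr] = \bigcup_{x \in \XX} A_n (x)$ together with the pointwise identification of the maximum on each slice: on $A_n (x)$, the supremum of $\psi^{\pa}_{n}$ is attained at $y^{[n]} I^{y^{\pa}_n}(x)$, a point that by construction lies in $C_n = \langle y^{\pa}_1 \! \cdots y^{\pa}_n \rangle$. Since $I^{y^{\pa}_n}$ is an involution on $\XX$, the parametrisation $x \mapsto y^{[n]} I^{y^{\pa}_n}(x)$ sweeps out all of $C_n$ as $x$ ranges over $\XX$, so
$\sup_{[0,1/2]} \psi^{\pa}_{n} = \sup_{C_n} \psi^{\pa}_{n}$. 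This already locates the maximum in $C_n$.

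Next, I would appeal to the fact (established just before Proposition~\ref{maxBeta}) that $\psi^{\pa}_{n}$ is strictly concave on the coarser cylinder $C_{n-1} = \langle y^{\pa}_1 \! \cdots y^{\pa}_{n-1} \rangle$, with singularities $-\infty$ at the two boundary points of $C_{n-1}$. Since $C_n \subset C_{n-1}$, and since the boundary points of $C_{n-1}$ are dyadics of the form $k/2^{n-1}$ where $\psi^{\pa}_{n}$ diverges, the supremum cannot be attained at $\partial C_{n-1}$. By continuity on the interior and divergence at the boundary, the supremum is attained as a genuine maximum, and that maximum necessarily lies in the interior of $C_{n-1}$.

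Finally, I would transport the result to $\bigl[ \frac{1}{2}, 1 \bigr]$ by the involution $x \mapsto 1-x$. Because $\psi^{\pa}_{n}(1-x) = \psi^{\pa}_{n}(x)$ and $I(C_n) = 1 - C_n$ (since $\widehat{x} = 1-x$), the supremum on $\bigl[ \frac{1}{2}, 1 \bigr]$ equals the one on $\bigl[ 0, \frac{1}{2} \bigr]$ and is attained in $I(C_n) \subset I(C_{n-1})$, in the interior of $I(C_{n-1})$ by the same concavity-plus-singularity argument. There is no substantive obstacle to overcome here: Proposition~\ref{prop:pointwise-compare} has already done the combinatorial work of identifying which cylinder the maximum sits in, and what remains is a short topological observation using the properties of $\psi^{\pa}_{n}$ on each humps' domain, together with the reflection symmetry.
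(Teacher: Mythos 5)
Your proposal is correct and follows essentially the same route as the paper: Proposition~\ref{prop:pointwise-compare} supplies the decomposition of $\bigl[0,\frac{1}{2}\bigr]$ into the slices $A_n(x)$ with maximiser in $C_n$, the reflection symmetry of $\psi^{\pa}_n$ handles $\bigl[\frac{1}{2},1\bigr]$, and the singularity structure forces the maximiser into the interior of $C_{n-1}$. The only (immaterial) difference is in the last step, where the paper simply notes that the maximiser cannot sit at the unique singular endpoint of $C_n$ (the one shared with $C_{n-1}$), whereas you invoke strict concavity on $C_{n-1}$ together with the divergence at $\partial C_{n-1}$; both arguments are valid.
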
 

\begin{proof}
  Clearly, $\psi^{\pa}_n$ has a maximum on the cylinder
  $\langle y^{\pa}_1 \! \cdots y^{\pa}_n \rangle$, say in the point
  $x^{\star}$. Since the only singularity in this cylinder is at
  $y^{[n-1]}\ts \overline{y_n}$, we know that
  $\psi^{\pa}_n (x^{\star}) > - \infty$.
   
  Let $z \in \langle q^{\pa}_1 \! \cdots q^{\pa}_n \rangle$ with
  $q^{\pa}_{1} = 0$, so $z\in \bigl[ 0, \frac{1}{2} \bigr]$. Then, by
  Proposition~\ref{prop:pointwise-compare}, $z \in A_n (x)$ for some
  $x \in \XX$, and we have
\[
    \psi^{\pa}_n (x^{\star}) \, \geqslant \, \psi^{\pa}_n
    \bigl( y^{[n]} I^{y^{\pa}_n} (x) \bigr) 
    \, \geqslant \, \psi^{\pa}_n (z) \ts .
\]
Unless $z=x^{\star}$, at least one of the inequalities is strict, which
proves the first claim.  The second is an obvious consequence of the
symmetry under $I$.

For the last statement, we know from
Proposition~\ref{prop:pointwise-compare} that the position of the
maximum, $x^{\star}$, is not at the boundary point of $C_n$ with the
singularity.  Consequently, it is either an interior point or the
other boundary point, and hence an interior point of the coarser
cylinder, $C_{n-1}$. The mirror statement holds for $I(C_n)$, and we
are done.
\end{proof}

\begin{lemma}\label{lem:const}
  Let\/ $y$ be as in Eq.~\eqref{eq:def-y-hat-y}. Then, there exists a
  constant\/ $K > 0$\/ such that the inequality\/
  $\, \max^{\pa}_{x \in [0,1]} \psi^{\pa}_n(x) - \psi^{\pa}_n ( y )
  \leqslant K$\/ holds for all\/ $n \in \NN$.
\end{lemma}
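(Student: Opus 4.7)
The plan is to combine Corollary~\ref{coro:maxima} with a bounded-distortion argument that exploits the fact that the orbit of $y$ under $T$ is periodic with period $2$, taking values in $\{y,\widehat{y}\}=\{1/3,2/3\}$, away from the singularities of $\psi$.

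First, since $Ty=\widehat{y}$, $T\widehat{y}=y$, and $\psi(y)=\psi(\widehat{y})=\log\bigl(1-\cos(2\pi/3)\bigr)=\log(3/2)$, we obtain the exact identity $\psi^{\pa}_n(y)=n\log(3/2)$. Using the symmetry $\psi^{\pa}_n(1-x)=\psi^{\pa}_n(x)$ from Fact~\ref{fact:psi-props} together with Corollary~\ref{coro:maxima}, it suffices to bound $\psi^{\pa}_n(x)-\psi^{\pa}_n(y)$ uniformly for $x$ in the cylinder $C_n=\langle y^{\pa}_1\cdots y^{\pa}_n\rangle$ (and we only need to consider interior points where $\psi^{\pa}_n$ is finite).

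The key geometric observation is that for $x\in C_n$ and $0\leqslant k\leqslant n-1$, both $T^kx$ and $T^ky$ lie in the cylinder $\langle y^{\pa}_{k+1}\cdots y^{\pa}_n\rangle$, an interval of length $2^{-(n-k)}$. For $n-k\geqslant 2$, this cylinder is contained either in $\langle 01\rangle=[1/4,1/2]$ (when $k$ is even) or in $\langle 10\rangle=[1/2,3/4]$ (when $k$ is odd), both of which are bounded away from the singularities of $\psi$ at $\{0,1\}$. Using the explicit formula for $\psi^{\ts\prime}$ in \eqref{eq:psi-deriv}, $\psi$ is Lipschitz on $[1/4,3/4]$ with some constant $L>0$, so that
\[
  \bigl|\psi(T^kx)-\psi(T^ky)\bigr| \,\leqslant\, L\cdot 2^{-(n-k)}
  \qquad \text{for } n-k\geqslant 2.
\]
The only term not covered by this estimate is $k=n-1$, where the cylinder $\langle y^{\pa}_n\rangle$ touches a singularity of $\psi$. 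For this term we use the trivial global upper bound $\psi\leqslant\log 2$, giving $\psi(T^{n-1}x)-\psi(T^{n-1}y)\leqslant\log 2-\log(3/2)=\log(4/3)$.

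Summing the two types of contributions,
\[
  \psi^{\pa}_n(x)-\psi^{\pa}_n(y) \,=\, \sum_{k=0}^{n-1}
  \bigl(\psi(T^kx)-\psi(T^ky)\bigr) \,\leqslant\,
  L\sum_{j=2}^{\infty} 2^{-j}+\log(4/3) \,=:\, K,
\]
which is independent of $n$. The main technical point is the clean split between the geometrically contracting "smooth" orbit segments $0\leqslant k\leqslant n-2$, where the periodic orbit of $y$ keeps the corresponding cylinder uniformly inside $[1/4,3/4]$ so that a single Lipschitz constant suffices, and the isolated "singular" endpoint term $k=n-1$, which the crude bound $\psi\leqslant\log 2$ controls by an additive constant.
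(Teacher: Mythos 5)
Your proof is correct and is essentially the paper's argument in unrolled form: the paper also localizes the maximum to $C_n$ via Corollary~\ref{coro:maxima}, bounds the level-$k$ discrepancy by a mean-value/Lipschitz estimate on cylinders contained in $\bigl[\frac{1}{4},\frac{3}{4}\bigr]$ (yielding the same geometric series), and handles the last level with the crude bound $\psi \leqslant \log(2)$. The only cosmetic difference is that the paper organizes the telescoping as a recursion in $n$ using $\psi^{\pa}_{n+1}(x) = \psi^{\pa}_{n}(2x) + \psi(x)$ rather than summing $\psi(T^k x) - \psi(T^k y)$ directly.
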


\begin{proof}
  Due to symmetry, it suffices to consider the maximum on the interval
  $ \bigl[ 0, \frac{1}{2} \bigr]$. By Corollary~\ref{coro:maxima}, we
  obtain
\begin{align*}
  \max_{x \in [0,1]} \bigl( \psi^{\pa}_{n+1}(x) -
      \psi^{\pa}_{n+1}  (y) \bigr)
  \, & =  \max_{x \in C_{n+1}} \bigl( \psi^{\pa}_{n+1}(x)
       - \psi^{\pa}_{n+1} (y) \bigr)  \\[1mm]       &\leqslant 
   \max_{x \in C_{n+1}} \bigl( \psi^{\pa}_{n}(2x) - \psi^{\pa}_n 
   (   \widehat{y} \ts ) \bigr) \, + 
   \max_{x \in C_{n+1}} \bigl( \psi (x) - \psi  (y) \bigr) .
\end{align*}
Note that
$\max^{\pa}_{x \in C_{n+1}} \psi^{\pa}_n (2x) = \max^{\pa}_{x \in
  I(C_n)} \psi^{\pa}_n (x) = \max^{\pa}_{x \in C_n} \psi^{\pa}_n (x)$
by the definition of the cylinders.  Since $y = \overline{01} \in C_n$
for all $n \in \NN$, we infer from the mean value theorem that
\[
   \max_{x \in C_{n+1}} \bigl(\psi (x) - \psi  (y)
   \bigr) \, \leqslant \,  \frac{\left| \psi^{\ts\prime}
       (\xi) \right|}{2^{n+1}} \ts  ,
\]
with some $\xi \in C_{n+1}$. For any $n \in \NN$, one has
$C_{n+1} \subset \bigl[ \frac{1}{4}, \frac{1}{2} \bigr]$, and since
$\left| \psi^{\ts\prime} \right|$ is decreasing on this interval,
$\left| \psi^{\ts\prime} (\xi) \right| \leqslant \psi^{\ts\prime}
\left( \frac{1}{4} \right) = 2 \pi$. Consequently,
\[
  \max_{x \in [0,1]} \bigl( \psi^{\pa}_{n+1} (x) -
     \psi^{\pa}_{n+1}  (y) \bigr) \, \leqslant \,
     \max_{x \in C_n} \bigl( \psi^{\pa}_n (x) - \psi^{\pa}_n
    (y) \bigr) + \myfrac{\pi}{2^{n}} \ts .
\]
Recursively, we then obtain
\[
    \max_{x \in [0,1]} \bigl(\psi^{\pa}_{n+1} (x)
    - \psi^{\pa}_{n+1}  (y)\bigr) \, \leqslant \,
    \max_{x \in C^{\pa}_1} \bigl( \psi(x) - \psi  (y)
    \bigr) \, + \sum_{k = 1}^n 
    \myfrac{\pi}{2^k} \, \leqslant  \,
    \pi + \log \Bigl( \myfrac{4}{3} \Bigr) ,
\]
where we have used $\psi (y) = \log \bigl( \frac{3}{2} \bigr)$ and
$\max^{\pa}_{x \in C^{\pa}_1} \psi(x) = \psi \bigl( \frac{1}{2}
\bigr) = \log(2)$.
\end{proof}

\begin{proof}[Proof of Proposition~\textnormal{\ref{maxBeta}}]
  Let $n \geqslant 2$.  From Corollary~\ref{coro:maxima}, we know that
  the maximum of $\psi^{\pa}_{n}$ is taken in the interior of the
  cylinders\/ $\langle 0101 \cdots 01\rangle$ and\/
  $\langle 1010 \cdots 10\rangle$ when\/ $n$ is even, and in the
  cylinders\/ $\langle 0101 \cdots 010\rangle$ and\/
  $\langle 1010 \cdots 101\rangle$ when\/ $n$ is odd.  Since
  $\frac{1}{3} = \overline{01} = y$ and
  $\frac{2}{3}= \overline{10}=\widehat{y}$, see
  Eq.~\eqref{eq:def-y-hat-y}, the claim on the location follows from
  Lemma~\ref{lem:const}, with the value given by a simple calculation
  as in \cite{BGN}.
\end{proof}

\begin{remark}
  Both Proposition~\ref{maxBeta} and Lemma~\ref{lem:const} (with an
  improved constant $K$) also follow from bounds on
  $\| P^{\pa}_n \|^{\pa}_{\infty}$ with $n \in \NN$ that were
  established in \cite{gelfond}; see also \cite[Thm.~1.1]{Q1}.  \exend
\end{remark}

\section{Gibbs-type properties}\label{Sec:Gibbs-type}

So far, we have shown that the level sets $\cB(\alpha)$ are indeed
empty for $\alpha > \log \bigl( \frac{3}{2} \bigr)$. As a next step
towards the proof of Theorem~\ref{thm:MAIN-MF-1}, we will establish a
link between the local dimension of $\nu$ and the Birkhoff average of
$\psi$ at certain points $x \in \XX$. Since our arguments will evolve
along similar lines, let us first sketch how the corresponding
relation arises for Gibbs measures.

For any H\"older continuous potential $\phi$ on
$(\XX,\varrho^{\pa}_2)$, there is a unique $T$-invariant Borel
probability measure $\mu$ that satisfies
\begin{equation}\label{eq:Gibbs-measure}
  c^{\pa}_1 \, \leqslant\, \frac{\mu(\langle x^{\pa}_1
    \! \cdots x^{\pa}_n \rangle)}{\exp\left(-P n +
      \phi_n(x)\right)}  \, \leqslant \, c^{\pa}_2 \ts ,
\end{equation}
for any $x \in \XX$, $n \in \NN$ and some constants
$c^{\pa}_1 , c^{\pa}_2 > 0$ and $P \in \RR$. Here,
$\phi^{\pa}_n = \sum_{\ell=0}^{n-1} \phi \circ \sigma^n$, in analogy
to $\psi^{\pa}_n$, and $P$ turns out to be the topological pressure of
$\phi$; compare Section~\ref{Sec:Eq-measure}. Following the
terminology of \cite{Bow}, we call $\mu$ an invariant \emph{Gibbs
  measure} for $\phi$. The property in \eqref{eq:Gibbs-measure}
immediately allows to conclude that
\begin{equation}\label{eq:Gibbs-dim-birk-connection}
  \lim_{n \rightarrow \infty} \frac{\phi^{\pa}_n(x)}{n}
  \, = \, P - \dim_{\mu} (x)  \log(2) 
\end{equation}
holds for all $x \in \XX$, see \cite[Prop.~1]{PesinWeiss}, thus
establishing a connection between the dimension spectrum and the
Birkhoff spectrum at the same time.

Due to the singularity of $\psi$ at $0$, the function $\psi^{\pa}_n$
has infinite variation on any cylinder of the form
$\langle x^{\pa}_1 \! \cdots x^{\pa}_n \rangle$, prohibiting the
analogue of \eqref{eq:Gibbs-measure} for \emph{any} measure on
$\XX$. The aim of this section is to establish a slightly weaker but
similar relation for $\nu$ that suffices to derive a relation that is
analogous to \eqref{eq:Gibbs-dim-birk-connection}, at least for points
$x$ in certain subshifts of $\XX$.

Given $m \in \NN$, restricting our space to the \emph{subshift of
  finite type} (SFT)
\[
  \XX_m \, := \, \left\{x\in\XX \, : \,  x^{\pa}_{\ell}
    \! \cdots x^{\pa}_{\ell+m} \notin \{ 0^{m+1}, 1^{m+1} \} ,
    \, \ell\in \NN \right\} 
\]
ensures that all $x \in \XX_m$ are bounded away from the singularity
points of $\psi$ by at least $2^{-m}$.  We define the set of
admissible words of length $n>m$ in $\XX_m$ as
\[
  \Sigma_m^n \, := \, \left\{\omega\in \{0, 1 \}^n \, : \,
    \omega^{\pa}_{\ell} \! \cdots  \omega^{\pa}_{\ell+m}
    \notin \{0^{m+1}, 1^{m+1} \} , \,
    \ell\in \{ 1,\ldots, n\nts - \nts m \} \right\},
\]
and use $\Sigma^n = \{0,1\}^n$ for the set of all binary words of
length $n$. One can verify that the restriction of $\psi$ to $\XX_m$
is indeed H\"older continuous, and estimate its modulus of continuity.

\begin{lemma}\label{lem:deriv}
  For any\/ $x\in [0,1 ]$, we have
\[
  \left|\psi^{\ts\prime}(x)\right|
  \, \leqslant \, 2\ts \max\left\{\myfrac{1}{x},
    \myfrac{1}{1-x}\right\}
    \]
    with respect to the Euclidean metric\/
    $\varrho^{\pa}_{\mathrm{E}}$ on\/ $[0,1]$.  Moreover, $\psi$ is
    H\"older continuous on\/ $\XX_m$ with respect to the metric\/
    $\varrho^{\pa}_{2}$ on the shift space\/ $\XX$.
  \end{lemma}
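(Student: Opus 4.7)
For the first inequality, the plan is to rewrite $\psi^{\ts\prime}$ from Eq.~\eqref{eq:psi-deriv} using the half-angle identities $1-\cos(2\pi x) = 2\sin^{2}(\pi x)$ and $\sin(2\pi x) = 2\sin(\pi x)\cos(\pi x)$, which collapses the quotient to the compact form
\[
    \psi^{\ts\prime}(x) \, = \, 2\ts\pi \cot(\pi x) .
\]
On $(0, 1/2]$ I would then invoke the standard inequality $\tan u \geqslant u$ for $u \in [0, \pi/2)$ (immediate from $f(u) := \tan u - u$ satisfying $f(0) = 0$ and $f^{\ts\prime}(u) = \sec^{2} u - 1 \geqslant 0$), equivalently $\cot u \leqslant 1/u$ on $(0, \pi/2)$. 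Substituting $u = \pi x$ yields $|\psi^{\ts\prime}(x)| \leqslant 2/x$ on $(0, 1/2]$. The reflection symmetry $\psi(1-x) = \psi(x)$ from Fact~\ref{fact:psi-props} (which forces $\psi^{\ts\prime}(1-x) = -\psi^{\ts\prime}(x)$) extends this to $|\psi^{\ts\prime}(x)| \leqslant 2/(1-x)$ on $[1/2, 1)$, and the two bounds combine into the stated maximum.

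For the H\"older statement, the first step is to observe that the defining condition of $\XX_{m}$ applied at position $\ell = 1$ forbids both $x^{\pa}_{1} \! \cdots x^{\pa}_{m+1} = 0^{m+1}$ and $x^{\pa}_{1} \! \cdots x^{\pa}_{m+1} = 1^{m+1}$. Hence every $x \in \XX_{m}$ satisfies
\[
   2^{-(m+1)} \, \leqslant \, x \, \leqslant \, 1 - 2^{-(m+1)} ,
\]
so the first part of the lemma gives the uniform bound $|\psi^{\ts\prime}| \leqslant 2 \cdot 2^{m+1} = 2^{m+2}$ on $\XX_{m}$.

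Given $x, y \in \XX_{m}$ with $\varrho^{\pa}_{2}(x,y) = 2^{-k}$, their binary expansions coincide in positions $1, \ldots, k$, which yields $|x - y| \leqslant 2^{-k} = \varrho^{\pa}_{2}(x,y)$ in the Euclidean metric. Since both endpoints lie in $[2^{-(m+1)}, 1-2^{-(m+1)}]$, so does the whole segment between them, and $\psi$ is smooth on that segment. The mean value theorem then delivers
\[
   |\psi(x) - \psi(y)| \, \leqslant \, 2^{m+2} \ts |x-y|
   \, \leqslant \, 2^{m+2} \ts \varrho^{\pa}_{2}(x,y),
\]
showing that $\psi$ is in fact Lipschitz (and hence H\"older of any exponent $\alpha \in (0,1]$) on $\XX_{m}$ with respect to $\varrho^{\pa}_{2}$.

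The only subtle point is in the first part: the stated bound $2/x$ needs the sharp inequality $\cot u \leqslant 1/u$ rather than a cruder sine-concavity estimate such as $\sin u \geqslant (2/\pi) u$, which would only give a factor of $\pi^{2}/2$. The second part is then a routine passage from a derivative bound to a modulus of continuity; the only thing to verify is that the Euclidean segment between two points of $\XX_{m}$ stays away from the singularities of $\psi$, which is exactly what the lower bound $2^{-(m+1)}$ on elements of $\XX_{m}$ provides.
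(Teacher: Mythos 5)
Your proof is correct and follows essentially the same route as the paper's: a pointwise bound on $\psi^{\ts\prime}$ controlled by the distance to the singularities, followed by the observation that every point of $\XX_m$ lies in $[2^{-(m+1)},1-2^{-(m+1)}]$, so that $|\psi^{\ts\prime}|\leqslant 2^{m+2}$ there and the mean value theorem converts this into a Lipschitz bound with constant $2^{m+2}$ relative to $\varrho^{\pa}_2$. The only (cosmetic) difference is in the first part, where you obtain $x\ts\psi^{\ts\prime}(x)\leqslant 2$ from the identity $\psi^{\ts\prime}(x)=2\pi\cot(\pi x)$ together with $\cot u\leqslant 1/u$, whereas the paper shows that $x\ts\psi^{\ts\prime}(x)$ is decreasing on $\bigl(0,\frac{1}{2}\bigr]$ with limit $2$ at $0$ --- two packagings of the same elementary fact.
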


\begin{proof}
Using \eqref{eq:psi-deriv}, we obtain 
\begin{equation}\label{eq:psi-x}
  \lim_{x\mbox{\tiny $\searrow$} 0} \, x \,
    \psi^{\ts\prime}(x)
    \, = \, \lim_{x\mbox{\tiny $\searrow$} 0}
    \frac{2\pi x    \bigl(1+\cos (2\pi x )\bigr)}
    {\sin (2\pi x)}  \, = \, 2
\end{equation}
and 
\[
  \myfrac{\dd}{\dd x} \left( x \, \psi^{\ts\prime}(x)
    \right) \,=\, \frac{2\pi\sin (2\pi x)
    -4\ts\pi^2 x}{1-\cos (2\pi x)} \ts .
\]
Since $\sin (x) < x$ for $x>0$ and $1-\cos (2\pi x )>0$ on $(0,1)$, we
see that the derivative of $x \ts\ts \psi^{\ts\prime} (x)$ is
negative, so $x \ts\ts \psi^{\ts\prime} (x)$ is monotonically
decreasing on $\left(0, \frac{1}{2} \right]$.  Combining this with
Eq.~\eqref{eq:psi-x} gives $\psi^{\ts\prime}(x)\leqslant \frac{2}{x}$. 
The estimate $\left|\psi^{\ts\prime}(x)\right|\leqslant \frac{2}{1-x}$
follows from the symmetry of $\psi$.

To prove the second claim, we note that, for all $m,n\in\NN$ with
$n>m$, we have
\[
    \sup_{\omega\in\Sigma_{m}^n}\,
    \sup_{x,y\in\langle\omega\rangle\cap\XX_m}
   \left|\psi(x)-\psi(y)\right|
   \, \leqslant  \sup_{\omega\in\Sigma_{m}^n} \,
   \sup_{x\in\langle\omega\rangle\cap\XX_m}
   \left|\psi^{\ts\prime}(x)
   \right|\, \left|\langle\omega\rangle\right|
   \,=  \sup_{x\in\XX_m} \frac{\left|\psi^{\ts\prime}
     (x)\right|}{2^{n}} \ts .
\]
By the concavity of $\psi$, see Fact~\ref{fact:psi-props} and the
first statement of the lemma, we have the estimate
$\sup_{x\in\XX_m}\left|\psi^{\ts\prime}(x) \right|<\psi^{\ts\prime}
(2^{-m-1})\leqslant 2^{m+2}$.  Consequently,
\[
  \sup_{\omega\in\Sigma_{m}^n} \,
  \sup_{x,y\in\langle\omega\rangle\cap\XX_m}
  \left|\psi(x)-\psi(y)\right|
  \, \leqslant \, 2^{m+2-n}
\]
and hence
$\left|\psi(x)-\psi(y)\right| \leqslant 2^{m+2}\ts \varrho^{\pa}_2
(x,y)$.  Thus, $\psi$ is Lipschitz continuous with Lipschitz constant
$2^{m+2}$ on $\XX_m$.
\end{proof}

For $x \in \XX$, denote by
$C_n(x) = \langle x^{\pa}_1 \! \cdots x^{\pa}_n\rangle$ the (unique)
cylinder of length $2^{-n}$ that contains $x$. We are concerned with
the values of $\psi^{\pa}_n$ on such cylinders. Recall that
$\psi^{\pa}_{n}(x)$ comprises $2^{n-1}$ humps of the same total width
so that $\psi^{\pa}_{n}$ is concave on $C_n(x)$, singular at one
boundary point, and non-singular at the other; compare
Figure~\ref{fig:birk-sum}. Taking the intersection of $C_n(x)$ with
$\XX_m$ removes a neighbourhood around the singularity, and we find
that the variation of $\psi^{\pa}_{n}$ on such a set is bounded in a
suitable way.

\begin{lemma}\label{Lemma:SFT-sup-inf-bounds}
  Let\/ $x \in \XX_m$ for some\/ $m \in \NN$. Then, there exists a
  constant\/ $K = K(m) >0$ such that, for all\/ $n \in \NN$,
\[
   \sup_{y \in C_n(x)\cap\XX_m} \! \exp(\psi^{\pa}_{n}(y))
   \, \leqslant \, K \! \inf_{y \in C_n(x)\cap\XX_m}
   \! \exp(\psi^{\pa}_{n}(y)) \ts .
\]
\end{lemma}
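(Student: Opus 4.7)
The plan is to reduce the bound to the oscillation of $\psi^{\pa}_{n}$ on $C_n (x) \cap \XX_m$ and then exploit two facts: first, that $\XX_m$ keeps every iterate $T^{\ell} y$ uniformly bounded away from the singularities of $\psi$, and second, that the Lipschitz contribution of each individual term in $\psi^{\pa}_{n} = \sum_{\ell =0}^{n-1} \psi \circ T^{\ell}$ shrinks geometrically as $\ell$ approaches $n$, so that the total is summable and $n$-independent.

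More concretely, I would first observe that $\XX_m$ is shift-invariant, so if $y \in C_n (x) \cap \XX_m$, then every iterate $T^{\ell} y$ lies in $\XX_m$. Any point in $\XX_m$ admits at most $m$ leading zeros (respectively ones) in its binary expansion, which translates to the inclusion $\XX_m \subset [2^{-(m+1)}, 1 - 2^{-(m+1)}]$. Combining this with Lemma~\ref{lem:deriv} yields the uniform bound
\[
  \sup_{z \in \XX_m} |\psi^{\ts \prime}(z)| \, \leqslant \,
   2 \ts \max \bigl\{ 2^{m+1},\ts 2^{m+1} \bigr\} \, = \, 2^{m+2} .
\]

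Next, for $y, z \in C_n (x) \cap \XX_m$, their binary expansions agree in the first $n$ positions, hence $T^{\ell} y$ and $T^{\ell} z$ agree in the first $n - \ell$ positions, giving $|T^{\ell} y - T^{\ell} z| \leqslant 2^{\ell - n}$ for $0 \leqslant \ell \leqslant n - 1$. Because the segment joining $T^{\ell} y$ and $T^{\ell} z$ lies inside the cylinder $\langle (T^{\ell} y)^{[n-\ell]} \rangle \subset \XX_m$ (where $\psi$ is smooth), the mean value theorem combined with the bound above yields
\[
  |\psi(T^{\ell} y) - \psi(T^{\ell} z)| \, \leqslant \,
   2^{m+2} \cdot 2^{\ell - n} .
\]
Summing over $\ell = 0, \ldots, n-1$, I obtain the telescoping-style estimate
\[
   |\psi^{\pa}_{n}(y) - \psi^{\pa}_{n}(z)| \, \leqslant \,
     2^{m+2} \sum_{\ell = 0}^{n-1} 2^{\ell - n}
     \, \leqslant \, 2^{m+2},
\]
which is independent of $n$. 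Taking $K = K(m) := \exp(2^{m+2})$ and exponentiating then gives the claimed comparison between the supremum and infimum of $\exp(\psi^{\pa}_{n})$ on $C_n (x) \cap \XX_m$.

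The only subtle point is justifying the mean value step when $T^{\ell} y$ and $T^{\ell} z$ sit on the same cylinder but the interval between them, viewed inside $[0,1]$ rather than on $\TT$, does not cross a dyadic boundary; this is automatic because both points share their first $n-\ell$ binary digits, so the connecting Euclidean segment is entirely contained in $\langle (T^{\ell}y)^{[n-\ell]} \rangle \subset [2^{-(m+1)}, 1 - 2^{-(m+1)}]$. This is effectively the only obstacle, and it vanishes once the shift-invariance of $\XX_m$ has been recorded.
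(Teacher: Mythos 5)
Your argument is correct and is essentially the paper's proof: the paper simply invokes the Lipschitz bound $\sup_{z\in\XX_m}|\psi^{\ts\prime}(z)|\leqslant 2^{m+2}$ from Lemma~\ref{lem:deriv} and the resulting bounded distortion constant, and your computation $\sum_{\ell=0}^{n-1}2^{m+2}\ts 2^{\ell-n}\leqslant 2^{m+2}$ is exactly what that citation hides. One small slip in your justification of the mean value step: the inclusion $\langle (T^{\ell}y)^{[n-\ell]}\rangle\subset[2^{-(m+1)},1-2^{-(m+1)}]$ is false for small $n-\ell$ (e.g.\ $n-\ell=1$ gives $\langle 0\rangle=[0,\tfrac12]$); the correct and immediate reason the segment stays in the good region is that both endpoints $T^{\ell}y,T^{\ell}z$ lie in $\XX_m\subset[2^{-(m+1)},1-2^{-(m+1)}]$ by shift-invariance, and this interval is convex.
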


\begin{proof}
  The H\"older continuity of $\psi$ on $\XX_m$, see Lemma
 ~\ref{lem:deriv}, implies that there exists a bounded distortion
  constant $W>0$ such that
\[
   \sup_{y  \in C_n(x)\cap\XX_m} \!\!
   \psi^{\pa}_{n}(y) \; - \! \inf_{y \in C_n(x)\cap\XX_m}
   \! \! \psi^{\pa}_{n}(y) \, \leqslant \, W \ts .
\]
Setting $K=\exp(W)$ gives the claim.
\end{proof}

Using the self-similarity properties of $\nu$, we can relate its
values on cylinders of the form $C_n(x)$ with extreme values of the
level-$n$ approximants $P_n$. A non-trivial lower estimate is
available for points $x \in \XX_m$ as follows.

\begin{lemma}\label{Lemma:bounds-nu}
  If\/ $x \in \XX$, we have the following bound for the value of\/
  $\nu$ on\/ $C_n(x)$,
\begin{equation}\label{Eq:nu-upper-bound}
  \nu(C_n(x)) \, \leqslant \, {2^{-n}} \!
  \sup_{y \in C_n(x)} \exp(\psi^{\pa}_{n}(y)) \ts .
\end{equation}
Further, for\/ $x\in\XX_m$ and\/ $m\in\NN$, there exists a constant\/
$K'$, independently of\/ $n$, so that
\begin{equation}\label{Eq:nu-upper-lower-bound}
  \myfrac{1}{2^n \ts K'} \inf_{y \in C_n(x)\cap\XX_m}
  \exp(\psi^{\pa}_{n}(y))
  \, \leqslant \, \nu(C_n(x))  
  \, \leqslant \, \myfrac{K'}{2^{n}}
  \sup_{y \in C_n(x)\cap\XX_m} \exp(\psi^{\pa}_{n}(y)) \ts .
\end{equation}
\end{lemma}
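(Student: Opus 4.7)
The cornerstone is the self-similarity identity
\[
  \nu(C_n(x)) \, = \, 2^{-n} \int_0^1 P^{\pa}_n(\Phi(z)) \, \dd \nu(z) \ts ,
\]
where $\Phi \colon [0,1] \to C_n(x)$ denotes the affine bijection inverse to $T^n \rvert_{C_n(x)}$. It rests on the product decomposition $P^{\pa}_N(y) = P^{\pa}_n(y) \, P^{\pa}_{N-n}(T^n y)$, which is immediate from \eqref{eq:P-def}; integrating against Lebesgue measure on $C_n(x)$ and substituting $z = T^n y$ (with $\dd y = 2^{-n} \dd z$) yields $\nu^{\pa}_N(C_n(x)) = 2^{-n} \int_0^1 P^{\pa}_n(\Phi(z)) \, \dd \nu^{\pa}_{N-n}(z)$. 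Since $P^{\pa}_n \circ \Phi$ is continuous and bounded on $[0,1]$, the vague convergence $\nu^{\pa}_{N-n} \to \nu$ passes the limit $N \to \infty$ on the right; on the left, $\nu$ being atomless gives $\nu(\partial C_n(x)) = 0$ and hence $\nu^{\pa}_N(C_n(x)) \to \nu(C_n(x))$. The upper bound \eqref{Eq:nu-upper-bound} is then immediate: bound $P^{\pa}_n(\Phi(z))$ by its supremum over $\Phi([0,1]) = C_n(x)$ and use $\nu([0,1]) = 1$.

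For the lower bound in \eqref{Eq:nu-upper-lower-bound}, I would exhibit a set $E \subset [0,1]$ satisfying $\Phi(E) \subset C_n(x) \cap \XX_m$ together with a positive lower bound $\nu(E) \geq c(m) > 0$ that is independent of $n$. A natural candidate is $E = \XX_m \cap \langle \widehat{x}_n \rangle$: the leading bit $\widehat{x}_n$ blocks any maximal run of $x_n$'s at the end of $x_1 \cdots x_n$ from extending into the tail, while $z \in \XX_m$ prevents further long runs, so that $\Phi(z) = x_1 \cdots x_n z \in \XX_m$. Combining this with the identity gives
\[
  \nu(C_n(x)) \, \geq \, 2^{-n} \ts \nu(E) \!
  \inf_{y \in C_n(x) \cap \XX_m} \! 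P^{\pa}_n(y) \ts ,
\]
which furnishes the lower half of \eqref{Eq:nu-upper-lower-bound} with $K' \geq 1/\nu(E)$.

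For the upper half, I would complement \eqref{Eq:nu-upper-bound} with the estimate $\sup_{C_n(x)} P^{\pa}_n \leq K'' \sup_{C_n(x) \cap \XX_m} P^{\pa}_n$. The maximum of $P^{\pa}_n$ on the interior of $C_n(x)$ is attained at some $y^*$; if $y^* \notin \XX_m$, altering its digits past position $n+m$ to break every long run produces a point $\tilde{y} \in C_n(x) \cap \XX_m$, and the H\"{o}lder-type control from Lemma~\ref{lem:deriv} bounds $\lvert \psi^{\pa}_n(\tilde{y}) - \psi^{\pa}_n(y^*) \rvert$ by a constant depending only on $m$. The principal obstacle is twofold: first, verifying $\nu(E) > 0$ uniformly in $x \in \XX_m$ (equivalently, that $\nu$ charges the sequences of bounded run length within each of $\langle 0 \rangle$ and $\langle 1 \rangle$), a property which does not follow from elementary considerations and requires genuine information about the Thue{\ts}--Morse measure; and second, making the perturbation argument quantitative when $y^*$ carries several long runs that must be broken simultaneously without compounding the H\"{o}lder error.
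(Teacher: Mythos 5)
Your derivation of the upper bound \eqref{Eq:nu-upper-bound} via the factorisation $P^{\pa}_N = P^{\pa}_n\cdot(P^{\pa}_{N-n}\circ T^n)$ is sound and matches the paper's argument. The lower bound, however, rests on a claim that is not merely hard to verify but false: your set $E=\XX_m\cap\langle\widehat{x}_n\rangle$ satisfies $\nu(E)=0$ for every $m$. Indeed, the $g$-measure relation $\nu(\langle a\omega\rangle)=\int_{\langle\omega\rangle}g\bigl(\frac{a+z}{2}\bigr)\dd\nu(z)$ with $g=\frac{1}{2}(1-\cos(2\pi\,\cdot))$ vanishing only at $0$ and $1$ shows by induction that $\nu$ charges \emph{every} cylinder, in particular $\langle 0^{m+1}\rangle$; since $\nu$ is ergodic (it is strongly mixing, see Corollary~\ref{coro:TM-pressure}), $\nu$-almost every point visits $\langle 0^{m+1}\rangle$ and hence contains arbitrarily long runs, so $\nu(\XX_m)=0$. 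Your proposed inequality therefore degenerates to $\nu(C_n(x))\geqslant 0$, and no choice of $E\subseteq\XX_m$ can repair it.

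The paper circumvents this by never restricting the \emph{tail} variable to $\XX_m$. One restricts the Lebesgue integral of the approximant density $P^{\pa}_N$ to the half-cylinder $C_n^j(x)$ with $j=\widehat{x}_n$; the tail mass is then $\int_{j/2}^{(j+1)/2}P^{\pa}_{N-n}\dd\lambda=\frac{1}{2}$ by the symmetry of $P^{\pa}_{N-n}$, which replaces your $\nu(E)$. The choice $j=\widehat{x}_n$ guarantees (using $x\in\XX_m$) that every shift $2^k y$, $y\in C_n^j(x)$, $k<n$, stays in $D(m)=[2^{-m-1},1-2^{-m-1}]$, so $\inf_{C_n^j(x)}\psi^{\pa}_n$ differs from $\inf_{C_n(x)\cap\XX_m}\psi^{\pa}_n$ by at most the bounded-distortion constant of the truncated potential $\psi^{D(m)}=\psi\,\mathbf{1}^{\pa}_{D(m)}\geqslant\psi$. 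The same truncated potential handles your second worry about the upper half of \eqref{Eq:nu-upper-lower-bound} in one stroke: since $\psi^{\pa}_n\leqslant\psi^{D(m)}_n$ everywhere and equality holds on $\XX_m$, the gap between $\sup_{C_n(x)}\psi^{\pa}_n$ and $\sup_{C_n(x)\cap\XX_m}\psi^{\pa}_n$ is bounded by the distortion of the H\"older function $\psi^{D(m)}_n$ on a single cylinder, with no compounding of errors over several runs.
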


\begin{proof}
  To establish \eqref{Eq:nu-upper-bound}, we note that,
  for $N > n$, one has
\begin{align*}
   \nu^{\pa}_N(C_n(x)) & \, =
   \int_{C_n(x)} P^{}_N(\xi) \dd \xi \, \leqslant 
   \sup_{y \in C_n(x)} P^{}_n(y) \int_{C_n(x)} P^{}_{N-n}(2^n \xi)
   \dd \xi \\[1mm]
    & \, =  \sup_{y \in C_n(x)} \exp(\psi^{\pa}_{n}(y))
    \; {2^{-n}}\! \int_0^1 P^{}_{N-n}(\xi) \dd \xi
    \, = \, {2^{-n}} \! \sup_{y \in C_n(x)} \!
    \exp(\psi^{\pa}_{n}(y)) \ts ,
\end{align*}
where we used the fact that $P^{\pa}_{N-n}$ is a probability
density on $[0,1]$. Taking $N \rightarrow \infty$ in the above
relation yields \eqref{Eq:nu-upper-bound}.

Let $C_n^0(x) = \langle x_1 \! \cdots x_n 0 \rangle$ and
$C_n^1(x) = \langle x_1 \! \cdots x_n 1 \rangle$ denote the left and
right half of this interval, respectively.  For
$j\in\left\{0,1\right\}$, we find
\begin{align*}
  \nu^{\pa}_N(C_n(x)) \, & \geqslant \int_{C_n^j(x)} P^{}_N(\xi) 
  \dd \xi \, \geqslant  \inf_{y \in C_n^j(x)}  \! P^{}_n(y)
  \int_{C_n^j(x)} P^{}_{N-n}(2^n \xi) \dd \xi \\[1mm]
   & = \inf_{y \in C_n^j(x)} \exp(\psi^{\pa}_{n}(y))
   \; {2^{-n}} \! \int_{j/2}^{(j+1)/2} P_{N-n}(\xi) \dd \xi 
     \, = \, {2^{-n-1}} \! \inf_{y \in C_n^j(x)} \!
     \exp(\psi^{\pa}_{n}(y)) \ts ,
\end{align*}
using that $P^{\pa}_{N-n}$ is symmetric under $x \mapsto 1-x$ in
$[0,1]$. Again, performing $N \rightarrow \infty$ gives
\[
  \nu(C_n(x)) \, \geqslant \, {2^{-n-1}}
  \max_{j\in\{0,1\}}\inf_{y \in C_n^j(x)} \!
  \exp(\psi^{\pa}_{n}(y)) \ts .
\]
By assumption, we have $m\geqslant 1$. To continue, for
$B\subset [0,1]$, define a $B$-truncated version of $\psi$ as
$\psi^B := \psi \ts\ts \mathbf{1}^{\pa}_{\nts\nts B}$.  With
$D(m) := [2^{-m-1}, 1-2^{-m-1}]$, consider the function
\begin{equation}\label{eq:def-psi-m}
  \psi^{D(m)} \, := \, \psi \, \mathbf{1}^{\pa}_{\nts D(m)}
   \, \geqslant \, \psi \ts ,
\end{equation}
viewed as a function on $\XX$. Clearly, $\psi^{D(m)}$ is 
H\"older continuous relative to the metric $\varrho^{\pa}_2$, 
and $\psi^{D(m)}(z) = \psi(z)$ as long as 
$z \neq \langle \omega \rangle$ for
$\omega \in \{ 0 \cdots 0 \ts , \nts 1 \cdots 1 \} \subset
\{0,1\}^{m+1}$. In particular, this holds for $z \in \XX_m$.  We
denote the bounded distortion constant of $\psi^{D(m)}$ by $W'$.

Now, choose $j=j(n)= \widehat{x}^{\pa}_n$.  Since $x\in\XX_m$, this
implies
\[
\begin{split}
    \inf_{y \in C_n^j(x)\cap \XX_m} \! \psi^{\pa}_{n}(y)
    \,  & -\inf_{y \in C_n^j(x)} \! \psi^{\pa}_{n}(y)
    \;  \leqslant \, \sum_{k = 0}^{n-1}\,
    \sup_{y \in C_n^j(x)\cap \XX_m} \!\! \psi(2^k y)\; -
    \!\! \inf_{y \in C_n^j(x)} \! \psi(2^k y) \\[1mm]
    & \leqslant \, \sum_{k=0}^{n-1}\,
    \sup_{y \in \langle x^{\pa}_{k+1} \cdots \, x^{\pa}_{n}
          \widehat{x}^{\pa}_{n} \rangle\cap \XX_m}
    \psi^{D(m)}(y) \; - \!\!
    \inf_{y \in \langle x^{\pa}_{k+1} \cdots \, x^{\pa}_{n}
         \widehat{x}^{\pa}_{n} \rangle}
    \! \psi^{D(m)}(y) \ts .
\end{split} 
\]
Assume $n \geqslant m$. Now, using H\"older continuity and setting
$K' = \ee^{W'}$ gives
\[
  \inf_{y \in C_n^j(x)\cap \XX_m} \! \psi^{\pa}_{n}(y)
  \; - \! \inf_{y \in C_n^j(x)} \! \psi^{\pa}_{n}(y)
  \, \leqslant \, W' \, = \,  \log( K') \ts .
\]
Since
$\inf_{y \in C_n^j(x)\cap \XX_m} \psi^{\pa}_{n}(y)\geqslant \inf_{y
  \in C_n(x)\cap \XX_m} \psi^{\pa}_{n}(y)$, we have established the
lower bound.

The upper bound follows by a similar calculation.  Using
$\psi^{\pa}_{n}(y) \leqslant \psi_{n}^{D(m)}(y)$ for $y \in \XX$, we find
\[
  \sup_{y \in C_n(x)} \! \psi^{\pa}_{n}(y) \; - \! \sup_{y \in C_n(x)
    \cap \XX_m}\! \psi^{\pa}_{n}(y) \, \leqslant \sup_{y \in C_n(x)} \!
  \psi^{D(m)}_n(y) \;  - \! \sup_{y \in C_n(x) \cap \XX_m}
  \! \psi^{D(m)}_n(y) < \log (K')
\]
as claimed.
\end{proof}

The following two results should be compared with
Eq.~\eqref{eq:Gibbs-dim-birk-connection}. We first consider only
cylinders as shrinking neighbourhoods of a point $x \in \XX_m$
before we allow for more general balls $B(x,r)$ with $r > 0$, which
may take different forms when built with respect to the Euclidean
metric.

\begin{prop}\label{Prop:measure-birkhoff-asymptotic-equivalence}
   Let\/ $m \in \NN$. For all\/ $x \in \XX_m$, one has
\[
  \lim_{n \rightarrow \infty} \myfrac{1}{n} \log \bigl( \nu(C_n(x))
  \bigr) \, = \ts \lim_{n \rightarrow \infty} \myfrac{1}{n} \log \left(
    {2^{-n}} \exp ( \psi^{\pa}_{n}(x)) \right) \, = \, - \log(2)
   \, + \lim_{n \rightarrow \infty} \frac{\psi^{\pa}_{n}(x)}{n} \ts ,
\]
provided that any of the limits exists.
\end{prop}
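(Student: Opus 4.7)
The plan is to combine Lemmas \ref{Lemma:SFT-sup-inf-bounds} and \ref{Lemma:bounds-nu} into a two-sided sandwich that controls $\nu(C_n(x))$ by $2^{-n}\exp(\psi_n(x))$ up to a multiplicative constant depending only on $m$. Once this is in place, taking logarithms and dividing by $n$ will make the constants disappear, and the asserted equality of limits will be immediate.

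More concretely, I would first invoke the lower and upper bounds from \eqref{Eq:nu-upper-lower-bound}, which give
\[
   \myfrac{1}{2^n K'} \inf_{y \in C_n(x)\cap\XX_m}\! \exp(\psi^{\pa}_n(y))
   \, \leqslant \, \nu(C_n(x)) \, \leqslant \,
   \myfrac{K'}{2^n}\! \sup_{y \in C_n(x)\cap\XX_m}\! \exp(\psi^{\pa}_n(y)) \ts ,
\]
valid for $x \in \XX_m$ and all sufficiently large $n$. Since $x \in \XX_m \cap C_n(x)$, both the infimum and the supremum can be compared directly with $\exp(\psi^{\pa}_n(x))$, and Lemma \ref{Lemma:SFT-sup-inf-bounds} gives the bounded distortion estimate
\[
   \myfrac{1}{K} \exp(\psi^{\pa}_n(x)) \, \leqslant \!
   \inf_{y \in C_n(x)\cap\XX_m}\! \exp(\psi^{\pa}_n(y))
   \, \leqslant \, \exp(\psi^{\pa}_n(x))
   \, \leqslant  \sup_{y \in C_n(x)\cap\XX_m}\!
   \exp(\psi^{\pa}_n(y)) \, \leqslant \, K \exp(\psi^{\pa}_n(x)) \ts .
\]
Combining these two displays yields a single sandwich
\[
  \myfrac{1}{K K'\ts 2^n} \exp(\psi^{\pa}_n(x))
    \, \leqslant \, \nu(C_n(x)) \, \leqslant \,
    \myfrac{K K'}{2^n} \exp(\psi^{\pa}_n(x)) \ts ,
\]
with constants $K = K(m)$ and $K' = K'(m)$ independent of $n$.

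Taking logarithms, dividing by $n$, and sending $n \to \infty$ then shows that
\[
   \myfrac{1}{n}\log \nu(C_n(x)) \, - \, \myfrac{1}{n}\log\bigl(2^{-n}
   \exp(\psi^{\pa}_n(x)) \bigr) \, \xrightarrow{\; n \to \infty \;} \, 0 \ts ,
\]
so the two sequences converge or diverge simultaneously, and agree in the limit whenever one of them exists. The second equality in the proposition is then just the tautological computation $\frac{1}{n}\log(2^{-n}\exp(\psi^{\pa}_n(x))) = -\log(2) + \psi^{\pa}_n(x)/n$, from which the last expression in the claim follows directly.

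The main work has really already been done in the preceding lemmas; no further obstacle remains. The only thing worth being careful about is that the bounded distortion constant $K'$ in Lemma \ref{Lemma:bounds-nu} required $n \geqslant m$, so strictly speaking the sandwich only holds for $n$ large enough, but this is harmless for an asymptotic statement of the form $\frac{1}{n}\log(\cdot)$.
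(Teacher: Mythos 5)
Your proposal is correct and follows essentially the same route as the paper: both combine Lemma~\ref{Lemma:SFT-sup-inf-bounds} with the two-sided bound \eqref{Eq:nu-upper-lower-bound} of Lemma~\ref{Lemma:bounds-nu}, use that $x$ itself lies in $C_n(x)\cap\XX_m$ to sandwich $\nu(C_n(x))$ between $(KK'\ts 2^n)^{-1}\exp(\psi^{\pa}_n(x))$ and $KK'\ts 2^{-n}\exp(\psi^{\pa}_n(x))$, and then take logarithms and divide by $n$. Your remark about the restriction $n\geqslant m$ being harmless for the asymptotics is a correct and welcome point of care.
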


\begin{proof}
  By Lemmas~\ref{Lemma:SFT-sup-inf-bounds} and \ref{Lemma:bounds-nu},
  we have
\begin{equation}\label{Eq:meas-birkhoff-asymptotic-equiv}
\begin{split}
  \myfrac{1}{K K'} &\, \leqslant \, \frac{2^{-n} (K')^{-1} \inf_{y \in
      C_n(x)\cap \XX_m} \exp(\psi^{\pa}_{n}(y))}{2^{-n} \sup_{y \in
      C_n(x)\cap \XX_m} \exp(\psi^{\pa}_{n}(y))} \, \leqslant \,
  \frac{\nu(C_{n}(x))}{2^{-n} \exp(\psi^{\pa}_{n}(x))} \\[2mm]
  & \, \leqslant \, \frac{2^{-n} K' \sup_{y \in C_n(x)\cap \XX_m}
    \exp(\psi^{\pa}_{n}(y))}{2^{-n} \inf_{y \in C_n(x)\cap \XX_m}
    \exp(\psi^{\pa}_{n}(y))} \, \leqslant \, K K'.
\end{split}
\end{equation}
If any of the limits exists, we obtain our assertion by
\eqref{Eq:meas-birkhoff-asymptotic-equiv}.
\end{proof}

\begin{coro}\label{Coro:SFT-local-dim-birkhoff-relation}
  Consider\/ $\XX_m$ as a metric subspace of either\/
  $(\XX,\varrho^{\pa}_2)$ or\/
  $(\TT,\varrho^{\pa}_{\mathrm{E}})$. Then, for any\/
  $\tau \in\{\mathrm{E}, 2\}$ and\/ $x \in \XX_m$, one has
\[
   \locdim_{\nu,\tau}(x) \, = \, \lim_{r \rightarrow 0}
   \frac{\log \bigl(\nu(B_{\tau}(x,r))\bigr)}
   {\log(r)} \, = \, 1 - \myfrac{1}{\log(2)} \lim_{n
    \rightarrow \infty} \frac{\psi^{\pa}_{n}(x)}{n} \ts ,
\]
provided that any of the limits exists.
\end{coro}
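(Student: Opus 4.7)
The plan is to deduce the ball statement from the cylinder result of Proposition~\ref{Prop:measure-birkhoff-asymptotic-equivalence} by sandwiching $B_{\tau}(x,r)$ between two cylinders $C_{n_1}(x)$ and $C_{n_2}(x)$ whose levels both satisfy $n_i = -\log_2(r) + O(1)$. After taking logarithms, dividing by $\log(r)$, and using that $-\log(r)/n \to \log(2)$ in this regime, the two bounds will both converge to the limit $1 - (\log(2))^{-1} \lim_n \psi^{\pa}_{n}(x)/n$ supplied by Proposition~\ref{Prop:measure-birkhoff-asymptotic-equivalence}.

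For the shift metric $\tau = 2$, the sandwich is in fact an equality: by the very definition of $\varrho^{\pa}_2$, one has $B_2(x,r) = C_n(x)$ whenever $r \in [2^{-n}, 2^{-n+1})$, so the assertion follows at once from Proposition~\ref{Prop:measure-birkhoff-asymptotic-equivalence}, up to the harmless factor $(n-1)/n \to 1$ coming from the range of $\log(r)$.

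For the Euclidean case, the key geometric input is that $x \in \XX_m$ keeps $x$ bounded away from all dyadic rationals at a rate controlled by $m$. Since neither the binary expansion of $x$ nor its bitwise complement can contain a run of $m+1$ equal digits, each of the tails $x^{\pa}_{k+1} x^{\pa}_{k+2} \cdots$ and $\widehat{x}^{\pa}_{k+1} \widehat{x}^{\pa}_{k+2} \cdots$ carries a $1$ within its first $m+1$ positions. A direct calculation then shows that the distance from $x$ to either endpoint of $C_k(x)$ is at least $2^{-k-m-1}$. Consequently, for $r \in (2^{-n-1}, 2^{-n}]$, one has the inclusions
\[
  C_{n+1}(x) \, \subseteq \, B_{\mathrm{E}}(x,r) \, \subseteq \,
  C_{n-m-1}(x) \ts ,
\]
the left one because every point of $C_{n+1}(x)$ lies within distance $2^{-n-1} \leqslant r$ of $x$, and the right one because $r \leqslant 2^{-n} = 2^{-(n-m-1)-m-1}$ does not exceed the distance from $x$ to the boundary of $C_{n-m-1}(x)$. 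Taking $\log \nu (\,\cdot\,)$ of this chain, dividing by $\log(r)$, and invoking Proposition~\ref{Prop:measure-birkhoff-asymptotic-equivalence} together with $(n \pm c)/n \to 1$ then yields the claim for $\tau = \mathrm{E}$.

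The biconditional phrasing ``provided any of the limits exists'' is handled by running the same sandwich in the opposite direction, $B_{\mathrm{E}}(x, 2^{-n-m-1}) \subseteq C_n(x) \subseteq B_{\mathrm{E}}(x, 2^{-n})$, which transfers existence of $\lim_r \log \nu(B_{\mathrm{E}}(x,r))/\log(r)$ back to existence of $\lim_n \log \nu(C_n(x))/n$, whence Proposition~\ref{Prop:measure-birkhoff-asymptotic-equivalence} closes the loop. The only delicate step — rather than a genuine obstacle — is the geometric comparison in the Euclidean setting: without the SFT assumption $x \in \XX_m$, the point $x$ could lie arbitrarily close to a dyadic rational, causing small Euclidean balls around $x$ to straddle cylinders of wildly different levels and breaking the sandwich entirely. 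It is precisely the quantitative bound $2^{-k-m-1}$ on the distance to $\partial C_k(x)$ that prevents this and makes the reduction to Proposition~\ref{Prop:measure-birkhoff-asymptotic-equivalence} go through.
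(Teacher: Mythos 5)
Your proposal is correct and follows essentially the same route as the paper: identify shift-metric balls with cylinders, and in the Euclidean case use the SFT condition to bound the distance from $x\in\XX_m$ to the boundary of $C_k(x)$ from below by $2^{-k-m-1}$, giving the two-sided inclusion between cylinders whose levels differ from $-\log_2(r)$ by a bounded amount, after which Proposition~\ref{Prop:measure-birkhoff-asymptotic-equivalence} finishes the argument. The only cosmetic difference is that you work with arbitrary $r$ in dyadic ranges from the start, whereas the paper first treats $r_n=2^{-n}$ and then interpolates.
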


\begin{proof}
  Consider the metric space $(\XX,\varrho^{\pa}_2)$ first.
  For $r < 1$, one has
\[
  B_2(x,r) \, = \, \langle x^{\pa}_1 \! \cdots x^{\pa}_M
  \rangle \, = \, C^{\pa}_{\nts M} (x) \ts ,
\]
where $M = M(r) = \lceil \log_{1/2}(r) \rceil$ and the claim is
immediate from
Proposition~\ref{Prop:measure-birkhoff-asymptotic-equivalence} since,
for any $(r_n)^{\pa}_{n \in \NN}$ with $r_n > 0$ and
$\lim_{n\to\infty} r_n = 0$, we have
\[
  \lim_{n \rightarrow \infty}
  \frac{\log \bigl(\nu(B_2(x,r_n))\bigr)}{\log(r_n)}
  \, =  \lim_{n \rightarrow \infty}
  \frac{\log \bigl(\nu(C_{\nts M(r_n)}(x))\bigr)}
  {-M(r_n) \log(2)} \, = \, 1 - \myfrac{1}{\log(2)}
  \lim_{n \rightarrow \infty} \frac{\psi^{\pa}_{n}(x)}{n} \ts .
\]

Next, we consider $( \TT,\varrho^{\pa}_{\mathrm{E}})$ and regard any
ball $B_2(x,r)$ as a subset of $ \TT$ with obvious meaning. Since, for
any $n \in \NN$, $B_2(x,2^{-n})$ has length $2^{-n}$ as an interval in
Euclidean space, it is
$B_2(x,2^{-n}) \subseteq B_{\mathrm{E}}(x,2^{-n})$. By the definition
of $\XX_m$, the Euclidean distance of $x$ from the boundary points of
the interval
$B_2(x,2^{-n}) = \langle x^{\pa}_1 \! \cdots x^{\pa}_n \rangle$ is at
least $2^{-n-m-1}$ for any $n \in \NN$. Thereby, for
$n \geqslant m+1$, we have
\[
  \frac{\,\log \bigl(\nu(B_2(x,2^{-n}))\bigr)}
      {\log(2^{-n})} \, \leqslant \,
  \frac{\,\log \bigl(\nu(B_{\mathrm{E}}(x,2^{-n}))\bigr)}
      {\log(2^{-n})} \, \leqslant \,
  \frac{\,\log \bigl(\nu(B_2(x,2^{-n+m+1}))\bigr)}
      {\log(2^{-n})},
\]
which gives the desired result as $n \rightarrow \infty$ for the
sequence with $r_n = 2^{-n}$. For general sequences
$(r_n)^{\pa}_{n \in \NN}$ that tend to $0$, the corresponding identity
follows by interpolation.
\end{proof}

\begin{remark}
  An immediate consequence of
  Corollary~\ref{Coro:SFT-local-dim-birkhoff-relation} and
  Proposition~\ref{maxBeta} is that, for $y = \overline{01}$ and
  $\tau \in \{\mathrm{E},2\}$, one has
\[
  \dim_{\nu,\tau}(y) \, = \, 2 - \frac{\log(3)}{\log(2)}
  \, \approx \, 0.415 \ts .
\]
On the other hand, Eq.~\eqref{Eq:nu-upper-bound} together with
Corollary~\ref{maxBeta} gives
$\dim_{\nu,\tau}(x) \geqslant 2 - \frac{\log (3)}{\log(2)}$ 
for all $x \in \XX$ and $\tau \in \{\mathrm{E},2\}$ by direct 
calculation. So, we actually get
$\inf_{x\in\XX} \dim_{\nu,\tau}(x)  = 2 - \frac{\log (3)}{\log(2)}$,
which verifies a conjecture from \cite[Sec.~4.4.2]{Q1}.  \exend
\end{remark}

\section{Restricted pressure function and the exhaustion
     principle}\label{Sec:Restricted-pressure}

Interpreting $\psi$ as a function on the symbolic space $\XX$, the
topological pressure from Eq.~\eqref{eq:def-pressure} for $t\ts \psi$
can be rewritten as
\begin{equation}\label{Eq:pressure-functional-psi}
  p(t) \,=\, \cP (t\ts \psi) \, = \,
  \lim_{n\to\infty}\myfrac{1}{n}\log\sum_{\omega\in\Sigma^n}
  \sup_{x\in \langle\omega\rangle}\exp \bigl(t\ts \psi^{\pa}_{n}(x)\bigr) ,
\end{equation}
where the limit exists by subadditivity. We shall see that the mapping
$p \colon \RR \xrightarrow{\quad} \RR \cup \{+\infty\}$ defines a
proper, convex function. We denote its \emph{Legendre transform} by
\begin{equation}\label{eq:legendre-trafo}
  p^{*} (a) \, := \, \sup_{q\in\RR}
  \bigl( q \ts a - p (q) \bigr).
\end{equation}
Note that, since $p(t)=\infty$ for $t<0$, see
Proposition~\ref{prop:poinwise-conv} below, we are in the particular
situation that
$p^{*} (a) = \sup_{q\geqslant 0} \bigl( q \ts a - p (q) \bigr)$.
We should also mention that $p^{\ast}(\alpha) < \infty $ for
$\alpha < \log\bigl(\frac{3}{2}\bigr)$, again by
Proposition~\ref{prop:poinwise-conv}, such that
$\bigl(-\infty, \log\bigl(\frac{3}{2}\bigr)\bigr)$
is contained in the essential domain of $p^{\ast}$.

For a closed subshift $\XX'\subseteq\XX$ that is invariant under the
left shift, let us define the restricted pressure by
\[
  \cP\left(t\ts \psi \ts | \ts \XX'\right) \, := 
  \lim_{n\to\infty}\myfrac{1}{n}\log\sum_{\omega\in\Sigma^{n}}
  \;\sup_{x\in\langle\omega\rangle\cap\XX'} \! \exp
  \bigl(t\ts \psi^{\pa}_{n}(x)\bigr) .
\]
For any $\omega$ with $\langle\omega\rangle\cap\XX'=\varnothing$, we
set
$\sup_{x\in\langle\omega\rangle\cap\XX'}\exp \left(t\ts
  \psi^{\pa}_{n}(x)\right)=0$.  Clearly,
$\cP\left(t\ts \psi\right)=\cP\left(t\ts \psi \ts | \ts \XX\right)$
gives back the pressure defined above, and
$\cP(t \psi \ts | \ts \XX') \leqslant \cP(t \psi)$ by definition. As
in the previous section, we will be interested in the SFTs $\XX_m$.

Fix a closed invariant subshift $\XX'\subseteq\XX$ and, for every
$n\in\NN$, consider
\[
  a_{n} \, := \, \log\sum_{\omega\in\Sigma^{n}}\,
  \sup_{x\in\langle\omega\rangle\cap\XX'}
  \! \exp \bigl(t\ts \psi^{\pa}_{n}(x) \bigr).
\]
Now, the sequence
$(a_{n})^{\pa}_{n\in\NN}$ is finite and subadditive, which follows by
\begin{align*}
  a^{\pa}_{n+k} \, & = \, \log\sum_{\omega\in\Sigma^{n+k}}\,
            \sup_{x\in\langle\omega\rangle\cap\XX'} \! \exp
            \bigl(t\ts \psi^{\pa}_{n+k}(x)\bigr)\\[1mm]
    &  \leqslant \, \log \biggl(\, \sum_{\omega\in\Sigma^{n}} \,
      \sup_{x\in\langle\omega\rangle\cap\XX'} \!
      \exp \bigl(t\ts \psi^{\pa}_{n}(x)\bigr)
      \sum_{\omega\in\Sigma^{k}} \,
      \sup_{x\in\langle\omega\rangle\cap\XX'} \!
      \exp \bigl( t\ts \psi^{\pa}_{k}(x)\bigr)\biggr)
        \, = \, a^{\pa}_{n} \nts + a^{\pa}_{k} \ts .
\end{align*}
This guarantees that the limit in the definition of
$\cP\left(t\ts \psi \ts | \ts \XX'\right)$ always exists and is given
by the infimum, so we can use
\begin{equation}\label{eq:PressureByInf-1}
  \cP \bigl( t\ts \psi \ts | \ts \XX' \bigr)
  \, := \, \inf_{n\in\NN}\myfrac{1}{n}\log
  \sum_{\omega\in\Sigma^{n}} \,
  \sup_{x\in\langle\omega\rangle\cap\XX'}
  \! \exp \bigl(t\ts \psi^{\pa}_{n}(x)\bigr).
\end{equation}

\begin{prop}\label{prop:Convergence-of-Pressure-1} 
  For\/ $m\in\NN$, consider the function\/
  $p^{\pa}_m \colon \RR \xrightarrow{\quad} \RR$ defined by\/
  $t \mapsto\cP (t\ts \psi \ts | \ts \XX_{m} )$.  Then, for each
  $t \in \RR$, one has
\[
  \cP\left(t\ts \psi\right) \, =
  \lim_{m\to\infty} p^{\pa}_m (t) \ts .
\]
\end{prop}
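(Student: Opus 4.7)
My plan is to prove both inequalities $\lim_{m\to\infty} p_m(t) \leq p(t)$ and $\lim_{m\to\infty} p_m(t) \geq p(t)$ for every $t\in\RR$. The inclusions $\XX_m \subseteq \XX_{m+1} \subseteq \XX$ immediately yield the monotonicity $p_m(t) \leq p_{m+1}(t) \leq p(t)$, since each summand in the defining expression~\eqref{eq:PressureByInf-1} for $p_m(t)$ is a supremum over a subset of the corresponding one for $p(t)$. The monotone limit $L(t):=\lim_{m\to\infty} p_m(t)$ thus exists and satisfies $L(t) \leq p(t)$, so the task reduces to establishing $L(t)\geq p(t)$.

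I will focus first on $t \geq 0$, where $p(t)$ is finite. Setting $S_n^m(t)$ and $S_n(t)$ for the sums in~\eqref{eq:PressureByInf-1} on $\XX_m$ and $\XX$ respectively, I would show the pointwise convergence $\lim_{m\to\infty} S_n^m(t) = S_n(t)$ for every fixed $n\in\NN$. For $\omega\in\Sigma^n$ and $m\geq n$, the intersection $\langle\omega\rangle\cap\XX_m$ is non-empty, since one can extend $\omega$ by an alternating tail $\overline{10}$ or $\overline{01}$ chosen so as to break any trailing run in $\omega$. The singularities of $\psi_n$ inside $\langle\omega\rangle$ are confined to its two dyadic boundary points, and by the concavity property established in Section~\ref{Sec:Preliminaries} the supremum of $\exp(t\psi_n)$ is attained at an interior non-singular point $x^{*}$, where $\psi_n$ is continuous. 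The density of $\XX_m \subset [0,1]$ in the Euclidean metric (any real number can be approximated by binary expansions with bounded run-lengths) then provides approximants in $\langle\omega\rangle\cap\XX_m$, and summing over the finite set $\Sigma^n$ yields the claimed convergence.

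The main technical obstacle is to upgrade this fixed-$n$ convergence to convergence of the infima, since $p_m(t)=\inf_n\frac{1}{n}\log S_n^m(t)$ and the infimum in $n$ does not automatically commute with $\lim_m$. My plan is to establish a uniform-in-$n$ comparison between $S_n^m(t)$ and $S_n(t)$ for $m$ large, based on the decomposition $\Sigma^n = \Sigma_m^n \cup (\Sigma^n \setminus \Sigma_m^n)$. For admissible $\omega\in\Sigma_m^n$, the H\"older regularity of $\psi$ on $\XX_m$ from Lemma~\ref{lem:deriv} together with the truncated-potential bounded-distortion argument underlying the proof of Lemma~\ref{Lemma:bounds-nu} gives a uniform multiplicative bound on the ratio of the suprema on $\langle\omega\rangle$ versus $\langle\omega\rangle\cap\XX_m$. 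For inadmissible $\omega\notin\Sigma_m^n$, the cylinder contains a run of length at least $m+1$, which forces some term of the Birkhoff sum to be at most $-2(m+1)\log(2) + O(1)$ pointwise on $\langle\omega\rangle$, giving a strong pointwise suppression of $\exp(t\psi_n)$ there. Combining this suppression with the count of inadmissible words, the missing contribution can be controlled against $S_n(t)\geq\ee^{n p(t)}$. Making this balance uniform in $n$ is the delicate core of the exhaustion principle, where the quantitative singularity estimates from the earlier sections enter in an essential way.

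For $t < 0$, the argument is different but simpler: $p(t) = +\infty$ as noted after~\eqref{eq:legendre-trafo}, because $\exp(t\psi_n)$ is unbounded on any cylinder containing a singularity. On the other hand, $-\psi$ attains values of order $2(m+1)\log(2)$ at points of $\XX_m$ close to its boundary, and inserting such a point into the sum defining $S_n^m(t)$ forces $p_m(t) \geq -2t(m+1)\log(2) \to +\infty$, matching $p(t) = +\infty$ in the limit.
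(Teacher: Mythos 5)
Your reduction to the inequality $\lim_m p_m(t)\geqslant p(t)$ via monotonicity is correct and matches the paper, and you rightly observe that fixed-$n$ convergence of the partition sums is useless here because the infimum over $n$ does not commute with the limit in $m$ in the needed direction. But the step you then defer --- ``making this balance uniform in $n$ is the delicate core'' --- is precisely the content of the proposition, and the route you sketch for it does not obviously close. Your plan is to bound the contribution of the inadmissible words $\Sigma^n\setminus\Sigma^n_m$ \emph{absolutely}, using the suppression coming from a run of length $m+1$, and to compare it against $S_n(t)\geqslant \ee^{np(t)}$. The difficulty is that a word with a single run of length $m+1$ is suppressed only by a factor $\exp\bigl(-c\ts t\ts m^2\bigr)$ that is constant in $n$, while such words are exponentially numerous in $n$ (indeed, for fixed $m$ almost all words of length $n$ are inadmissible). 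An absolute bound of the form (count)$\times$(suppression) therefore cannot beat $\ee^{np(t)}$ uniformly in $n$; what is needed is a \emph{word-by-word} comparison of each inadmissible $\omega$ with an admissible partner whose supremum of $\psi^{\pa}_n$ is almost as large. This is exactly what the paper constructs: a map $h\colon\Sigma^n\setminus\Sigma^n_m\to\Sigma^n_m$ (flip the tail at each first occurrence of a run of length $m+1$), with multiplicity at most $2^{\lfloor n/m\rfloor}$ (Lemma~\ref{lem:number}) and with $\sup_{\langle\omega\rangle}\psi^{\pa}_n\leqslant\sup_{\langle h(\omega)\rangle\cap\XX_m}\psi^{\pa}_n+4\lfloor n/m\rfloor+2^{m+2}$ (Lemma~\ref{lem:comp-after-algorithm}). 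Both error terms are of the form $o(n)+O_m(1)$ with the $n$-linear part carrying a $1/m$, which is what makes the bound $p(t)\leqslant\frac{\log(2)+4t}{m}+p_m(t)$ possible. Without this (or an equivalent device), your argument has a genuine gap at its central point.

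For $t<0$ your conclusion is right but the mechanism as stated is not: a single visit of the orbit to a point where $-\psi$ is of order $2(m+1)\log(2)$ contributes only $O_m(1)$ to $\psi^{\pa}_n$ and hence nothing to $\psi^{\pa}_n/n$. You need a point of $\XX_m$ whose \emph{Birkhoff average} is of order $-m\log(2)$, i.e.\ whose orbit returns to a $2^{-m}$-neighbourhood of the singularity a positive proportion of the time; the paper uses $\overline{0^m1^m}$, for which $\frac{1}{n}\psi^{\pa}_n\to\frac{1}{m}\sum_{k=1}^m\psi(2^{-k})\sim -(m+1)\log(2)\to-\infty$. This repairs your last paragraph, but as written the justification conflates the value of $\psi$ at a point with its Birkhoff average.
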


Since $p^{\pa}_m$ is continuous for all $m \in \mathbb{N}$ by 
\cite{DK}, an immediate consequence of this result is that $p$ is 
a lower semi-continuous function. Thus, the same holds for its 
Legendre transform $p^{\ast}$.

\begin{proof}[Proof of
    Proposition~\textnormal{\ref{prop:Convergence-of-Pressure-1}}]
  Since $\cP\left(t\ts \psi|\XX_{m}\right)$ is monotonically
  increasing in $m$, the limiting expression in
  Proposition~\ref{prop:Convergence-of-Pressure-1} exists.  From the
  fact that
  $ \cP (t\ts \psi \ts | \ts \XX_{m}) \, \leqslant \, \cP (t \ts \psi
  )$ for every $m\in\NN$, we obtain
\[
   \cP (t\ts \psi )
   \, \geqslant \, \limsup_{m\to\infty}
   \cP (t\ts \psi \ts | \ts \XX_{m} ) \ts . 
\]
Next, we prove
$ \cP (t\ts \psi ) \leqslant \liminf_{m\to\infty}\cP (t\ts \psi\ts |
\ts \XX_m )$ in several steps.  Our approach is to find, for each word
$\omega\in\Sigma^n\setminus \Sigma_m^n$, a corresponding word
$\omega'\in\Sigma_{m}^n$, and compare
$\sup_{x\in\langle\omega\rangle}t\ts \psi^{\pa}_{n}(x)$ with
$\sup_{x\in\langle\omega'\rangle\cap\XX_m}t\ts \psi^{\pa}_{n}(x)$.
This will be done in Lemma~\ref{lem:comp-after-algorithm}.
Furthermore, for a given $\omega'\in\Sigma_{m}^n$, we will estimate
the number of words $\omega\in\Sigma^n\setminus \Sigma_m^n$ which will
be compared with $\omega'$.  This will be done in Lemma
\ref{lem:number}.
 
To construct such an $\omega'$ for
$\omega=\omega^{\pa}_1 \! \cdots \omega^{\pa}_n$, we use the following
algorithm.  Start at $\omega^{\pa}_1$.  Look at the first letter where
$0$ or $1$ has appeared $(m\! +\! 1)$ times in a row.  Say this
happens at $\omega^{\pa}_\gamma$. Then, build the new word
$\widetilde{\omega}:=\omega^{\pa}_1 \! \cdots\omega^{\pa}_{\gamma-1}
\widehat{\omega}^{\pa}_\gamma\!\cdots\widehat{\omega}^{\pa}_n$.
Repeat the algorithm with $\widetilde{\omega}$ and keep repeating
until the final word $\omega'$ lies in $\Sigma_{m}^n$.  We denote the
map given by this algorithm by $h$, so $h\left(\omega\right)=\omega'$.

\begin{lemma}\label{lem:number}
  For\/ $\omega'\in\Sigma_{m}^n$, one has\/
  $\,\card\left\{\omega\in\Sigma^n\setminus \Sigma_m^n :
    h\left(\omega\right) = \omega'\right\} < 2^{\left\lfloor
      n/m\right\rfloor}$.
\end{lemma}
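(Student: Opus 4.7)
The plan is to set up a bijection between the preimages $h^{-1}(\omega')$ and subsets of a small set $F(\omega') \subseteq \{m+1, \ldots, n\}$ of \emph{potential flip positions}, and then count.

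First, I analyze the algorithm's trajectory. Let $\omega \in h^{-1}(\omega')$ have flip sequence $\gamma_1 < \cdots < \gamma_k$, and write $\omega = \omega^{(0)}, \omega^{(1)}, \ldots, \omega^{(k)} = \omega'$ for the intermediate states. Since the $i$-th step only flips positions $\geqslant \gamma_i$, positions $1, \ldots, \gamma_i - 1$ of $\omega^{(i-1)}, \omega^{(i)}, \ldots, \omega^{(k)}$ all coincide. At the $i$-th step, the $(m+1)$-run of some value $a$ ending at $\gamma_i$ in $\omega^{(i-1)}$ is broken by the flip: positions $\gamma_i - m, \ldots, \gamma_i - 1$ still hold the value $a$, but position $\gamma_i$ becomes $\widehat{a}$. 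Consequently, in $\omega'$ one has $\omega'_{\gamma_i - m} = \cdots = \omega'_{\gamma_i - 1} = a$ and $\omega'_{\gamma_i} = \widehat{a}$. I therefore define
\[
  F(\omega') \, := \, \bigl\{\gamma \in \{m+1, \ldots, n\} :
    \omega'_{\gamma - m} = \cdots = \omega'_{\gamma - 1}
    \neq \omega'_{\gamma}\bigr\} ,
\]
and the observation above shows $\{\gamma_1, \ldots, \gamma_k\} \subseteq F(\omega')$.

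Second, I show that the map $\omega \mapsto \{\gamma_1, \ldots, \gamma_k\}$ is a bijection between $h^{-1}(\omega')$ and $2^{F(\omega')}$. Injectivity is clear, since $\omega$ is recovered from $\omega'$ by re-applying the corresponding suffix flips. For surjectivity, given any $S = \{\delta_1 < \cdots < \delta_\ell\} \subseteq F(\omega')$, I construct $\omega$ by applying these suffix flips to $\omega'$; an induction on $i$ then shows that the algorithm on $\omega$ selects exactly the positions in $S$ in order. Indeed, in the $i$-th intermediate state, the prefix up to $\delta_i - 1$ equals $\omega'$ (and hence contains no $(m+1)$-run), whereas position $\delta_i$ has been flipped precisely once to the value $a$, creating a fresh $(m+1)$-run ending at $\delta_i$. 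After the $\ell$-th step, the state becomes $\omega' \in \Sigma_m^n$ and the algorithm halts.

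Finally, I exclude the empty subset, which corresponds to $\omega = \omega' \in \Sigma_m^n$, and bound $|F(\omega')|$. Any two elements $\gamma < \gamma'$ of $F(\omega')$ differ by at least $m$: otherwise both $\gamma - 1$ (value $a$) and $\gamma$ (value $\widehat{a}$) would lie in the constant block $[\gamma' - m, \gamma' - 1]$, a contradiction. Combined with $\min F(\omega') \geqslant m + 1$ and $\max F(\omega') \leqslant n$, this yields $|F(\omega')| \leqslant \lfloor (n-1)/m \rfloor$. Therefore
\[
  \card\{\omega \in \Sigma^n \setminus \Sigma_m^n : h(\omega) = \omega'\}
  \, = \, 2^{|F(\omega')|} - 1 \, \leqslant \,
  2^{\lfloor (n-1)/m \rfloor} - 1 \, < \, 2^{\lfloor n/m \rfloor} .
\]
The main technical obstacle is the inductive verification in the surjectivity step: one must check that the algorithm produces no spurious $(m+1)$-run at any intermediate stage, so that the flip positions it selects are precisely those prescribed by $S$ and it does not terminate prematurely.
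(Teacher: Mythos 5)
Your proof is correct and follows essentially the same route as the paper: identify the admissible suffix-flip positions determined by the $m$-runs of $\omega'$, observe that preimages correspond to subsets of these positions, and bound their number by $\lfloor n/m\rfloor$. You additionally verify in detail the surjectivity/consistency of this correspondence (that the algorithm really selects exactly the prescribed positions), which the paper only asserts, and you obtain the marginally sharper bound $2^{\lfloor (n-1)/m\rfloor}-1$.
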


\begin{proof}
  Each $\omega'\in \Sigma_m^n$ contains at most
  $\left\lfloor n/m\right\rfloor$ single{\ts}-letter subwords of
  length $m$.  The following algorithm gives a possibility to find
  pre-images of $\omega'$.  Let $\gamma(1),\ldots, \gamma(i)$ be the
  integers such that
  $\omega'_{\gamma(j)-m}=\ldots=\omega'_{\gamma(j)-1}$ for some
  $1\leqslant j \leqslant i$, with $i$ the total number of such
  sequences.  We denote by $v^{\pa}_{\nts j,1}$ the identity and set
  $v^{\pa}_{\nts j,2}\left(\omega'\right):= \omega_1'\cdots
  \omega_{\gamma(j)-1}'\widehat{\omega}_{\gamma(j)}'
  \cdots\widehat{\omega}_{n}'$.  Then, we have that
  $v^{\pa}_{1,k_1}\circ\cdots\circ v^{\pa}_{i,k_i}\left(\omega'\right)$ with
  $k_{\ell}\in\left\{1,2\right\}$ are all pre-images of $h$. Since
  there are $2^i$ possibilities to choose
  $\left\{k_1,\ldots, k_i\right\}$, there are 
  $2^i$ pre-images (with
  all but one in $\Sigma^n\setminus\Sigma_m^n$).
\end{proof}

\begin{lemma}\label{lem:comp-after-algorithm}
  For any pair\/ $\omega\in\Sigma^n \setminus \Sigma_m^n$ and\/
  $\omega'=h(\omega)\in\Sigma_{m}^n$, we have
\[
  \sup_{x\in\langle \omega'\rangle\cap\XX_m} \! \psi^{\pa}_{n}(x)+ 4
  \left\lfloor n/m\right\rfloor + 2^{m+2} \: \geqslant \,
  \sup_{y\in\langle\omega\rangle} \psi^{\pa}_{n} (y) \ts .
\]
\end{lemma}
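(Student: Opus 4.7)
The plan is to split the estimate into two contributions, matching the two error terms. First, I view the map $h$ as a composition of individual tail-flips and show that each flip decreases the pointwise value of $\psi_n$ by at most $4$ when a point is replaced by its image under the flip. Second, I separately account for the effect of restricting the supremum from $\langle\omega'\rangle$ to $\langle\omega'\rangle\cap\XX_m$, which I will show costs at most $2^{m+2}$. Combining the two steps amounts to choosing a point $x^{\star}\in\langle\omega\rangle$ close to the supremum, tracking it through the sequence of flips, and finally modifying its tail so it lands in $\XX_m$.

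For a single flip at position $\gamma$, let $\phi\colon\langle\omega\rangle\to\langle\widetilde{\omega}\rangle$ denote the map that sends $x$ to its mirror image about the midpoint of the enclosing cylinder $\langle x_1\cdots x_{\gamma-1}\rangle$. An inspection of binary expansions shows that $2^\ell\phi(x)\equiv 1-2^\ell x\pmod{1}$ for all $\ell\geqslant\gamma-1$, so the symmetry $\psi(1-u)=\psi(u)$ from Fact~\ref{fact:psi-props} forces the corresponding terms of $\psi_n(x)-\psi_n(\phi(x))$ to vanish. For $\ell<\gamma-1$ I split into two subcases. In the \emph{boundary} case, the digits $x_{\ell+1}\cdots x_{\gamma-1}$ all coincide with the run letter $c$; this happens precisely for $\ell\in\{\gamma-m-1,\ldots,\gamma-2\}$, using that $\omega_{\gamma-m-1}=\widehat{c}$ by minimality of $\gamma$. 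Here $2^\ell x$ lies in the cylinder of $c$-digits touching a singularity while $2^\ell\phi(x)$ lies in the adjacent cylinder, so strict monotonicity of $\psi$ on $[0,1/2]$ (and its mirror image on $[1/2,1]$) yields a non-positive contribution. In the \emph{interior} case $\ell<\gamma-m-1$, the enclosing cylinder is bounded away from $\{0,1\}$ by $\geqslant 2^{-(j_\ell-\ell)}$, where $j_\ell$ is the first index after $\ell$ with $x_{j_\ell}\neq x_{\ell+1}$; Lemma~\ref{lem:deriv} then gives $|\psi(2^\ell x)-\psi(2^\ell\phi(x))|\leqslant 2^{j_\ell-\gamma+2}$. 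Grouping these interior contributions according to the maximal runs of $x_1\cdots x_{\gamma-1}$, each of length $\leqslant m$ and ending at position $\leqslant\gamma-m-1$, a geometric sum bounds their total by $m\cdot 2^{3-m}\leqslant 4$ per flip. Finally, since consecutive flips in $h$ must be separated by at least $m$ positions, there are at most $\lfloor n/m\rfloor$ of them, producing the contribution $4\lfloor n/m\rfloor$.

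For the boundary term $2^{m+2}$, I take $y^{\star}\in\langle\omega'\rangle$ approaching the supremum and define $\widetilde{y}^{\star}\in\langle\omega'\rangle\cap\XX_m$ by replacing its tail past position $n$ by the alternating sequence $\widehat{\omega'_n}\omega'_n\widehat{\omega'_n}\cdots$, which guarantees $\widetilde{y}^{\star}\in\XX_m$. For $\ell\leqslant n-m-1$, the condition $\omega'\in\Sigma_m^n$ ensures that $\langle\omega'_{\ell+1}\cdots\omega'_{\ell+m+1}\rangle$ is separated from $\{0,1\}$ by $\geqslant 2^{-(m+1)}$, whence Lemma~\ref{lem:deriv} gives $|\psi(2^\ell y^{\star})-\psi(2^\ell\widetilde{y}^{\star})|\leqslant 2^{m+2}\cdot 2^{-(n-\ell)}$; the geometric series in $\ell$ sums to at most $4$. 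For the last $m+1$ indices I apply the crude bound $\psi(2^\ell y^{\star})-\psi(2^\ell\widetilde{y}^{\star})\leqslant\log(2)-\psi(2^{-(m+1)})$, which is of order $m$; this aggregates to $O(m^2)$, and an elementary check then confirms $4+2m^2\leqslant 2^{m+2}$ for all $m\geqslant 1$.

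The hard part will be the geometric summation in the interior case of the per-flip bound. It depends crucially on the combinatorial structure introduced by $h$: since $\gamma$ is the \emph{first} position hosting an $(m+1)$-run in $\omega$, the digit $\omega_{\gamma-m-1}$ must equal $\widehat{c}$, which confines the relevant cylinders' distances from $\{0,1\}$ to a geometrically decaying sequence. Without this structural input the per-flip bound would degrade from $O(1)$ to something linear in $\gamma$, and the $4\lfloor n/m\rfloor$ scaling of the final estimate would break down.
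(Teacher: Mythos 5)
Your argument is correct, and it reaches the stated constants by a genuinely different bookkeeping than the paper's. The paper compares $\sup_{\langle\omega\rangle}\psi^{\pa}_n$ with $\sup_{\langle\omega'\rangle\cap\XX_m}\psi^{\pa}_n$ in a single pass, bounding $\sum_{\ell}\sup\bigl(\psi(2^{\ell}y)-\psi(2^{\ell}z)\bigr)$ with the indices $\ell$ partitioned into blocks between consecutive flip positions $\gamma(j)$; inside each block the geometric decay comes from the common prefix of $2^{\ell}\omega$ and $2^{\ell}\omega'$ up to the next flip together with the \emph{uniform} derivative bound $2^{m+2}$ on $J_m$, and the $\XX_m$-restriction is absorbed into the final block. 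You instead factor $h$ into individual reflections $\phi$, prove the pointwise estimate $\psi^{\pa}_n(x)\leqslant\psi^{\pa}_n(\phi(x))+4$ per flip using the \emph{run-dependent} bounds $|\psi^{\ts\prime}|\leqslant 2^{\ts j_\ell-\ell+1}$ from Lemma~\ref{lem:deriv}, and treat the passage from $\langle\omega'\rangle$ to $\langle\omega'\rangle\cap\XX_m$ as a separate tail-modification costing $2^{m+2}$. The analytic ingredients are the same in both routes --- the symmetry $\psi(\widehat{x}\ts)=\psi(x)$ kills the terms with $\ell\geqslant\gamma-1$, monotonicity towards the singularities makes the terms with $\ell\in\{\gamma-m-1,\ldots,\gamma-2\}$ non-positive, and a derivative bound is summed geometrically --- but your modular split makes the provenance of the two error terms transparent, at the price of the run-by-run accounting that the paper avoids via the uniform bound on $J_m$. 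Your key structural inputs check out: minimality of $\gamma$ forces $\omega^{\pa}_{\gamma-m-1}=\widehat{c}$, every run before $\gamma$ has length at most $m$, so the grouped geometric sum gives $m\ts 2^{\ts 3-m}\leqslant 4$ per flip, and consecutive flip positions indeed differ by at least $m$, giving $i\leqslant\lfloor n/m\rfloor$. Two harmless slips at the boundary step: the indices not covered by your geometric estimate are $\ell\in\{n-m,\ldots,n-1\}$, i.e.\ $m$ of them rather than $m+1$, and the crude bound there is $\log(2)-\psi(2^{-m-1})\leqslant 2m\log(2)$ (use $1-\cos(2\pi t)\geqslant 8t^2$), so the total is at most $4+2m^2\log(2)\leqslant 2^{m+2}$ for all $m\geqslant 1$, with room to spare.
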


\begin{proof}
Clearly, we have  
\[
  \sup_{y\in\langle\omega\rangle}\psi^{\pa}_{n}(y)\; - \!
  \sup_{x\in\langle\omega'\rangle\cap\XX_m}\! \psi^{\pa}_{n}(x)
  \: \leqslant \,  \sum_{\ell=0}^{n-1}
  \sup_{\substack{y\in\langle2^{\ell}\omega\rangle \\
    z\in\langle2^{\ell}\omega'\rangle\cap\XX_m}} \!\!
  \! \bigl(\psi(y)-\psi (z) \bigr),
\]
where for $\omega = \omega^{\pa}_1 \cdots \omega^{\pa}_n$ we have
 used the notation $2^{\ell}\omega = \omega^{\pa}_{1+\ell} \cdots
 \omega^{\pa}_n$, for $0\leqslant \ell \leqslant n-1$.

In the next steps, we aim to estimate
\[
  \sup_{\substack{y\in\langle2^{\ell}\omega\rangle\\
      z\in\langle2^{\ell}\omega'\rangle\cap\XX_m}}\!\!
   \bigl(\psi(y)-\psi (z) \bigr),
\]
separately for each $0\leqslant \ell \leqslant n-1$.  Let $ \gamma(j)$
be the position of the first digit which gets inverted $j$ times by
the above algorithm.

With $i$ defined as in the proof of Lemma~\ref{lem:number}, 
it is $i\leqslant \left\lfloor n/m\right\rfloor$. For
$\ell\in\left\{0,\ldots,\gamma(1)-2-m\right\}$, we have 
\begin{align*}
  \sup_{\substack{y\in\langle2^{\ell}\omega\rangle\\
  z\in\langle2^{\ell}\omega'\rangle\cap\XX_m}}
  \! \bigl(\psi(y)-\psi (z)\bigr) \:
  & \leqslant \sup_{y,z\in\langle\omega^{\pa}_{\ell+1}\cdots\,
    \omega^{\pa}_{\gamma(1)-1}\rangle}\bigl(\psi(y)-\psi(z)\bigr)\\[1mm]
  & \, =  \sup_{y\in\langle\omega^{\pa}_{\ell+1}\cdots\,
          \omega^{\pa}_{\gamma(1)-1}\rangle} \! \psi(y) \; -
    \inf_{z\in\langle\omega^{\pa}_{\ell+1}\cdots\,\omega^{\pa}_{\gamma(1)-1}\rangle}
    \! \psi(z) \ts .
\end{align*}
Here, we have
$\langle\omega^{\pa}_{\ell+1} \! \cdots \omega^{\pa}_{\gamma(1)-1}
\rangle\subset\Sigma_m^{\gamma(1)-\ell-1}$ and, by the choice of
$\ell$, we also have the inequality $\gamma(1)-\ell-1> m$.  This
implies for all
$x\in \langle\omega^{\pa}_{\ell+1} \! \cdots
\omega^{\pa}_{\gamma(1)-1}\rangle$ with
$\ell\in\left\{0,\ldots,\gamma(1)-2-m\right\}$ that
$x\in\left[2^{-m-1},1-2^{-m-1}\right]=: J_m$, which follows directly
by considering the dual representation of the unit interval.

For the next estimate, we use the elementary formula
\[
  \left|\psi\left(x+h\right)-\psi(x)\right|
  \, \leqslant \, h \max_{u\in\left[x,x+h\right]}
  \left|\psi^{\ts\prime}\left(u\right)\right|  .
\] 
For $y,z$ as above (i.e., the points where the supremum and the
infimum are attained), we have
$\left|y-z\right|\leqslant 2^{-\gamma(1)+\ell+1}$.  Since
$y,z\in J_m$, we see via Lemma~\ref{lem:deriv} that
$\left|\psi^{\ts\prime}(y)\right|\leqslant 2^{m+2}$.  Combining these
observations gives
\[
  \sup_{\substack{y\in\langle2^{\ell}\omega\rangle\\
      z\in\langle2^{\ell}\omega'\rangle\cap\XX_m}}
  \! \bigl(\psi(y)-\psi (z) \bigr)
  \, \leqslant \, 2^{m+\ell+3-\gamma(1)}.
\]

Recall that $\psi(x)=\psi (\widehat{x}\ts )$ holds for all $x\in\XX$.
In analogy to above, for any choice of
$\ell\in\left\{\gamma(j)-1,\ldots,\gamma(j+1)-2-m\right\}$ with
$j\in\left\{1,\ldots, i-1\right\}$, we then have
\[
    \sup_{\substack{y\in\langle2^{\ell}\omega\rangle\\
           z\in\langle2^{\ell}\omega'\rangle\cap\XX_m}}
  \bigl( \psi(y)-\psi (z) \bigr)
   \, \leqslant 
   \sup_{y,z\in\langle\omega^{\pa}_{\ell+1}\cdots
         \,\omega^{\pa}_{\gamma(j+1)-1}\rangle}
    \! \bigl(\psi(y)-\psi (z)\bigr)
 \, \leqslant \, 2^{m+\ell+3-\gamma(j+1)}.
\]
When $\ell\in\left\{\gamma(j)-1-m,\ldots, \gamma(j)-2\right\}$ with
$j\in\left\{1,\ldots,i\right\}$, the word $2^{\ell}\omega$ starts with
a consecutive sequence of a single letter that is one digit longer
than the corresponding sequence in $2^{\ell}\omega'$.  Note that the
consecutive letter in $2^{\ell}\omega$ and in $2^{\ell}\omega'$ does
not have to be the same but this issue can be handled by the fact that
$\psi(x)=\psi (\widehat{x}\ts )$.  By the monotonicity on
$\bigl[ 0, \frac{1}{2}\bigr]$ and on $\bigl[ \frac{1}{2} , 1 \bigr]$,
we obtain
\[
  \sup_{\substack{y\in\langle2^{\ell}\omega\rangle\\
      z\in\langle2^{\ell}\omega'\rangle\cap\XX_m}}\!\!
  \bigl(\psi(y)-\psi (z)\bigr)  \, \leqslant \, 0 \ts .
\]

Finally, we consider $\ell\in\left\{\gamma(i)-1,\ldots,n-1\right\}$
with $\gamma (i)$ as above. This gives
\[
  \sup_{\substack{y\in\langle2^{\ell}\omega\rangle\\
      z\in\langle2^{\ell}\omega'\rangle\cap\XX_m}}
    \!\! \bigl(\psi(y)-\psi(z)\bigr)
    \, =  \sup_{y\in\langle\omega^{\pa}_{\ell+1}
      \cdots\,\omega^{\pa}_n\rangle} \! \psi(y) \; - \!
    \inf_{z\in\langle\omega^{\pa}_{\ell+1}
       \cdots\,\omega^{\pa}_n\rangle\cap\XX_m}
    \! \psi(z) \ts .
\]
We note that each of the words
$\omega^{\pa}_{\ell+1} \! \cdots\omega^{\pa}_n$ with
$\ell\in\left\{\gamma(i)-1,\ldots,n-1\right\}$ starts with at most
$m$ times the same digit.  If $\ell<n-m$, the word
$\omega^{\pa}_{\ell+1}\! \cdots \omega^{\pa}_n$ is longer than $m$
digits, which implies that the supremum of $\psi$ on the cylinder
$\langle\omega^{\pa}_{\ell+1}\!\cdots\omega^{\pa}_n\rangle$ is
attained in $J_m$.

If $\ell\geqslant n-m$, the supremum of $\psi$ on the cylinder
$\langle\omega^{\pa}_{\ell+1}\!\cdots\omega^{\pa}_n\rangle$ is
attained at the word starting with
$\omega^{\pa}_{\ell+1}\!\cdots\omega^{\pa}_n$ followed by an infinite
sequence of the letter $\widehat{\omega}^{\pa}_{\ell+1}$.  This
follows from the fact that $\psi$ is monotonically increasing on the
interval $\bigl[ 0, \frac{1}{2} \bigr]$ and monotonically decreasing
on $\bigl[ \frac{1}{2} ,1 \bigr]$.  This implies in particular that
the supremum is also attained on $J_m$ in this case.  By the
restriction $z\in\XX_m$ for the infimum, it follows directly that
$z\in J_m$.  Thus, for $\ell\in \{\gamma(i)-1,\ldots,n-1 \}$, an
analogous argument as above gives
\[
  \sup_{\substack{y\in\langle2^{\ell}\omega\rangle\\
      z\in\langle2^{\ell}\omega'\rangle\cap\XX_m}}
  \!\! \bigl( \psi (y)-\psi (z) \bigr) 
   \, \leqslant \, 2^{m+\ell +2 -n}.
\]

As usual, for $\gamma'<\gamma$, we define the sum
$\sum_{\ell=\gamma}^{\gamma'}$ over any quantity to be zero.  Putting
things together, with the integer $i$ from above, we get
\begin{align*}
  \sum_{\ell=0}^{n-1} &
  \sup_{\substack{y\in\langle2^{\ell}\omega\rangle\\
  z\in\langle2^{\ell}\omega'\rangle\cap\XX_m}}
  \!\! \bigl( \psi(y)-\psi(z)\bigr) \\[2mm]
  \leqslant \;\,& \! \sum_{\ell=0}^{\gamma(1)-m-2} 2^{m+\ell+3-\gamma(1)}
      \, + \, \sum_{j=1}^{i-1} \sum_{\ell=\gamma(j)-1}^{\gamma(j+1)-2-m}
       2^{m+\ell+1-\gamma(j+1)}
  \;\,  +\sum_{\ell=\gamma(i)-1}^{n-1} 2^{m+\ell+2-n} \\[2mm]
  \leqslant \;\, & \sum_{\ell'=0}^{\infty}2^{-\ell'+1}+
      \biggl(\sum_{j=1}^{i-1} \sum_{\ell'=0}^{\infty}2^{-\ell'+1}\biggr)
      +\sum_{\ell'=0}^{\infty} 2^{-\ell'+m+1} 
      \, \leqslant \;\,  4 i +2^{m+2} \, \leqslant \,
      4 \left\lfloor n/m\right\rfloor+2^{m+2} .
\end{align*}
For the last estimate, we used the fact that
$i\leqslant \left\lfloor n/m\right\rfloor$, as in the proof of
Lemma~\ref{lem:number}.
\end{proof}

If we apply Lemmas~\ref{lem:number} and \ref{lem:comp-after-algorithm}
for $t\geqslant 0$, we obtain
\[
   \sum_{\omega\in\Sigma^n}\, \sup_{x\in\langle\omega\rangle}
    \exp (t\ts \psi^{\pa}_{n}(x) )  \, \leqslant 
    \sum_{\omega'\in\Sigma_m^n} \! \! 2^{\left\lfloor n/m\right\rfloor}
    \exp \bigl( t (4 \left\lfloor n/m\right\rfloor+2^{m+2} ) \bigr) \!
    \sup_{x\in\langle\omega'\rangle\cap\XX_m}
    \! \! \exp (t\ts \psi^{\pa}_{n}(x)) \ts .
\]
This implies 
\begin{align*}
    \cP ( t\ts \psi) \, & \leqslant \lim_{n\to\infty}\myfrac{1}{n} \log 
     \Bigl( 2^{\left\lfloor n/m\right\rfloor} \exp \bigl( t ( 2^{m+2} + 
     4 \lfloor n/m \rfloor )\bigr) \! \sum_{\omega'\in\Sigma_m^n} 
       \, \sup_{x\in\langle\omega'\rangle\cap\XX_m}
       \! \exp (t\ts \psi^{\pa}_{n}(x) ) \Bigr)\\[1mm]
    & =  \lim_{n\to\infty} \frac{\left\lfloor n/m\right\rfloor\log(2)
     + t (2^{m+2} + 4 \lfloor n/m \rfloor )}{n} 
     +  \cP (t\ts \psi \ts\lvert\ts \XX_m )\\[2mm]
    & = \, \frac{\log(2) + 4 \ts t}{m} 
            + \cP (t\ts \psi\ts\lvert\ts\XX_m ) \ts .
\end{align*}
Consequently, $ \cP (t\ts \psi ) \leqslant \liminf_{m\to\infty}\cP 
(t\ts \psi\ts\lvert\ts\XX_m )$ holds for $t\geqslant 0$.

Next, we consider the case $t< 0$, where we have to show that
$\lim_{m\to\infty}\mathcal{P}\left(t\psi\lvert\mathbb{X}_m\right)=\infty$.
We let $\omega_{n,m}\coloneqq \omega_{m}^{[n]}$ denote the prefix of
$\omega_{m}\coloneqq \overline{0^m1^{m} }$ of length
$n\in \mathbb{N}$.  For $t<0$, we then have
\begin{align*}
  \mathcal{P}\left(t\psi\lvert\mathbb{X}_m\right)
  \, \geqslant &
     \lim_{n\to\infty} \myfrac{1}{n}\log
     \sup_{x\in\left<\omega_{n,m}\right>\cap\mathbb{X}_m}
     \exp \bigl( t\psi_{n} ( x )\bigr)
     \, = \, t\lim_{n\to\infty}\myfrac{1}{n}
     \inf_{x\in\left<\omega_{n,m}\right>\cap\mathbb{X}_m}
     \psi_{n}\left(x\right)\\
  \, \geqslant \,&
      t\lim_{n\to\infty}\myfrac{1}{n}
      \Biggl( \left\lfloor \myfrac{n}{m}\right
      \rfloor  \sup_{x\in\left<0^m\right>\cap\mathbb{X}_m}
      \psi_{m} (x) +m\biggr)
        \, = \, \myfrac{t}{m} \sum_{k=1}^{m}
        \psi \bigl( 2^{-k} \bigr).
\end{align*}
Using the asymptotic behaviour
$ \lim_{x\searrow 0}\frac{\psi(x)}{\log\left(x\right)}= 2 , $ a
straightforward calculation then shows
$\lim_{m \to \infty} \frac{1}{m} \sum_{k=1}^m \psi (2^{-k}) = -
\infty$, which gives the desired result.  
\end{proof}

\section{Proof of  the first part of
  Theorem~\ref{thm:MAIN-MF-1}}\label{Sec:Proof-of-Thm}

If we restrict our attention to any of the SFTs $\XX_m$, there is a
natural correspondence between the local dimension of $\nu$ and the
Birkhoff average of $\psi$ due to
Corollary~\ref{Coro:SFT-local-dim-birkhoff-relation}. Moreover,
standard multifractal formalism for H{\"o}lder continuous potentials,
compare~\cite{MR2719683}, provides us with the relation
\begin{equation}\label{eq:Haus-for-Xm}
   \dim_{ \mathrm{H},\tau} \Bigl\{ x\in\XX_m :
    \lim_{n\to\infty}\frac{\psi^{\pa}_{n}(x)}{n}
    =  \alpha\Bigr\} \, = \, \max
   \Bigl\{\frac{-p^{*}_{m} (\alpha)}{\log(2)}, 0\Bigr\},
\end{equation}
which holds for both $\tau\in \{\mathrm{E},2\}$ because the metric
spaces with the two different metrics restricted to $\XX_m$ are
bi-Lipschitz equivalent.  To show Theorem~\ref{thm:MAIN-MF-1}, we need
to make sure that these relations extend properly to the full shift
$\XX$. Intuitively, $\XX_m$ captures the relevant information about
$\nu$ (in the limit of large $m$) because it is designed in such a way
that, given $\omega \in \Sigma^n$, $\nu(\langle \omega \rangle)$ is
small if $\langle \omega \rangle \cap \XX_m = \varnothing$. In parts,
this is made more precise by
Proposition~\ref{prop:Convergence-of-Pressure-1} showing that the
restricted pressure function indeed converges to the full pressure
function. Some additional work is necessary to ensure that the
information which we obtain from $\XX_m$ with $m \in \NN$ is
consistent with the Hausdorff dimension of the full level sets.

In what follows, we will prove the two identities from
Theorem~\ref{thm:MAIN-MF-1}. We postpone the proof of all the
properties of the spectrum to Section~\ref{Sec:Proof-of-Thm-II}, as we
shall need further properties of the pressure function $p$. The latter
will be derived in Section~\ref{Sec:p-properties}.

\begin{proof}[Proof of  the first part of 
Theorem~\textnormal{\ref{thm:MAIN-MF-1}}]
Let us begin with proving the upper bounds in
\eqref{eq:Birkhoff-spectrum} and \eqref{eq:dimension-spectrum} by
giving an upper bound for the Hausdorff dimension of a sufficiently
large superset.

For arbitrary $n \in \NN$, we introduce a neighbour relation on the
set of words $\Sigma^n$ as follows. First, $\omega$ is called a
neighbour of $\omega'$ if the intersection of $\langle \omega \rangle$
and $\langle \omega' \rangle$ as subsets of $[0,1]$ consists of
precisely one point, that is, if the corresponding intervals are
adjacent in $[0,1]$. Then, for each $n \geqslant 2$ and $x\in \XX$, we
can choose a neighbour $\omega_x^{n-1}$ of $C_{n-1}(x)$ such that
$ B^{\pa}_{\mathrm{E}}(x,2^{-n}) \subset C_{n-1}(x) \cup \langle
\omega_x^{n-1} \rangle$.  Next, let
\[
   \widetilde{\psi}^{\pa}_{n} (x) \, := \, \log \Bigl( \,
    \sup_{ z \in C_{n-1}(x)} \! \exp \bigl(\psi^{\pa}_{n-1}
    (z) \bigr) \, + \! \sup_{ z'\in \langle\omega_x^{n-1}\rangle}
    \! \exp \bigl( \psi^{\pa}_{n-1} (z' \ts ) \bigr) \Bigr) ,
\]
and,  for $n\in \NN$ and $\alpha \in\RR$,  consider the set
\[
    \cG\left(n,\alpha\right) \,  := \, \bigl\{  x\in \TT : 
    \widetilde{\psi}^{\pa}_{n} (x) - n  \alpha > 0 \bigr\} .
\]

For $\alpha \in \RR$ and $\varepsilon>0$, we define
$\varGamma_n :=\left\{\omega\in\Sigma^n : \cG (n,\alpha-\varepsilon
  )\cap\langle\omega\rangle \neq\varnothing\right\}$. Then, for fixed
$q\geqslant 0$ and any
$s > \frac{q (\alpha-\varepsilon)-\cP(q\psi)}{-\log(2)}$, we obtain
\begin{align*}
            \sum _{n>m}\,
            \sum_{\omega\in\varGamma^{\pa}_{n}}
            \left|\langle\omega\rangle\right|^{s} \, & = 
            \sum _{n>m} \, \sum_{\omega\in\varGamma^{\pa}_n}
            \ee^{-sn\log(2)}
            \, \leqslant \sum _{n>m} \, 
            \sum_{\omega\in\varGamma^{\pa}_n} \ee^{-sn\log(2)}
            \! \sup_{x\in\langle\omega\rangle}\exp 
            \bigl(q\widetilde{\psi}^{\pa}_{n}(x) 
                - n q (\alpha-\varepsilon)\bigr)\\[1mm]
         & \leqslant \sum _{n>m} \, \sum_{\omega\in\Sigma^{n}}
            \sup_{x\in\langle\omega\rangle} \exp
            \bigl(-sn\log(2)+q\widetilde{\psi}^{\pa}_{n} (x)
               -n q (\alpha-\varepsilon)\bigr)\\[1mm]
         & \leqslant  \sum _{n>m} \exp \biggl( -n
           \Bigl( s\log(2) + q (\alpha-\varepsilon) - \myfrac{1}{n}
           \log  \! \sum_{\omega\in\Sigma^{n-1}} \! 3 \!
             \sup_{x\in\langle\omega\rangle}
             \ee^{q \psi^{\pa}_{n-1}(x)} \Bigr)\biggr)\\[2mm]
         & \quad \xrightarrow{\; m\to\infty \;} 0 \ts ,
\end{align*}
where we have used the definition of $-\cP\left(q\psi\right)$ and the
fact that any $\omega \in \Sigma^{n-1}$ can be a neighbour of at most
two distinct words in $\varGamma_{n-1}$.  Consequently, the
$s$-dimensional Hausdorff measure (with respect to both
$\varrho_{\mathrm{E}}$ and $\varrho_{2}$) equals zero.  Since this
holds for an arbitrary
$s > \frac{ q (\alpha-\varepsilon)- \cP (q\psi )}{-\log(2)}$ with
$q\geqslant 0$, we may conclude for the Hausdorff dimension of the
$\limsup$-set that
\[
     \dim^{\pa}_{ \mathrm{H},\tau}\limsup_{n\to \infty}
     \cG(n,\alpha - \varepsilon ) \, \leqslant \, \inf_{q\geqslant 0}
     \frac{q (\alpha-\varepsilon ) - \cP (q\psi )}{-\log(2)}
     \, = \, \frac{p^{*}(\alpha-\varepsilon)}{-\log(2)}.
\] 

Let us proceed to the final step. Making use of the immediate
inequality $\widetilde{\psi}^{\pa}_n\geqslant \psi^{\pa}_{n-1}$, and
combining the inclusions
$B^{\pa}_2(x,2^{-n}) \subseteq B^{\pa}_{\mathrm{E}}(x,2^{-n})
\subseteq C_{n-1}(x) \cup \langle \omega_{x}^{n-1}\rangle$ with the
inequality
$ \nu(C_n(x))\leqslant 2^{-n} \sup_{y \in C_n(x)}
\exp(\psi^{\pa}_{n}(y))$ from Lemma~\ref{Lemma:bounds-nu}, we can
deduce that, for each $\varepsilon > 0$, each of the sets
\[
    \Bigl\{ x \in  \TT : \limsup_{ n\to\infty}\frac{\psi^{\pa}_{n}(x)}{n}  
    \geqslant \alpha  \Bigr\} \quad \text{and} \quad
    \Bigl \{ x \in  \TT : \liminf_{ n\to\infty}\frac{\log
    \bigl(\nu(B_{\tau}(x,2^{-n}))\bigr)}{\log(2^{-n})}  
    \leqslant 1-\frac{\alpha}{\log(2)} \Bigr\} , 
\]
for $\tau\in \{2,\mathrm{E}\}$, is a subset of
$ \limsup_{n\to \infty}\cG(n,\alpha - \varepsilon)$. Hence, by the
lower semi-continuity of the Legendre transform, we obtain via
$\varepsilon \,\mbox{\footnotesize $\searrow$}\, 0$ the desired upper
bounds in \eqref{eq:Birkhoff-spectrum} and
\eqref{eq:dimension-spectrum}.

For the lower bound, fix $m\in\NN$.  Then, for
$\alpha  < \log\bigl(\frac{3}{2}\bigr)$ and 
$\tau \in \{\mathrm{E},2\}$, we have
\begin{align*}
    \birk_{ \tau} \left(\alpha\right) \, & \geqslant \, 
    \dim^{\pa}_{ \mathrm{H},\tau} \Bigl\{ x\in\XX_m : 
    \lim_{n\to\infty}\frac{\psi^{\pa}_{n}(x)}{n}
      =  \alpha \Bigr\} \\[1mm]
   & = \, \max \Bigl\{ \frac{-p^{*}_{m} (\alpha )}{\log(2)},0 \Bigr\}
     \; \xrightarrow{\; m\to\infty \;} \; \max
     \Bigl\{ \frac{-p^{*} (\alpha )}{\log(2)}, 0 \Bigr\} ,
\end{align*}
where the equality is due to \eqref{eq:Haus-for-Xm} and the
convergence part follows from
$\lim_{m \rightarrow \infty} p^{\pa}_{m}(t) = p(t)$, see
Proposition~\ref{prop:Convergence-of-Pressure-1}, in conjunction with
\cite[Prop.~4.1]{MR2719683}.  In Section~\ref{Sec:Proof-of-Thm-II}, we
shall see that $p^{\ast}\bigl(\log\bigl(\frac{3}{2}\bigr)\bigr) = 0$
and $p^{\ast}(\alpha) = \infty$ for
$\alpha > \log \bigl(\frac{3}{2}\bigr)$, so the lower bound is trivial
for $\alpha \geqslant \log \bigl(\frac{3}{2}\bigr)$.

For the remaining case, we use
Corollary~\ref{Coro:SFT-local-dim-birkhoff-relation} to deduce that,
for any $\tau \in\{\mathrm{E},2\}$ and $m \in \NN$, one has
\[
    \cF_{\tau} (\alpha) \, \supseteq \, \big\{ x \in \XX_m : 
    \locdim_{\nu,\tau}(x) = \alpha \big\} \, = \, 
    \Bigl\{ x \in \XX_m : \lim_{n \rightarrow \infty} 
      \frac{\psi^{\pa}_{n}(x)}{n} = (1 - \alpha)
    \log(2) \Bigr\} ,
\]
where $\cF_{\tau} (\alpha) = \big\{ x \in \XX : \locdim_{\nu,\tau}(x)
 = \alpha\big\}$.
This yields the lower bound in \eqref{eq:dimension-spectrum}. 
 \end{proof}

\section{Further properties of the pressure
   function}\label{Sec:p-properties}

Before we can continue with the proof of
Theorem~\ref{thm:MAIN-MF-1}, we need to establish some further
properties of the pressure function $p$, which might also be of
independent interest.

\begin{prop}\label{prop:poinwise-conv}
  The function\/ $p \colon \RR \xrightarrow{\quad} \RR\cup\{+\infty\}$
  with\/ $p(t) = \cP\left(t\ts \psi\right)$ is well defined and
  convex.  The sequence of real-analytic convex functions\/
  $p^{\pa}_{m}$ from
  Proposition~\textnormal{\ref{prop:Convergence-of-Pressure-1}}
  converges pointwise to\/ $p$; see
  Figure~\textnormal{\ref{fig:The-graph-of-pressureFunction-1}} for an
  illustration of the graph of\/ $p$.

  Further, the domain\/ $\{t\in \RR\colon p(t)<\infty\}$ of\/ $p$ is
  equal to\/ $[0,+\infty)$, and one has
\begin{align}
   p(0) \, & = \,\log(2) \ts ,\label{eq:prop-2.5aa}\\[1mm]
   p(1) \, & = \,\log(2) \ts ,\label{eq:prop-2.5aaa}\\[1mm]
    \lim_{t\to\infty} p(t) - \log(3/2) \ts t 
     & = \, 0 \ts , \label{eq:prop-2.5a}\\
    p^{\ts \prime} ( 0+ ) \, & = \, -\log(2) \ts .\label{eq:prop-2.5c}
\end{align} 
\end{prop}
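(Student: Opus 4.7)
The plan is to handle these seven claims in four groupings. First, well-definedness and convexity of $p$ together with the determination of its domain: the subadditivity argument from before \eqref{eq:PressureByInf-1} (applied with $\XX' = \XX$) shows that the defining limit exists in $[-\infty,+\infty]$. For $t\geq 0$, $\sup_{\langle\omega\rangle}\exp(t\ts\psi^{\pa}_n) = \exp(t M^{\pa}_\omega)$ with $M^{\pa}_\omega := \sup\psi^{\pa}_n|^{\pa}_{\langle\omega\rangle}$ finite, so $\log\sum^{\pa}_\omega \exp(t M^{\pa}_\omega)$ is a log-sum-exp and hence convex in $t$, and the uniform bound $M^{\pa}_\omega\leq n\log(3/2)+K$ from Lemma~\ref{lem:const} yields $p(t)\leq \log(2) + t\log(3/2) < \infty$. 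For $t<0$ and any $n\geq 2$, every cylinder contains a dyadic singularity of $\psi^{\pa}_n$ in its interior, so $\sup\exp(t\ts\psi^{\pa}_n) = +\infty$ and hence $p(t) = +\infty$. The pointwise convergence $p^{\pa}_m\to p$ is Proposition~\ref{prop:Convergence-of-Pressure-1}; real-analyticity of each $p^{\pa}_m$ is classical Ruelle--Perron--Frobenius theory applied to the H\"older continuous potential $\psi|^{\pa}_{\XX_m}$ (Lemma~\ref{lem:deriv}) on the mixing SFT $\XX_m$.

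The values $p(0) = \log(2)$ and $p(1) = \log(2)$ come next. The former is immediate from $Z^{\pa}_n(0) = 2^n$. For $p(1)$, the lower bound follows by summing \eqref{Eq:nu-upper-bound} over cylinders: $1 = \sum^{\pa}_\omega \nu(\langle\omega\rangle) \leq 2^{-n}\sum^{\pa}_\omega \sup\exp\psi^{\pa}_n$. For the upper bound, restricting to $\XX_m$ and combining bounded distortion (Lemma~\ref{Lemma:SFT-sup-inf-bounds}) with the lower inequality in \eqref{Eq:nu-upper-lower-bound} yields $\sum^{\pa}_\omega \sup^{\pa}_{\langle\omega\rangle\cap\XX_m} \exp\psi^{\pa}_n \leq KK' \ts 2^n$, so $p^{\pa}_m(1)\leq \log(2)$ for every $m$, and $p(1) = \lim^{\pa}_m p^{\pa}_m(1)\leq\log(2)$.

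For the asymptotic \eqref{eq:prop-2.5a}, the lower bound $p(t)\geq t\log(3/2)$ uses that $\{1/3,2/3\}$ is a $2$-cycle of $T$ with $\psi(1/3) = \psi(2/3) = \log(3/2)$, giving $\psi^{\pa}_n(1/3) = n\log(3/2)$ exactly and hence $Z^{\pa}_n(t)\geq \exp(t n\log(3/2))$. For the matching upper bound, each $p^{\pa}_m$ with $m\geq 2$ is the pressure of a H\"older potential on a mixing SFT whose unique $\psi$-maximising invariant measure is the $2$-cycle on $\overline{01}$ of vanishing entropy, so by standard SFT thermodynamic formalism $p^{\pa}_m(t) - t\log(3/2)\to 0$ as $t\to\infty$; combined with the quantitative estimate $p(t)\leq p^{\pa}_m(t) + (\log(2) + 4t)/m$ extracted at the end of the proof of Proposition~\ref{prop:Convergence-of-Pressure-1}, a diagonal choice $m = m(t)\to\infty$ with $m(t)/t\to\infty$ then gives the claim.

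Finally, for $p^{\ts\prime}(0+) = -\log(2)$, Jensen's inequality yields $Z^{\pa}_n(t)\geq 2^n\exp(t\bar M^{\pa}_n)$ with $\bar M^{\pa}_n := 2^{-n}\sum^{\pa}_\omega M^{\pa}_\omega$, and the elementary bound $M^{\pa}_\omega\geq 2^n\int^{\pa}_{\langle\omega\rangle}\psi^{\pa}_n\dd\lambda$ combined with the classical identity $\int_0^1\psi\dd\lambda = -\log(2)$ (via $\int_0^1\log|\sin\pi x|\dd x = -\log(2)$) yields $\bar M^{\pa}_n\geq -n\log(2)$ and hence $p(t)\geq \log(2) - t\log(2)$, giving $p^{\ts\prime}(0+)\geq -\log(2)$. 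For the matching upper bound, each analytic $p^{\pa}_m$ satisfies $p^{\ts\prime}_{\nts m}(0) = \int\psi\dd\mu_m^{(0)}$, where $\mu_m^{(0)}$ is the Parry measure on $\XX_m$; this measure is supported in $[2^{-m-1},1-2^{-m-1}]$ and converges weakly to Lebesgue, while the Lebesgue tail $\int_0^{2^{-m-1}}|\psi|\dd\lambda\to 0$, so $p^{\ts\prime}_{\nts m}(0)\to -\log(2)$. Combining the quantitative convergence above with the analytic Taylor expansion of each $p^{\pa}_m$ near $0$ and a diagonal choice $m = m(t)\to\infty$ as $t\to 0^+$ (respecting the entropy gap $p(0)-p^{\pa}_m(0)$, the error term $(\log(2)+4t)/m$, and the $m$-dependent second-derivative remainder) then transfers this bound to $p^{\ts\prime}(0+)\leq -\log(2)$. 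The main technical obstacle throughout lies in the two upper bounds in the asymptotic and the derivative claims: both require a careful interchange of the limits $m\to\infty$ with either $t\to\infty$ or $t\to 0^+$, which must be organised by a suitable diagonal choice of $m$ as a function of $t$.
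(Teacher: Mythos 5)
Your treatment of convexity, the domain, $p(0)$, $p(1)$ and the lower bounds $p(t)\geqslant t\log(3/2)$ and $p'(0+)\geqslant-\log(2)$ is sound. Indeed, the $p(1)$ argument coincides with the paper's, and your Jensen/averaging argument for $p'(0+)\geqslant-\log(2)$ (via $M_\omega\geqslant 2^n\int_{\langle\omega\rangle}\psi_n\dd\lambda$ and $\int_0^1\psi\dd\lambda=-\log(2)$) is a clean alternative to the paper's route, which instead deduces this bound from the already-proved formula $\birk(\alpha)=\max\{-p^*(\alpha)/\log(2),0\}$ together with the fact that $\cB(-\log(2))$ has full Lebesgue measure. (Minor point: for $t<0$ the singularity of $\psi_n$ sits at an endpoint of each length-$n$ cylinder, not in its interior, but since cylinders are closed the supremum is still $+\infty$.)

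The two upper bounds, however, are exactly where your proof is incomplete, and you acknowledge rather than close the gap. For \eqref{eq:prop-2.5a}, the assertion that the unique $\psi$-maximising invariant measure on $\XX_m$ is the zero-entropy two-cycle on $\overline{01}$ is not ``standard SFT formalism'': to rule out other measures with $\int\psi\dd\mu=\log(3/2)$ one needs precisely the kind of quantitative statement that points seeing a repeated letter with positive frequency lose a definite amount $\delta$ per occurrence --- this is the content of the paper's Lemma~\ref{lem:psi(Dnl)}, built on Corollary~\ref{coro:maxima-y}. Even granting it, your diagonal $m=m(t)$ requires the convergence $p_m(t)-t\log(3/2)\to0$ to hold at a rate controlled uniformly (or at least explicitly) in $m$, which you do not supply; for fixed $m$ the error term $(\log(2)+4t)/m$ diverges as $t\to\infty$, so no fixed-$m$ limit suffices. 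The same structural problem recurs for $p'(0+)\leqslant-\log(2)$: the entropy gap $p(0)-p_m(0)$ helps you, but the term $\log(2)/(m(t)\ts t)$ forces $m(t)\gg 1/t$, and you then need the $m$-dependent second-order remainder of $p_m$ at $0$ (i.e.\ a bound on $p_m''$ near $0$) not to blow up faster than your diagonal allows --- again unverified, and the variance of $\psi$ on $\XX_m$ does grow with $m$. The paper avoids both interchanges entirely by working on the full shift: for \eqref{eq:prop-2.5a} it partitions $\Sigma^n$ into the block classes $D_{\ell,n}$, applies the uniform estimate $\sup_{\langle\omega\rangle}\psi_n\leqslant K+n\log(3/2)-\delta(n-\ell)$ and a binomial sum; for \eqref{eq:prop-2.5c} it uses $p(t)\leqslant\frac1n\log Z_n(t)$, lets $t\to0^+$ via the power-mean inequality to reach the arithmetic mean $\frac{1}{n2^n}\sum_\omega\overline{\psi_{n,\omega}}$, and controls that by the truncated potential $\psi^{D(\ell)}$. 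You would need to either adopt such full-shift estimates or make your diagonal choices quantitative before the proof is complete.
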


\begin{figure}[ht]
  \includegraphics[width= 0.7\textwidth]{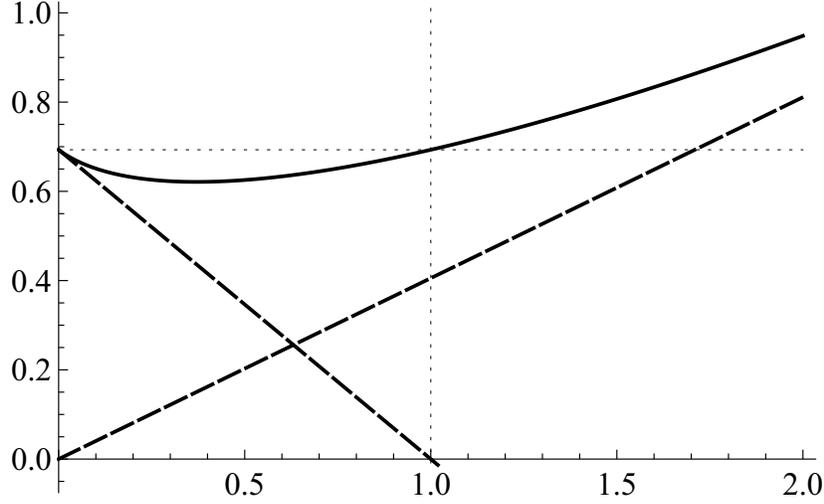}
  \caption{The graph of the pressure function $p$ (solid line) with
    the two asymptotes $x\protect\mapsto \log(3/2) \ts x$ 
    and $x\protect\mapsto\left(1-x\right)\log(2)$ (dashed lines). The
    dotted lines are added to illustrate $p(0) = p(1) =
    \log(2)$.\label{fig:The-graph-of-pressureFunction-1}}
\end{figure}

\begin{proof}
  The pointwise convergence follows from
  Proposition~\ref{prop:Convergence-of-Pressure-1}, while convexity
  and analyticity of the functions $p^{\pa}_{m}$ follow from the
  general thermodynamic formalism for H\"older continuous functions;
  compare \cite{DK}. Convexity of $p$ follows from the fact that the
  pointwise limit of convex functions is again convex.
  
  Next, $p (t)=\infty$ for $t<0$ follows from
\[
    \myfrac{1}{n} \log \sum_{\omega\in\Sigma^{n}}
    \sup_{x\in\langle\omega\rangle} \exp (t\ts \psi^{\pa}_{n}(x))
    \, \geqslant \, \myfrac{1}{n} \log 
    \! \sup_{x\in\langle0^n \rangle} \!
    \exp (t\ts \psi^{\pa}_{n}(x)) \, = \, \infty \ts .
\]
The finiteness of $p$ for $t\in [0,+\infty)$
follows from the obvious estimate $p(t)\leqslant \log(2) \ts (1+t)$
with equality for $t=0$, the latter giving \eqref{eq:prop-2.5aa}.

To establish \eqref{eq:prop-2.5aaa}, we follow the approach of
\cite[p.~19]{Bow}, where we first prove $p(1)\geqslant \log(2)$.  For
$x\in\XX$, Lemma~\ref{Lemma:bounds-nu} implies
\[
     \frac{\nu(C_n(x))}{\exp \bigl(-n\log(2)+
     \sup_{y\in C_n (x)}\psi^{\pa}_{n}(y) \bigr)} 
     \, \leqslant \, 1 \ts .
\]
Summing the measure over all possible cylinder sets of length $n$
gives $1$ and thus
 \[
      \exp \Bigl(-n\log(2)+\log \sum_{\omega\in\Sigma^n}
      \sup_{x\in\langle\omega\rangle}\!
      \exp(\psi^{\pa}_{n}(x)) \Bigr) \, \geqslant \, 1 \ts ,
 \]
  and we get
\[
     \myfrac{1}{n}\log  \! \sum_{\omega\in\Sigma^n}
     \sup_{x\in\langle\omega\rangle} \!
     \exp (\psi^{\pa}_{n}(x)) \, \geqslant \, \log(2) \ts .
 \]
 Letting $n \to \infty$ then gives the first inequality.

 Next, we prove $p(1)\leqslant\log(2)$. Let $x\in\XX_m$ and
 $y\in C_n(x)\cap \XX_m$. Then, a combination of 
 Lemmas~\ref{Lemma:SFT-sup-inf-bounds} and \ref{Lemma:bounds-nu} 
 shows that there exists an $R>0$ such that
 \[
     \frac{\nu(C_n(x))}{\exp \bigl(-n\log(2)+
     \sup_{y\in C_n(x)\cap\XX_m}\psi^{\pa}_{n}(y)\bigr)} 
     \, \geqslant \, R \ts .
\]
Summing the measure of all possible cylinder sets in $\XX_m$ of length
$n$ gives a number $\leqslant 1$. Consequently,
\[
     R  \exp \Bigl(-n\log(2)+\log \sum_{\omega\in\Sigma^n}
     \sup_{x\in\langle\omega\rangle\cap\XX_m}\!
     \exp(\psi^{\pa}_{n}(x)) \Bigr) \, \leqslant \, 1 \ts ,
\]
which implies 
\[
    \myfrac{1}{n} \log \sum_{\omega\in\Sigma^n}
    \, \sup_{x\in\langle\omega\rangle\cap\XX_m}
    \! \exp(\psi^{\pa}_{n}(x))
    \, \leqslant \, \log(2)-\frac{ \log (R)}{n}.
\]
Letting $n \to \infty$ gives
$\cP (\psi\ts\lvert\ts \XX_m )\leqslant \log(2)$.
Proposition~\ref{prop:Convergence-of-Pressure-1} now gives the second
inequality.

Let us next establish \eqref{eq:prop-2.5a}.  Observe first that, by an
application of Proposition~\ref{maxBeta}, we have
\[
    p (t)  \, \geqslant \lim_{n\to\infty}\myfrac{1}{n}\log 
    \exp \bigl( t \psi^{\pa}_{n} ( \overline{01} \, ) \bigr)
     \, = \, \log(3/2) \, t
\]
for all $t>0$. Hence, it remains to show 
\[
     \lim_{t\to\infty}p(t) - \log(3/2) \, t \, \leqslant \, 0 \ts .
\]
We will prove this inequality in a series of lemmas, following some
ideas developed in \cite{KS}.

\begin{lemma}\label{lem:equiv-to-prev-est}
  Let\/ $y$ be as in Eq.~\eqref{eq:def-y-hat-y}.  Then, there exists an\/
  $\varepsilon>0$ such that
 \[
     \bigl( \psi+\psi^{(2)} \bigr) (y)
     \,\geqslant\, \bigl( \psi+\psi^{(2)}\bigr)
     (011z ) + \varepsilon 
 \]
 holds for all\/ $z\in\XX$.  Furthermore, for all\/ $k\in\NN$ and\/
 $z\in\XX$, we have
 \[
      \bigl(\psi+\psi^{(2)} \bigr) (y)
     \,\geqslant\, \bigl( \psi+\psi^{(2)} \bigr)
     (y^{[2k]}1z ) \ts .
 \]
 \end{lemma}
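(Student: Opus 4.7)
The plan is to reduce both assertions to a single monotonicity statement about the function $g := \psi + \psi^{(2)}$, namely that $g$ is strictly decreasing on $\bigl[\myfrac{1}{3}, \myfrac{1}{2}\bigr)$. To establish this, I would start from \eqref{eq:psi-deriv} and simplify using $1-\cos(2\pi x) = 2\sin^{2}(\pi x)$ and $\sin(2\pi x) = 2\sin(\pi x)\cos(\pi x)$ to get the compact form $\psi^{\ts\prime}(x) = 2\pi\cot(\pi x)$. Combining this with the double-angle identity $\cot(2\theta) = \tfrac{1}{2}(\cot\theta - \tan\theta)$, the derivative of $g(x) = \psi(x) + \psi(2x)$ collapses to
\[
  g^{\ts\prime}(x) \, = \, \psi^{\ts\prime}(x) + 2\ts \psi^{\ts\prime}(2x) \, = \, 2\pi \bigl(2\cot(\pi x) - \tan(\pi x) \bigr).
\]
On $\bigl[\myfrac{1}{3}, \myfrac{1}{2}\bigr)$, one has $\tan(\pi x) \geq \sqrt{3} > \sqrt{2}$, hence $\tan^{2}(\pi x) > 2$ and $g^{\ts\prime}(x) < 0$ throughout.

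Next, I would locate the cylinders $\langle y^{[2k]} 1 \rangle$ as dyadic intervals. Since $y^{[2k]} = 0101\cdots 01$ represents the dyadic number $a_k := (1-4^{-k})/3$, the cylinder $\langle y^{[2k]} \rangle$ is the interval $[a_k, a_k + 2^{-2k}]$, and its right half $\langle y^{[2k]} 1 \rangle$ equals $[L_k, R_k]$ with
\[
  L_k \, = \, \myfrac{1}{3} + \myfrac{1}{6\cdot 4^{k}} \qquad \text{and} \qquad R_k \, = \, \myfrac{1}{3} + \myfrac{2}{3\cdot 4^{k}} \ts .
\]
In particular $y = \myfrac{1}{3} < L_k$ for every $k \in \NN$, and $R_k \leq \myfrac{1}{2}$ with equality precisely when $k = 1$.

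With monotonicity and these locations at hand, the second assertion is immediate. Indeed, $g$ is continuous and strictly decreasing on $[L_k, R_k) \subseteq \bigl[\myfrac{1}{3}, \myfrac{1}{2}\bigr)$, with $g(x) \to -\infty$ as $x \to \myfrac{1}{2}$ in the exceptional case $R_k = \myfrac{1}{2}$, so $\sup_{x \in [L_k, R_k]} g(x) = g(L_k)$. Strict monotonicity on $[\myfrac{1}{3}, L_k]$ then yields $g(y^{[2k]} 1 z) \leq g(L_k) < g\bigl(\myfrac{1}{3}\bigr) = g(y)$ for every $z \in \XX$.

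For the first assertion, I specialise to $k=1$: the cylinder $\langle 011 \rangle$ is $\bigl[\myfrac{3}{8}, \myfrac{1}{2}\bigr]$, and the above argument gives $\sup_{z \in \XX} g(011z) = g\bigl(\myfrac{3}{8}\bigr) = \log\bigl(1 + \tfrac{1}{\sqrt{2}}\bigr)$. Setting
\[
  \varepsilon \, := \, g(y) - g\bigl(\myfrac{3}{8}\bigr) \, = \, 2\log\bigl(\myfrac{3}{2}\bigr) - \log\bigl(1 + \tfrac{1}{\sqrt{2}}\bigr) \, > \, 0
\]
supplies the required uniform gap. The only genuinely computational step in this approach is the derivative simplification establishing $g^{\ts\prime} < 0$ on $\bigl[\myfrac{1}{3}, \myfrac{1}{2}\bigr)$; everything else is bookkeeping of cylinder endpoints and an elementary continuity argument.
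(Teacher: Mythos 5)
Your proof is correct, but it takes a genuinely different route from the paper. The paper never computes the derivative of $\psi+\psi^{(2)}$; instead it uses the shift identities $\psi^{(2)}(y)=\psi(y)$ and $\psi^{(2)}(y^{[2k]}1z)=\psi(y^{[2k-1]}0\widehat{z}\ts)$ to rewrite both claims as a bound on $\psi(y^{[2k-1]}0\widehat{z}\ts)+\psi(y^{[2k]}1z)$, then pushes the two arguments to the extreme points $y^{[2k]}\overline{0}$ and $y^{[2k]}1\overline{0}$ of their cylinders using the monotonicity of $\psi$, and finally applies concavity of $\psi$ (a Jensen step) via the convex combination $y=\frac{1}{3}y^{[2k]}\overline{0}+\frac{2}{3}y^{[2k]}1\overline{0}$; the gap $\varepsilon$ comes from the strictness of that concavity estimate at $k=1$. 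Your argument instead establishes the single clean fact that $\psi+\psi^{(2)}$ is strictly decreasing on $\bigl[\myfrac{1}{3},\myfrac{1}{2}\bigr)$ via $\psi^{\ts\prime}(x)=2\pi\cot(\pi x)$ and the double-angle identity, and then reads off both claims from the explicit endpoints $L_k=\myfrac{1}{3}+\myfrac{1}{6\cdot 4^k}$ of the cylinders $\langle y^{[2k]}1\rangle$, which all sit strictly to the right of $\myfrac{1}{3}$. All the computations check out (including $g^{\ts\prime}<0$ from $\tan^2(\pi x)\geqslant 3>2$, the endpoint formulas, the singular case $R_1=\myfrac{1}{2}$, and the explicit value $\varepsilon=2\log(3/2)-\log(1+1/\sqrt{2})>0$). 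What your approach buys is an explicit, optimal constant $\varepsilon$ and a self-contained calculus argument; what the paper's approach buys is consistency with the technique of Lemma~\ref{lem:prev-est} (monotonicity and concavity of $\psi$ on cylinders), which is reused throughout Sections~\ref{Sec:Max-exponent} and \ref{Sec:p-properties} and avoids any trigonometric simplification.
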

 
\begin{proof}
  The proof can be done similarly to that of Lemma~\ref{lem:prev-est}.
  An argument analogous to the one leading to
  \eqref{eq:prev-est-1} yields $\psi^{(2)}(y)=\psi(y)$ and
  $\psi^{(2)} \bigl( y^{[2k]}1z \bigr)= \psi \bigl( y^{[2k-1]}0
  \widehat{z} \ts \bigr)$.  Consequently, the two estimates claimed
  are equivalent to
\[
     \psi\bigl(y^{[2k-1]}0\widehat{z}\ts \bigr)
     +\psi\bigl(y^{[2k]}1z\bigr) \, \leqslant\, 2 \psi(y) - W,
\]
where $W=\varepsilon$ if $k=1$ and $W=0$ otherwise. 

Observe that we have  
\begin{align*}
     \psi\bigl(y^{[2k-1]}0\widehat{z}\ts \bigr) 
        +\psi\bigl(y^{[2k]}1z\bigr) \, & =\, 
     \psi\bigl(y^{[2k-1]}0\widehat{z}\ts\bigr)
        +\psi\bigl(y^{[2k-1]}11z\bigr) \\[2mm]
     & \leqslant\, \psi\bigl(y^{[2k-1]}0\overline{1}\ts\bigr)
        +\psi\bigl(y^{[2k-1]}11\overline{0}\ts \bigr) 
     \, =\, \psi\bigl(y^{[2k]}\overline{0}\ts \bigr)
        +\psi\bigl(y^{[2k]}1\overline{0}\ts \bigr)\\[1mm]
     & = \, 2 \Bigl( \myfrac{1}{3} \psi
              \bigl(y^{[2k]}\overline{0}\ts\bigr)
        + \myfrac{2}{3}\psi\bigl(y^{[2k]}1\overline{0}\ts \bigr) \Bigr) 
        - W^{\pa}_{k}  \, \leqslant \, 2 \psi(y) - W^{\pa}_{k} , 
\end{align*}
where the constant $W^{\pa}_{k}$ is given by
\[
    W^{\pa}_{k}  \, = \,  \myfrac{1}{3} \Bigl( 
    \psi \bigl( y^{[2k]}1\overline{0}\ts \bigr) - \psi 
    \bigl( y^{[2k]}\overline{0}\ts \bigr) \Bigr) \, > \, 0 \ts .
\]
The first inequality now follows from the facts that the cylinder
$\langle y^{[2k-1]}0 \rangle$ is coarser than
$\langle y^{[2k-1]}11\rangle$ and that $\psi^{\ts\prime}$ is positive
and decreasing on $ \bigl[ 0, \frac{1}{2} \bigr]$.  The last step is
then a consequence of the concavity of $\psi$ together with the
observation that
$y = \frac{1}{3} y^{[2k]}\overline{0} + \frac{2}{3}
y^{[2k]}1\overline{0}$.
\end{proof}

\begin{coro}\label{coro:maxima-y}
  Let\/ $y$ be the alternating sequence from Eq.~\eqref{eq:def-y-hat-y}
  and let\/ $\varepsilon>0$ be the constant  from
  Lemma~\textnormal{\ref{lem:equiv-to-prev-est}}.  Then, for all\/
  $n\in\NN$ and\/ $z\in \XX$, one has
 \[
     \psi^{\pa}_{n} (y) \,\geqslant\, \psi^{\pa}_{n}
     \bigl(y^{[n]}\widehat{y}^{\pa}_{n+1} z\bigr)+\varepsilon.
 \]
\end{coro}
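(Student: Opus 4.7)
The plan is to mirror the strategy of Corollary~\ref{Coro:alternating}, but now with the comparison point being $y$ itself, so that Lemma~\ref{lem:equiv-to-prev-est} applies termwise and supplies a uniform strict gap $\varepsilon$. I would split into even and odd values of $n$ and handle them in sequence, with the odd case reducing to the even one via the recursion from Fact~\ref{fact:psi-props}.

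For even $n = 2m$, the target is $w := y^{[2m]} \ts 1 \ts z$ (because $y^{\pa}_{2m+1} = 0$). Grouping consecutive summands in $\psi^{\pa}_{2m}(w) = \sum_{\ell=0}^{2m-1} \psi(2^{\ell} w)$ into pairs yields $\psi^{\pa}_{2m}(w) = \sum_{j=0}^{m-1} (\psi + \psi^{(2)})(2^{2j} w)$. Since $y$ has period two, the shift by $2^{2j}$ preserves the $y$-prefix, so $2^{2j} w = y^{[2(m-j)]} \ts 1 \ts z$; re-indexing with $k = m - j$ gives $\psi^{\pa}_{2m}(w) = \sum_{k=1}^{m} (\psi + \psi^{(2)})(y^{[2k]} \ts 1 \ts z)$. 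Lemma~\ref{lem:equiv-to-prev-est} bounds each summand by $(\psi + \psi^{(2)})(y) = 2 \psi(y)$, with the $k = 1$ term additionally reduced by $\varepsilon$ (via its first inequality, since $y^{[2]} \ts 1 \ts z = 011 z$). Combined with the identity $\psi^{\pa}_{2m}(y) = 2m \psi(y)$, which is immediate from the period-two structure and $\psi(\widehat{y}) = \psi(y)$, this settles the even case.

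For odd $n = 2m+1$ with $m \geqslant 1$, the target is $w = y^{[2m+1]} \ts 0 \ts z$ (because $\widehat{y}^{\pa}_{2m+2} = 0$). Applying the recursion $\psi^{\pa}_{2m+1}(w) = \psi(w) + \psi^{\pa}_{2m}(2w)$, a direct bit-shift computation gives $2w = \widehat{y}^{[2m]} \ts 0 \ts z$, and the reflection symmetry identifies this with $\psi^{\pa}_{2m}(y^{[2m]} \ts 1 \ts \widehat{z})$, which the even case already bounds by $2m \psi(y) - \varepsilon$. For the remaining $\psi(w)$ term, I would observe that $w < y$ as numbers in $\bigl[ 0, \frac{1}{2} \bigr]$: they agree through position $2m+1$, but at position $2m+2$ the point $w$ has a $0$ while $y$ has a $1$, so monotonicity of $\psi$ on this interval gives $\psi(w) \leqslant \psi(y)$. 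Adding the two estimates yields $(2m+1)\psi(y) - \varepsilon = \psi^{\pa}_{2m+1}(y) - \varepsilon$, as required.

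It remains to treat the degenerate base case $n = 1$, where the odd recursion reduces to the bare estimate $\psi(00z) \leqslant \psi(y) - \varepsilon$. Since $\sup_z \psi(00z) = \psi(1/4) = 0$ while $\psi(y) = \log(3/2) > 0$, there is a uniform positive gap, and one shrinks $\varepsilon$ once and for all to the minimum of the one supplied by Lemma~\ref{lem:equiv-to-prev-est} and this base-case gap. The genuine obstacle---extracting a uniform strict $\varepsilon$-gap from concavity of $\psi$ and the monotonicity of $\psi^{\ts\prime}$ on $\bigl[0, \frac{1}{2}\bigr]$---is already packaged into Lemma~\ref{lem:equiv-to-prev-est}; everything above is bookkeeping, and I expect the most error-prone piece to be the symmetry-and-bit-shift identity that links the odd case to the even one.
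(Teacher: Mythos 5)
Your proposal is correct and follows essentially the same route as the paper: the even case is handled by pairing the Birkhoff sum into $(\psi+\psi^{(2)})$ blocks along the $2$-periodic prefix and applying Lemma~\ref{lem:equiv-to-prev-est} termwise (with the $\varepsilon$ extracted from the $k=1$ term $011z$), and the odd case reduces to the even one via the recursion $\psi^{\pa}_{2m+1}(w)=\psi(w)+\psi^{\pa}_{2m}(2w)$, the reflection symmetry, and monotonicity of $\psi$ on $\bigl[0,\frac{1}{2}\bigr]$. Your explicit treatment of the base case $n=1$ (shrinking $\varepsilon$ if necessary) is if anything slightly more careful than the paper's.
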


\begin{proof}
  The proof is similar to that of Corollary~\ref{Coro:alternating}.
  To prove our claim, we employ Lemma~\ref{lem:equiv-to-prev-est} for
  $n=1$.  Analogously as in \eqref{eq:psi2m}, we obtain
\begin{align*}
    \psi^{\pa}_{2m} \bigl(y\bigr) \, & = \,
    \left( \bigl( \psi + \psi^{(2)} \bigr) 
     + \bigl( \psi^{(3)} + \psi^{(4)} \bigr) + \ldots + \bigl( \psi^{(2m-1)} 
     + \psi^{(2m)} \bigr) \right) \bigl( y \bigr)
     \,= \, m \bigl( \psi + \psi^{(2)} \bigr) \bigl( y \bigr) \\[1mm] 
     & \geqslant \, \bigl( \psi + \psi^{(2)} \bigr) 
       \bigl( y^{[2m]}1z\ts \bigr) 
     + \bigl( \psi + \psi^{(2)} \bigr) \bigl( 
     y^{[2m-2]}1z \ts \bigr)
     + \ldots + \bigl( \psi + \psi^{(2)} \bigr)
     \bigl( y^{[2]}1z \ts \bigr)+\varepsilon \\[1mm]
     & = \, \psi^{\pa}_{2m} \bigl( y^{[2m]}1 z \ts \bigr)
        + \varepsilon \ts .
\end{align*}
From this result, we can also proceed to $n = 2m+1$ with $m \in \NN$,
\begin{align*}
    \psi^{\pa}_{2m+1} \bigl( y \bigr) \, & = \,
    \psi^{\pa}_{2m} \bigl( \widehat{y} \ts \bigr) + \psi \bigl( y \bigr)
    \,=\, \psi^{\pa}_{2m} \bigl( y \bigr) + \psi \bigl( y \bigr)
         \, \geqslant \, \psi^{\pa}_{2m} \bigl(
        y^{[2m]}1\widehat{z} \ts \bigr)+\varepsilon
       + \psi \bigl( y^{[2m+1]}0\ts z \ts \bigr)  \\[1mm] 
    & = \,\psi^{\pa}_{2m} \bigl( \widehat{y}^{\ts [2m]}0 z \ts \bigr)
       + \varepsilon + \psi \bigl( y^{[2m+1]}0\ts z \ts \bigr)
      \, = \, \psi^{\pa}_{2m+1} \bigl( y^{[2m+1]}0\ts z \ts \bigr)
      + \varepsilon \ts ,
 \end{align*}
which completes the argument.
\end{proof}

Next, we split the set $\Sigma^n$ into the sets $D_{\ell,n}$, where  
\[
     D_{\ell,n} \, := \, \Bigl\{ a^{r^{\pa}_1} I
      \bigl( a^{r^{\pa}_2} \bigr)
      \cdots I^{\ell-2} \bigl( a^{r^{\pa}_{\ell-1}}\bigr) 
      I^{\ell-1} \bigl(a^{r^{\pa}_{\ell}}\bigr) : a\in\{0, 1\}, 
      \, r^{\pa}_k \in\NN, \sum_{k=1}^{\ell} r^{\pa}_k = n \Bigr\}
\]
with $I$ defined as in Proposition~\ref{prop:pointwise-compare}.  Put
differently, we consider words of length $n$ that consist of $\ell$
blocks of consecutive $0$s and $1$s.  With the help of
Corollary~\ref{coro:maxima-y}, we can now prove the following result.

\begin{lemma}\label{lem:psi(Dnl)}
  There exist constants $K,\delta > 0$ such that
\[
     \max_{\omega\in D_{\ell,n}}\,
       \sup_{x\in\langle\omega\rangle}\psi^{\pa}_{n}(x) 
       \,\leqslant\, K + n \log(3/2)- \delta(n - \ell) 
\]
holds uniformly for all\/ $n,\ell\in\NN$ with $\ell\leqslant n$.
\end{lemma}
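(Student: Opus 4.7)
The plan is to proceed by induction on the number of defects $d := n - \ell$, that is, the number of positions $k \in \{1, \ldots, n-1\}$ with $\omega^{\pa}_{k} = \omega^{\pa}_{k+1}$. The key mechanism will be Corollary~\ref{coro:maxima-y}, which extracts a uniform loss $\varepsilon > 0$ each time a defect appears at the end of an alternating prefix; the constants $K$ (inherited from Lemma~\ref{lem:const}) and $\delta := \varepsilon$ will then suffice.

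For the base case $d = 0$, the word $\omega$ is alternating, so either $\omega = y^{[n]}$ or $\omega = \widehat{y}^{\,[n]}$, and the statement reduces to $\max_{x \in [0,1]} \psi^{\pa}_n(x) \leqslant n \log(3/2) + K$, which is exactly Lemma~\ref{lem:const} combined with the identity $\psi^{\pa}_n(y) = n \log(3/2)$.

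For the inductive step, suppose the result holds for all words carrying strictly fewer than $d \geqslant 1$ defects, and let $\omega \in D_{\ell,n}$ with $n-\ell = d$. Using $\psi^{\pa}_n(\widehat{x}) = \psi^{\pa}_n(x)$ (Fact~\ref{fact:psi-props}), I may assume that $\omega$ begins with $0$. Let $j_1 \in \{1,\ldots,n-1\}$ denote the leftmost defect; minimality forces $\omega^{[j_1]} = y^{[j_1]}$ and $\omega^{\pa}_{j_1+1} = \widehat{y}_{j_1+1}$, so $\omega^{[j_1+1]} = y^{[j_1]}\widehat{y}_{j_1+1}$. Taking the supremum over tails in Corollary~\ref{coro:maxima-y} (applied with $n$ replaced by $j_1$) yields
\[
  \sup_{x \in \langle y^{[j_1]}\widehat{y}_{j_1+1}\rangle} \psi^{\pa}_{j_1}(x)
  \; \leqslant \; \psi^{\pa}_{j_1}(y) - \varepsilon \; = \; j_1 \log(3/2) - \varepsilon,
\]
and since $\langle\omega\rangle$ is contained in this larger cylinder, the same bound controls $\sup_{\langle\omega\rangle}\psi^{\pa}_{j_1}$. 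Decomposing $\psi^{\pa}_n(x) = \psi^{\pa}_{j_1}(x) + \psi^{\pa}_{n-j_1}(T^{j_1}x)$, defining $\omega' := \omega^{\pa}_{j_1+1}\cdots\omega^{\pa}_{n}$ (onto which $T^{j_1}$ maps $\langle\omega\rangle$), and applying the induction hypothesis to $\omega'$ (or to $\widehat{\omega'}$ if its leading letter equals $1$) delivers
\[
  \sup_{x' \in \langle\omega'\rangle} \psi^{\pa}_{n-j_1}(x')
  \; \leqslant \; K + (n-j_1)\log(3/2) - \delta(n-\ell-1),
\]
after a direct check that $\omega'$ carries exactly $d-1$ defects --- the one at position $j_1$ of $\omega$ being the only one lost in the shift. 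Combining the two supremum bounds via $\sup(f+g)\leqslant\sup f+\sup g$ and choosing $\delta = \varepsilon$ collapses the right-hand side to $K + n\log(3/2) - \delta(n-\ell)$, which is the desired estimate.

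The main technical hurdle will be the defect counting: I must verify carefully that every defect of $\omega$ at a position $\geqslant j_1+1$ is faithfully transported into $\omega'$, that the block structure forces $\ell \geqslant j_1$ so that $\omega' \in D_{\ell-j_1+1,\,n-j_1}$ is genuinely a member of a $D$-class with strictly smaller defect count, and that the symmetry trick correctly normalises the leading letter of $\omega'$ when needed in order to invoke the induction hypothesis.
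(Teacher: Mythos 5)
Your proof is correct and is essentially the paper's argument in inductive clothing: the paper splits $\omega$ directly into the blocks ending at each defect, charges $-\varepsilon$ per block via Corollary~\ref{coro:maxima-y} and $+K$ for the final alternating tail via Lemma~\ref{lem:const}, which is exactly what your recursion unrolls to. The only substantive difference is the length-one block (i.e.\ $j_1=1$) case, which the paper treats separately by bounding $\sup_x\psi(00x)=0$ and therefore sets $\delta=\min\{\varepsilon,\log(3/2)\}$, whereas you invoke Corollary~\ref{coro:maxima-y} at $n=1$ and take $\delta=\varepsilon$ --- legitimate as that corollary is stated for all $n\in\NN$.
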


\begin{proof}
  Let $\omega\in D_{\ell,n}$, so
  $\card\{i\leqslant n :
  \omega^{\pa}_{i-1}=\omega^{\pa}_{i}\}=n-\ell$. We set
  $s(0):= s(0,\omega):= 0$, together with
  $s(k+1):= s(k+1,\omega):= \min\left\{i>s(k) :
    \omega^{\pa}_i=\omega^{\pa}_{i+1}\right\}$ for $k <n-\ell$ and
  $s(n-\ell+1):=n$. Then, we have
\[
      \sup_{x\in\langle\omega\rangle}\psi^{\pa}_{n}(x)
      \,\leqslant \sum_{k=1}^{n-\ell+1} \sup_{x\in\sigma^{s(k-1)}
      (\langle\omega\rangle)}\! \psi^{\pa}_{s(k)-s(k-1)}(x) \ts .
\]
By the definition of $s$ and the symmetry of $\psi$, we have  
\[
    \sup_{x\in\sigma^{s(k-1)} (\langle\omega\rangle)} \!
    \psi^{\pa}_{s(k)-s(k-1)}(x) \, \leqslant\, \sup_{x\in\XX}
    \psi^{\pa}_{s(k)-s(k-1)} \bigl(y^{[s(k)-s(k-1)]}
    \widehat{y}^{\pa}_{s(k)-s(k-1)+1} x \bigr) 
\]
for any $k\leqslant n-\ell$. Next, when $s(k)-s(k-1)=1$, we get
\[
       \sup_{x\in\sigma^{s(k-1)} (\langle\omega\rangle)} 
       \psi^{\pa}_{s(k)-s(k-1)}(x)
       \, \leqslant \, \sup_{x\in\XX} \psi (00x)
       \, = \, 0 \ts .
\]
     If $s(k)-s(k-1)>1$, Corollary~\ref{coro:maxima-y} implies
\[
    \sup_{x\in\sigma^{s(k-1)} (\langle\omega\rangle)} \! 
      \psi^{\pa}_{s(k)-s(k-1)}(x)
    \,  \leqslant \, \psi^{\pa}_{s(k)-s(k-1)}(y)-\varepsilon 
    \,  = \, (s(k)-s(k-1)) \log(3/2) - \varepsilon \ts .
\]
Finally, Corollary~\ref{coro:maxima} and Lemma~\ref{lem:const}, with
the constant $K>0$ defined there, give
\begin{align*} 
     \sup_{x\in\sigma^{s(n-\ell)} (\langle\omega\rangle)}
     \psi^{\pa}_{n-s(n-\ell)}(x)
     \, & = \, \sup_{x\in\XX}\psi^{\pa}_{n-s(n-\ell)}
     \bigl(y^{[n-s(n-\ell)]}x \bigr)
     \, = \, \sup_{x\in\XX} \psi^{\pa}_{n-s(n-\ell)} (x) \\[1mm]
     &\leqslant\,\sup_{x\in\XX} \psi^{\pa}_{n-s(n-\ell)}
        \bigl( \overline{01} \ts \bigr) + K
      \, = \, (n-s(n-\ell)) \log(3/2) + K \ts .
\end{align*}

Combining these five estimates and setting
$\delta = \min \left\{ \varepsilon, \log \bigl( \frac{3}{2}\bigr)
\right\}$ yields
\begin{align*}
     \sup_{x\in\langle\omega\rangle}\psi^{\pa}_{n}(x)
     \,&\leqslant\, \sum_{k=1}^{n-\ell} \bigl( (s(k)-s(k-1))
     \log(3/2)-\delta \bigr) 
      \; + (n-s(n-\ell) ) \log(3/2) +K\\[2mm]
    &=\, n \log(3/2)-(n-\ell)\delta+K \ts ,
\end{align*}
and thus the statement of the lemma.
\end{proof}

For the final step, we set $\gamma:= \log\bigl( \frac{3}{2}\bigr)$. Using
Lemma~\ref{lem:psi(Dnl)} together with
$\card \bigl(D_{\ell,n}\bigr) = \binom{n-1}{\ell-1}$ to justify the
first inequality gives
\begin{align*}
   \myfrac{1}{n} \log \sum_{\omega\in \Sigma^n}
   \, & \sup_{x\in\langle\omega\rangle}
     \exp(t\ts \psi^{\pa}_{n}(x))-t\gamma
   \, = \, \myfrac{1}{n} \log \sum_{\ell=1}^n
        \sum_{\omega\in D_{\ell,n}}
        \sup_{x\in\langle\omega\rangle}
        \exp(t\ts \psi^{\pa}_{n}(x))-t\gamma \\
  & \leqslant\, \myfrac{ tK}{n}+\myfrac{1}{n}\log \sum_{\ell=1}^n
        \binom{n-1}{\ell-1} \exp (-(n-\ell)\delta t ) \\[1mm]
  & =\, \myfrac{ tK}{n} + \myfrac{1}{n}\log (1+\exp
        (-\varepsilon t ))^{n-1}
  \,=\, \myfrac{ tK}{n}+\myfrac{n-1}{n}\log (1+\exp
         (-\delta t )) \ts .
\end{align*}
Letting first $n\to \infty$ and then $t\to \infty$ gives the desired
result.

Let us finally prove \eqref{eq:prop-2.5c}. To see that
$p^{\ts \prime}(0) \geqslant -\log(2)$, we note that, by
Theorem~\ref{thm:MAIN-MF-1}, the inequality
$p^{\ts \prime} (0) < -\log(2)$ would imply that $b(-\log(2))<1$,
which contradicts the fact that $\cB(-\log(2))$ as defined in
Eq.~\eqref{eq:Birk-spec-and-level-sets} has full Lebesgue measure.

Hence, it remains to show that
$p^{\ts \prime} (0) \leqslant -\log(2)$.  Recall the definition of
$\psi^{D(\ell)}$ from \eqref{eq:def-psi-m} and set
$\psi_{n}^{D(\ell)}(x):= \sum_{k=1}^n\psi^{D(\ell)} (2^{k-1}x )$,
where $W^{\pa}_{\nts\ell}$ denotes the bounded distortion constant for
$\psi^{D(\ell)}$.  For $\omega\in\Sigma^{n}$, we define
$\overline{\psi^{\pa}_{n,\omega}}:= \sup_{x\in\langle \omega
  \rangle}\psi^{\pa}_{n}(x)$ and
$\underline{\psi^{\pa}_{n,\omega}}:= \inf_{x\in\langle \omega
  \rangle}\psi^{\pa}_{n}(x)$, and analogously
$\overline{\psi^{D(\ell)}_{n,\omega}}$ and
$\underline{\psi^{D(\ell)}_{n,\omega}}$.
Eq.~\eqref{eq:PressureByInf-1} and the properties of the H\"older mean
then imply that, for all $n,\ell \in \NN$,
\begin{align*}
    p^{\ts \prime} (0+) \, & \leqslant \, 
     \inf_{t\geqslant 0} \frac{  {n^{-1}} \log \sum_{\omega\in \Sigma^{n}}
     \exp \bigl( t \overline{\psi^{\pa}_{n,\omega}}\ts \bigr)
            -\log(2) }{t} \\[1mm] 
   &  = \, \inf_{t\geqslant 0} \myfrac{1}{n}\log 
   \Bigl( 2^{-n}\sum_{\omega\in\Sigma^{n}} 
   \exp \bigl(\ts\overline{\psi^{\pa}_{n,\omega}}\ts\bigr)^{t}  \Bigr)^{1/t}
   \, = \, \myfrac{1}{n}\log \biggl( \, \prod_{\omega\in\Sigma^{n}}
     \exp \bigl(\ts\overline{\psi^{\pa}_{n,\omega}} \ts\bigr)
     \! \biggr)^{2^{-n}} \\[1mm] 
         & \, = \, \myfrac{1}{n\ts 2^{n}}
           \sum_{\omega\in\Sigma^{n}} \overline{\psi^{\pa}_{n,\omega}}
           \, \leqslant \, \myfrac{1}{n\ts 2^{n}} \sum_{\omega\in\Sigma^{n}}
           \overline{\psi^{D(\ell)}_{n,\omega}} \, \leqslant \,
           \myfrac{1}{n\ts 2^{n}} \sum_{\omega\in\Sigma^{n}}
           \Bigl(\ts \underline{\psi^{D(\ell)}_{n,\omega}} \,
            + W^{\pa}_{\! \ell} \! \nts \Bigr)\\[1mm]
         & \, \leqslant \, \myfrac{1}{n}\int
           \psi_{n}^{D(\ell)} \dd\lambda \,
           + \frac{W^{\pa}_{\!\ell}}{n} \, =
           \int \psi^{D(\ell)} \dd\lambda \,
           +\frac{W^{\pa}_{\!\ell}}{n} \ts .
\end{align*}
Now, letting first $n$ tend to infinity and then $\ell$ gives the
desired inequality.
\end{proof}

\begin{remark}
  The graph of the pressure function in
  Figure~\ref{fig:The-graph-of-pressureFunction-1} was generated using
  approximants of the form
\[
    p^{[n]}(t) \, = \, \myfrac{1}{n-2} \, \log 
    \sum_{j = 1}^{2^{n-2}} \Bigl( \myfrac{1}{2} \exp{
     \psi^{\pa}_{n} \bigl((2j -1) 2^{-n} \bigr) } \Bigr)^t ,
\]
which can be shown to converge to $p(t)$ by an explicit calculation.
Thus, we have reduced the number of evaluations on cylinders by a
factor of $4$ using symmetry, and we have avoided singular points. The
modification from $n$ to $n-2$ in the denominator and the additional
factor $\frac{1}{2}$ in front of the exponential ensure the
normalisations $p^{[n]}(0) = \log(2)$ and $p^{[n]}(1) = \log(2)$ for
all $n$, respectively.  It turns out that these approximants exhibit a
considerably faster convergence than approximants without the above
normalisations. Since the computational cost increases exponentially
in $n$, this is practically relevant. Note that this approach is
equivalent to the numerics presented in \cite{GL90}.  \exend
\end{remark}

\section{The remaining parts of the proof of 
   Theorem~\ref{thm:MAIN-MF-1}}\label{Sec:Proof-of-Thm-II}
 
 Proposition~\ref{prop:poinwise-conv} enables us to prove the
 remaining claims of Theorem~\ref{thm:MAIN-MF-1}.  We are left to show
 that the Birkhoff spectrum $b$ satisfies the following properties:
 \begin{itemize}\itemsep=2pt
 \item[(B1)] The function $\birk$ is concave on\/
   $(-\infty,\log(3/2)]$ and vanishes in the right boundary
   point of this interval.
 \item[(B2)] The level sets\/ $\cB(\alpha)$ are empty for
    $\alpha > \log(3/2)$.
 \item[(B3)] We have $b(\alpha)=1$ for $\alpha \leqslant - \log(2)$  and 
     $b(\alpha)<1$ for $\alpha > - \log(2)$.
 \end{itemize}
 \smallskip
 
 \noindent(B1): By Proposition~\ref{prop:poinwise-conv} the pressure
 function $p$ is convex and thus its Legendre transform $p^{*}$ is
 convex on its domain of definition given by
 $\left\{ x\in \RR\colon \sup_{t\in \RR} (xt-p(t)) <\infty \right\}$
 which is equal to $(-\infty,\log(3/2)]$. Further, as a
 consequence of \eqref{eq:prop-2.5a}, we have
\[
     -p^{*} \bigl(\log(3/2)\bigr) \,=\, \inf_{t\in\RR} 
     \bigl(p(t) - \log(3/2)\, t \bigr) \,=\, 0 \ts ,
\]
and it follows that  $b\bigl(\log \bigl( \frac{3}{2} \bigr) \bigr)=0$. 
\smallskip
  
\noindent(B2): The fact that the level sets are empty for
$\alpha>\log\bigl( \frac{3}{2}\bigr)$ follows from
Lemma~\ref{maxBeta}.  \smallskip
 
\noindent(B3): Clearly, $\birk(\alpha) \leqslant 1$ for all
$\alpha \in \RR$ by definition.  On the other hand, for
$\alpha\leqslant -\log(2)$, we have
\[
     \birk(\alpha) \, \geqslant \,
     \frac{-p^{*}(\alpha)}{\log(2)}
     \,=\,\frac{\inf_{t\geqslant 0} (p(t)-\alpha t)}{\log(2)}
     \,\geqslant\, \frac{\inf_{t\geqslant 0} 
     (p(0)-t\log(2)-\alpha t)}{\log(2)} \,=\,1 \ts ,
\]
where we have used the fact that $p$ is convex on $[0,\infty)$
together with Eqs.~\eqref{eq:prop-2.5aa} and \eqref{eq:prop-2.5c}.
This proves the first claim. The second is an immediate consequence of
the definition fo the Legendre transform in conjunction with
Eqs.~\eqref{eq:prop-2.5aa} and \eqref{eq:prop-2.5c}.

\section{Variational principle and equilibrium
    measure}\label{Sec:Eq-measure}

  So far, we have employed various concepts from the thermodynamic
  formalism and shown explicitly some results that, in general, were
  known to hold only for a more restrictive class of potentials. In
  particular, the relationship between the Birkhoff spectrum and the
  scaling or
  $f (\alpha)$-spectrum is of the same form as the one holding for
  equilibrium measures of H{\"o}lder continuous potentials
  \cite[Prop.~1]{PesinWeiss}.  It is the aim of this section to show
  that our results embed nicely into the known formalism in the sense
  that $\nu$ is indeed an {equilibrium measure} for the potential
  $\psi$. Let us expand a bit on this concept. To an upper
  semi-continuous function $\phi$ on the compact dynamical system
  $(X,T)$, we assign its \emph{variational pressure}
\[
    \cP^{\pa}_T(\phi) \, = \sup_{\mu \in \cM_T} 
    \bigl( h(\mu) + \mu(\phi) \bigr),
\]
where $\cM^{\pa}_T$ denotes the set of $T$-invariant probability
measures on $X$ and $h(\mu)$ is the metric entropy of $\mu$; compare
\cite{Keller}. A measure $\mu \in \cM^{\pa}_T$ that maximises the
above expression is called an \emph{equilibrium measure}.

In fact, our measure $\nu$ satisfies the even stronger property of
being a $g$-measure in the sense of Keane \cite{keane71}. We call a
non-negative function on $X$ a $g$-function if
\[
     \sum_{x \in T^{-1}(y)} \! g(x) \, = \, 1 
\]
holds for all $y \in \XX$. The associated operator $\varphi_g$ on the
space of bounded measurable functions $B(X)$ is defined via
\[
  \bigl( \varphi_g f \bigr)(y) \, = \!
  \sum_{x \in T^{-1} (y)} \! g(x) f(x)
\]
and, for any $\mu \in \cM^{\pa}_T$, its dual operation is given by
$\bigl( \varphi_g^* \ts \mu \bigr)(f) = \mu(\varphi^{\pa}_g f)$.  Any
Borel probability measure $\mu$ with $\varphi_g^* \ts \mu = \mu$
necessarily lies in $\cM^{\pa}_T$ and is called a $g$-measure.

It is a matter of direct calculation to verify that the function
$g(x) = \frac{1}{2} \bigl(1 - \cos(2 \pi x)\bigr)$ is indeed a
$g$-function for the dynamical system $(\XX,\sigma)$, as well as for
$(\TT ,T)$.  Similarly, we find that $\nu$ is a $g$-measure with this
choice of $g$ by a straight-forward computation.

The following characterisation of $g$-measures is classic.

\begin{fact}[{\cite[Thm.~1]{Ledrappier}}]\label{fact:ledrappier}
  Let\/ $g$ be a\/ $g$-function and\/ $\mu$ a probability measure on\/
  $(\XX,\sigma)$.  Then, the following characterisations are
  equivalent.
\begin{enumerate}\itemsep=2pt
  \item One has\/ $\varphi_g^* \ts \mu = \mu$.
  \item The measure\/ $\mu$ satisfies\/ $\mu \in \cM_{\sigma}$ and
  is an equilibrium measure for the potential\/ $\log(g)$, with\/
   $\, \cP_{\sigma}(\log(g))  =  h(\mu) + \mu(\log (g))  = 0 $.  \qed
\end{enumerate}
\end{fact}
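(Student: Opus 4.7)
The plan is to reduce the equivalence to a Jensen-inequality argument, using the Rokhlin entropy formula on the one-sided shift as the bridge between $g$-measures and equilibrium measures.

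First I would check that (1) implies $\mu \in \cM_\sigma$: for bounded measurable $f$, the standard transfer identity $\varphi_g(f\circ \sigma) = f\cdot \varphi_g\mathbf{1}$ combined with $\varphi_g \mathbf{1} = \mathbf{1}$ gives $\mu(f\circ \sigma) = \varphi^{*}_g \mu (f\circ \sigma) = \mu(f\cdot \varphi_g\mathbf{1}) = \mu(f)$. Next, let $\mathcal{A}$ be the one-sided generator into first-letter cylinders and, for $\mu'\in \cM_\sigma$, write $p^{\mu'}_a(y) := \mu'(\langle a\rangle\mid \sigma^{-1}\cB)(y)$ for the regular conditional probabilities. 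Applying the Rokhlin formula to rewrite $h(\mu')$ and the tower property to rewrite $\mu'(\log g)$ would yield the key identity
\[
   h(\mu') + \mu'(\log g) \, = \int \sum_{a\in\{0,1\}} p^{\mu'}_a(y) \, \log\frac{g(ay)}{p^{\mu'}_a(y)} \dd\mu'(y) .
\]

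Because $\sum_{a} g(ay) = 1$, Jensen's inequality applied pointwise shows that the integrand is $\leqslant 0$, with equality if and only if $p^{\mu'}_a(y) = g(ay)$ for $\mu'$-a.e.\ $y$, which is precisely the condition $\varphi_g^{*}\mu' = \mu'$. For the $\mu$ from (1), equality holds by definition and the identity then gives $h(\mu)+\mu(\log g) = 0$; combined with the universal upper bound this simultaneously yields $\cP_\sigma(\log g) = 0$ and the equilibrium property, establishing (2). Conversely, if $\mu\in\cM_\sigma$ realises $h(\mu)+\mu(\log g) = 0$, then the integrand must vanish $\mu$-a.e., forcing $p^{\mu}_a(y)=g(ay)$ on a set of full $\mu$-measure and hence $\varphi_g^{*}\mu = \mu$ via a standard monotone-class argument.

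The main obstacle is technical. For the Rokhlin formula and the Jensen step to be legitimately applied one needs $\log g \in L^{1}(\mu')$ together with finiteness of the conditional entropies; for general $g$-functions this is not automatic, and in our concrete setting $\log g = \psi - \log(2)$ has singularities at the dyadic points that require care, either by verifying the integrability directly (which is consistent with $h(\nu)$ being finite) or by working with truncations $g\vee \delta$ and passing to the limit $\delta\searrow 0$ via monotone convergence. A secondary subtlety is to reconcile the sup-over-invariant-measures definition of $\cP_\sigma$ used here with the partition limit of Eq.~\eqref{eq:def-pressure}, for which the SFT approximation developed in Section~\ref{Sec:Restricted-pressure} is the natural tool for unbounded potentials.
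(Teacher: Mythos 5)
The paper offers no proof of this Fact at all: it is stated as a quoted result with a pointer to Ledrappier's Th\'eor\`eme~1, and the $\qed$ marks it as imported. What you have written is, in essence, Ledrappier's own argument (also the one reproduced by Walters), so you are supplying the missing proof rather than diverging from one. Your outline is sound: the transfer identity $\varphi_g(f\circ\sigma)=f\cdot\varphi_g\mathbf{1}=f$ gives invariance from (1); the Rokhlin formula $h(\mu')=H_{\mu'}(\mathcal{A}\mid\sigma^{-1}\cB)$ for the one-sided generator, combined with the disintegration of $\mu'$ over the (at most two-point) fibres of $\sigma$, yields the stated identity; and the Jensen step, using $\sum_a g(ay)=\sum_a p^{\mu'}_a(y)=1$, gives both the universal bound $h(\mu')+\mu'(\log g)\leqslant 0$ and the equality criterion $p^{\mu'}_a(y)=g(ay)$ a.e., which is exactly $\varphi_g^*\mu'=\mu'$. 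Two refinements to your discussion of the obstacles. First, the integrability worry largely resolves itself: for the fixed point $\mu$ of $\varphi_g^*$ one has $p^{\mu}_a(y)=g(ay)$ a.e., whence $\mu(\log g)=-H_{\mu}(\mathcal{A}\mid\sigma^{-1}\cB)=-h(\mu)\geqslant-\log(2)>-\infty$ automatically, while for a competing $\mu'$ with $\mu'(\log g)=-\infty$ the inequality $h(\mu')+\mu'(\log g)\leqslant 0$ holds trivially under the usual convention, so the truncation $g\vee\delta$ is only needed to make the Jensen step airtight when $g$ vanishes on a set of positive conditional mass. Second, your ``secondary subtlety'' is not part of this Fact: the statement concerns only the variational pressure $\cP_{\sigma}$, and the identification with the partition-limit pressure of Eq.~\eqref{eq:def-pressure} is established separately in the paper (via $p(1)=\log(2)$ after Corollary~\ref{coro:TM-pressure}), so no appeal to Section~\ref{Sec:Restricted-pressure} is required here.
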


Note that $ \log (g) = \psi - \log(2)$. We therefore find
\[
  \cP_{\sigma}(\psi) \, = \sup_{\mu \in \cM^{\pa}_{\sigma}} 
  \bigl(h(\mu) + \mu( \log (g)) + \log(2)\bigr) \, = \,
   \cP_{\sigma}( \log (g)) + \log(2) \, = \, \log(2) \ts ,
\]
and a measure is an equilibrium measure for $\psi$ if and only if it
is an equilibrium measure for $\log (g)$ which, in turn, is true
 if and only if it is a
$g$-measure. In \cite[Section~2]{keane}, it was shown that the
$g$-measure on the dynamical system $(\TT,T)$ with
$T \colon x \mapsto 2x$ (mod $1$), is unique and strongly mixing if
$g=0$ at a single position in $[0,1)$.  For a detailed discussion on
the existence of $g$-measures for Riesz products, we refer to
\cite{fan-g-measures}.

\begin{coro}\label{coro:TM-pressure}
  The Thue{\ts}--Morse measure\/ $\nu$ is a strongly mixing
  $g$-measure. In particular, it is the unique equilibrium measure for
  the potential $\psi$ and
\[
    \cP^{\pa}_{\sigma}(\psi) \, = \, h(\nu) + \nu(\psi)
    \, = \, \log(2) 
\]
  is the corresponding variational pressure.   \qed
\end{coro}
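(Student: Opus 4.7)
The plan is to assemble the pieces laid out in the preceding discussion. The tasks are (i)~to verify that $g(x) = \tfrac{1}{2}(1-\cos(2\pi x))$ is a $g$-function for the doubling map and that $\nu$ is the associated $g$-measure, (ii)~to apply Fact~\ref{fact:ledrappier} to identify $\nu$ as an equilibrium measure for $\log(g)$ and then translate this to $\psi$, and (iii)~to extract uniqueness and strong mixing from the classical result cited above.

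For the first task, the $g$-function property is immediate: since $T^{-1}(y) = \{y/2,(y+1)/2\}$ on $\TT$, the identity $\cos(\pi(y+1)) = -\cos(\pi y)$ gives $g(y/2) + g((y+1)/2) = 1$. To see that $\varphi_g^{*} \ts \nu = \nu$, I would exploit the recursion $P^{\pa}_{N}(y) = 2 \ts g(y) \ts P^{\pa}_{N-1}(2y)$ that follows from Eq.~\eqref{eq:P-def}, split the integral of a continuous test function $f$ against $P^{\pa}_{N}$ over the intervals $\bigl[0,\tfrac{1}{2}\bigr]$ and $\bigl[\tfrac{1}{2},1\bigr]$, and substitute $u = 2y$, respectively $u = 2y-1$, using the $1$-periodicity of $P^{\pa}_{N-1}$. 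A short computation yields
\[
  \int_0^1 f(y) \ts P^{\pa}_{N}(y) \dd y \, = \int_0^1
  \bigl( g(u/2) f(u/2) + g((u+1)/2) f((u+1)/2) \bigr)
  P^{\pa}_{N-1}(u) \dd u ,
\]
whose right-hand side is precisely $\nu^{\pa}_{N-1}(\varphi^{\pa}_g f)$. Weak convergence $\nu^{\pa}_{N} \to \nu$ together with the continuity of $\varphi^{\pa}_g f$ then gives $\nu(f) = \nu(\varphi^{\pa}_g f)$ for every continuous $f$, hence $\varphi_g^{*} \ts \nu = \nu$.

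With these two checks in place, Fact~\ref{fact:ledrappier} yields $\nu \in \cM^{\pa}_{\sigma}$ together with the equilibrium identity $\cP^{\pa}_{\sigma}(\log(g)) = h(\nu) + \nu(\log(g)) = 0$. Since $\psi = \log(g) + \log(2)$ differs from $\log(g)$ by a constant, every $\mu \in \cM^{\pa}_{\sigma}$ satisfies $h(\mu) + \mu(\psi) = h(\mu) + \mu(\log(g)) + \log(2)$, so the two variational problems share the same set of maximisers and $\cP^{\pa}_{\sigma}(\psi) = \cP^{\pa}_{\sigma}(\log(g)) + \log(2) = \log(2)$. This gives the claimed variational pressure and identifies $\nu$ as an equilibrium measure for $\psi$. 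For uniqueness and strong mixing, I would invoke \cite[Sec.~2]{keane}, where it is shown that on $(\TT,T)$ a $g$-measure is unique and strongly mixing as soon as $g$ vanishes at only one point of $[0,1)$; here $g$ vanishes only at $x = 0$, so this hypothesis applies.

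No individual step looks technically serious once the Riesz-product recursion is in hand. The only conceptual obstacle is to be sure that the singularity of $\psi$ at the origin does not render $h(\nu) + \nu(\psi)$ ill-defined. This is settled a posteriori by the equality $h(\nu) + \nu(\log(g)) = 0$ of finite numbers provided by Fact~\ref{fact:ledrappier}, which forces $\nu(\psi) = \log(2) - h(\nu)$ to be finite despite the $-\infty$ singularity of $\psi$ at $0$.
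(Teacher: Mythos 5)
Your proposal is correct and follows essentially the same route as the paper, which assembles the corollary from the preceding discussion: the $g$-function identity, the fixed-point property $\varphi_g^{*}\ts\nu=\nu$, Ledrappier's characterisation, the constant shift $\psi=\log(g)+\log(2)$, and Keane's uniqueness/mixing result. The only difference is that you spell out the two computations the paper dismisses as ``direct'' --- in particular the verification of $\varphi_g^{*}\ts\nu=\nu$ via the recursion $P^{\pa}_{N}(y)=2\ts g(y)\ts P^{\pa}_{N-1}(2y)$ and weak convergence --- and these details are correct.
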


In fact, by using Eq.~\eqref{eq:prop-2.5aaa}, it turns out that the
notions of variational pressure and topological pressure of $\psi$
coincide. This is well known for H\"{o}lder continuous potentials
\cite[Thm.~1.22]{Bow}.  The relation $h(\nu) + \nu(\psi) = \log (2)$
was also established in \cite{Q1}, using slightly different
techniques.

\begin{remark}
  The uniqueness of $\nu$ as an equilibrium measure can be obtained by
  generalising the Ruelle--Perron--Frobenius theorem \cite{ruelle} to
  our potential $\psi$. This provides additional information about the
  operator $\varphi^{\pa}_g$, such as the fact that $\mathbf{1}$ is
  the (up to normalisation) unique eigenfunction to the eigenvalue $1$
  and the fact that the remainder of the spectrum is contained in a
  disk around zero of radius strictly smaller than $1$.  \exend
\end{remark}

\begin{remark}
  It is worth noticing that Corollary~\ref{coro:TM-pressure} permits
  us to give a closed form for the metric entropy of $\nu$ in terms of
  the autocorrelation coefficients; compare \cite[Eqs.~(19) and
  (20)]{Zaks}. There, the authors established the relation
\begin{equation}\label{Eq:expression-for-energy}
    \lim_{n \rightarrow \infty} \nu_n(\psi) \, = \, - \log(2) 
       - 2 \sum_{j =1}^{\infty} \frac{\eta(j)}{j} \ts ,
\end{equation}
where $\eta(j) =  \lim_{n \rightarrow \infty} \frac{1}{n}
\sum_{k = 1}^{n} v^{\pa}_k v^{\pa}_{k+j}$ are the autocorrelation
coefficients, with $(v^{\pa}_{k})^{\pa}_{k\in \NN}$ denoting the one-sided
Thue{\ts}--Morse sequence in $\{ \pm 1\}$ that starts with $1$; see
\cite{TAO} for background. Note that the relation
\[
    \nu(\psi)  \, =  \lim_{n \rightarrow \infty} \nu^{\pa}_{n} (\psi)
\]
can be checked easily. This is due to the fact that the singularities
in $\psi$ are relatively `weak' and the measures $\nu^{\pa}_n$ assign
rapidly decreasing values to boundary intervals of $[0,1]$.

Due to the well-known renormalisation equations for $\eta$, 
compare \cite[Sec.~10.1]{TAO}, the series
in Eq.~\eqref{Eq:expression-for-energy} can be numerically evaluated
with high precision. Following the procedure presented in \cite{Zaks},
we find via Corollary~\ref{coro:TM-pressure} that
\[
    h(\nu) \, = \, 2\log(2) + 2 \sum_{j =1}^{\infty} \frac{\eta(j)}{j} 
    \, \approx \, 0.506{\,}383{\,}995{\,}447{\,}319{\,}674{\,}30 \ts ,
\]
with a precision of $20$ correct digits. Note that this value is
related to the information dimension $D_1$, as calculated in
\cite{Zaks} and \cite{GL90}, via $h(\nu) = \log(2) D_1$, so
$D_1 \approx 0.730$. Our numerical value is a significant improvement
over the lower bound for the entropy derived in \cite{PK}.  It is
precise enough to affirmatively answer the question from
\cite[Sec.~4.4.1]{Q1} whether the (information) dimension of $\nu$ is
strictly larger than its \emph{energy exponent}
$e(\nu) := 1- \log_2(\kappa)$, with
$\kappa = \bigl(1 + \sqrt{17} \,\bigr)/4$, which gives
$e(\nu) \approx 0.643$.  \exend
\end{remark}

\section*{Acknowledgements} 

MK acknowledges support from the German Research Foundation (DFG),
through grant KE 1440/3-1, and would like to thank the Mittag--Leffler
institute for its kind hospitality during the research program
\textsl{Fractal Geometry and Dynamics}, where valuable discussions
with F.~Ekstr\"om and J.~Schmeling took place.  This work was also
supported by the Research Centre of Mathematical Modelling (RCM$^2$)
of Bielefeld University (for TS), and by the DFG Collaborative
Research Centre 1283 at Bielefeld (for MB and PG).

\end{document}